\newtheorem{prop}{Proposition}[section]
\newtheorem{theorem}{Theorem}[section]
\newtheorem{definition}{Definition}[section]
\newtheorem{corollary}{Corollary}[section]
\newtheorem{example}{Example}[section]
\newtheorem{lemma}{Lemma}[section]
\newtheorem*{remark}{Remark}
\title{Quandles of cyclic type with several fixed points}
\author{Ant\'onio Lages$\sp{1}$, Pedro Lopes$\sp{1, 2}$\\
$\sp{1}$Department of Mathematics\\
$\sp{2}$Center for Mathematical Analysis, Geometry and Dynamical Systems\\
Instituto Superior T\'ecnico\\
University of Lisbon\\
Av. Rovisco Pais\\
1049-001 Lisbon\\
Portugal\\
        \texttt{antonio.lages@tecnico.ulisboa.pt   pelopes@math.tecnico.ulisboa.pt}\\}
\begin{document}

\maketitle

\begin{abstract}
A quandle of cyclic type of order $n$ with $f\geq 2$ fixed points is such that each of its permutations splits into $f$ cycles of length $1$ and one cycle of length $n-f$. In this article we prove that there is only one such connected quandle, up to isomorphism. This is a quandle of order $6$ and $2$ fixed points, known in the literature as octahedron quandle. We prove also that, for each $f\geq 2$, the non-connected versions of these quandles only occur for orders $n$ in the range $f+2 \leq n \leq 2f$ and that, for each $f>1$, there is only one such quandle of order $2f$ with $f$ fixed points, up to isomorphism.  Still in the range $f+2 \leq n \leq 2f$, we present sufficient conditions for the existence of such quandles, writing down their permutations; we also show how to obtain new quandles form old ones, leaning on the notion of common fixed point.  \end{abstract}

\bigbreak

Keywords: Quandles, permutations, fixed points, associate indices, associate permutations, common fixed points, octahedron quandle, dihedral quandles

\bigbreak

Mathematics Subject Classification 2010: 20N02

\section{Introduction}

\subsection{Quandles.}

The algebraic structure known as quandle appeared first in the literature in 1982, due to Joyce \cite{Joyce} and Matveev \cite{Matveev}, independently (see also \cite{Dehornoy} and \cite{FennRourke}). It was designed to constitute the algebraic counterpart of the Reidemeister moves \cite{Kauffman}. As such it turned out to be  an important tool in telling knots apart \cite{DionisioLopes}, \cite{BojarczukLopes}, \cite{carteretal}. Algebraists also find it interesting in the domain of Hopf algebras \cite{AndruskiewitschGrana}. It thus seem relevant to study the structure of quandles. In the current article we take another step in this direction by investigating and almost fully classifying a family of quandles. In \cite{LopesRoseman} quandles are regarded as sequences of permutations and based on the features of permutations conclusions are drawn. In particular, in \cite{LopesRoseman} quandles of the following sort are looked into. Given a positive integer $n$ we consider a quandle of order $n$, each of whose permutations split into a cycle of length $n-1$ and a fixed point; this fixed point complies with one of the quandles axioms - this is all detailed below. These quandles were subsequently called ``quandles of cyclic type''. They were also studied in \cite{KamadaTamaruWada}  and \cite{Vendramin}. In this article we work in the spirit of \cite{LopesRoseman} i.e., quandles as sequences of permutations, and we look into the classification of a generalization of ``quandles of cyclic type'' which we call ``quandles of cyclic type with several fixed points''. Details are supplied below in the text.

\subsection{Basic Definitions and Results}

The algebraic structure known as $\emph{quandle}$, introduced independently in \cite{Joyce} and \cite{Matveev}, is defined as follows.\par

\begin{definition}\label{def:quandle}
Let $Q$ be a set equipped with a binary operation denoted by $*$. The pair $(Q,*)$ is said to be a $\emph{quandle}$ if, for each $a, b, c\in Q$,
\begin{enumerate}
\item $a*a=a$ (idempotency);
\item $\exists ! x\in Q: x*b=a$ (right-invertibility);
\item $(a*b)*c=(a*c)*(b*c)$ (self-distributivity).
\end{enumerate}
\end{definition}

We present a few examples of quandles.

\begin{example}\textcolor{white}{.}

\begin{itemize}
    \item Let $G$ be a group and let $*$ be the binary operation on $G$ given by $a*b=bab^{-1}$, for every $a,b\in G$, where the juxtaposition on the right-hand side denotes group multiplication. Then, the pair $(G,*)$ is a quandle;
    \item For each $n\geq 2$, $(R_n,*)$ denotes the quandle whose underlying set is $\mathbb{Z}/n\mathbb{Z}$ and whose operation is $a*b=2b-a \mod n$. This is called the dihedral quandle (of order $n$);
    \item For each $n\geq 1$, $(T_n,*)$ denotes the quandle whose underlying set is $\{1,\dots, n\}$ and whose operation is $i*j=i$, $\forall i,j\in \{1,\dots, n\}$. This is called the trivial quandle (of order $n$);
    \item $Q_6^2$ is the quandle whose multiplication table is displayed in Table \ref{table:1}.

\end{itemize}
\end{example}

\begin{center}
\begin{tabular}{|c|c c c c c c|}
 \hline
 $\ast$ & 1 & 2 & 3 & 4 & 5 & 6\\
 \hline
 1 & 1 & 5 & 1 & 6 & 4 & 2\\
 2 & 6 & 2 & 5 & 2 & 1 & 3\\
 3 & 3 & 6 & 3 & 5 & 2 & 4\\
 4 & 5 & 4 & 6 & 4 & 3 & 1\\
 5 & 2 & 3 & 4 & 1 & 5 & 5\\
 6 & 4 & 1 & 2 & 3 & 6 & 6\\
 \hline
\end{tabular}
\captionof{table}{$Q_6^2$ multiplication table.}\label{table:1}
\end{center}

An alternative description of the structure of a quandle is the one given in the following theorem (\cite{Brieskorn}, \cite{FennRourke}). \par

\begin{theorem}\label{thm:equivdef}
Let $Q=\{1, 2, \dots, n\}$. Suppose a permutation $\mu_i$ from $S_n$, the symmetric group on $Q$, is assigned to each $i\in Q$. Then, the expression $j\ast i:=\mu_i(j), \forall  j \in Q$, yields a quandle structure if and only if $\mu_{\mu_i(j)}= \mu_i\mu_j\mu_i^{-1}$ and $\mu_i(i)=i$, $\forall i, j\in Q$. This quandle structure is uniquely determined by the set of $n$ permutations, $\{ \mu_1, \dots , \mu_n   \}$.
\end{theorem}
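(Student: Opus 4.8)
The plan is to translate each of the three quandle axioms of Definition~\ref{def:quandle} into an equivalent statement about the family $\{\mu_1,\dots,\mu_n\}$, using throughout the defining relation $j\ast i=\mu_i(j)$. First I would dispatch idempotency: $a\ast a=a$ for all $a$ says exactly $\mu_a(a)=a$ for all $a$, which is one of the two asserted conditions. Next, right-invertibility: the requirement that for each $a,b$ there be a unique $x$ with $x\ast b=a$, i.e.\ $\mu_b(x)=a$, is precisely the statement that each map $\mu_b\colon Q\to Q$ is a bijection; since we have assumed at the outset that each $\mu_i$ lies in $S_n$, this axiom holds automatically and imposes no further constraint.

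The substantive step is self-distributivity. I would compute both sides of $(a\ast b)\ast c=(a\ast c)\ast(b\ast c)$ in terms of the permutations: the left-hand side equals $\mu_c\bigl(\mu_b(a)\bigr)=(\mu_c\mu_b)(a)$, while the right-hand side equals $\mu_{\,b\ast c}\bigl(a\ast c\bigr)=\mu_{\mu_c(b)}\bigl(\mu_c(a)\bigr)=(\mu_{\mu_c(b)}\,\mu_c)(a)$. Hence self-distributivity holds for all $a,b,c$ if and only if $\mu_c\mu_b=\mu_{\mu_c(b)}\,\mu_c$ for all $b,c$, equivalently $\mu_{\mu_c(b)}=\mu_c\,\mu_b\,\mu_c^{-1}$ for all $b,c$; relabelling $c\mapsto i$ and $b\mapsto j$ gives the stated relation $\mu_{\mu_i(j)}=\mu_i\mu_j\mu_i^{-1}$. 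Combining the three translations yields the claimed equivalence.

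Finally, for the uniqueness clause I would simply observe that the rule $j\ast i:=\mu_i(j)$ determines the value of $a\ast b$ for every pair $(a,b)$ directly from the data $\{\mu_1,\dots,\mu_n\}$, so distinct quandle structures cannot arise from the same family of permutations; conversely, any quandle operation $\ast$ on $Q$ recovers its permutations by $\mu_i(j):=j\ast i$, and then idempotency gives $\mu_i(i)=i$ while self-distributivity gives the conjugation relation, so the correspondence is a genuine bijection. I do not anticipate a real obstacle here: the only point requiring care is bookkeeping the order of composition and keeping straight which argument of $\ast$ is the one ``acting'' as a permutation, so that the conjugation appears on the correct side.
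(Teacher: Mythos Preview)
Your proof is correct and follows essentially the same approach as the paper: translate each quandle axiom into a statement about the permutations, with idempotency giving $\mu_i(i)=i$, right-invertibility being automatic since each $\mu_i\in S_n$, and self-distributivity unwinding to the conjugation relation $\mu_{\mu_i(j)}=\mu_i\mu_j\mu_i^{-1}$. Your treatment is in fact slightly cleaner, since you pass directly from the pointwise identity $(\mu_c\mu_b)(a)=(\mu_{\mu_c(b)}\mu_c)(a)$ for all $a$ to the equality of maps, whereas the paper somewhat redundantly invokes right-invertibility at that step.
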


\begin{proof}
The proof can be found in \cite{Brieskorn}. For the sake of completeness, we repeat it here.\par
Suppose $(Q,*)$ is a quandle. Then, $\forall i\in Q$, we have that $i*i=i$, which is equivalent to saying that $\mu_i(i)=i, \forall i\in Q$. Moreover, given indices $i,j,k\in Q$, $(i*j)*k=(i*k)*(j*k)$ is equivalent to $\mu_k(\mu_j(i))=\mu_{\mu_k(j)}(\mu_k(i))$, which is equivalent to saying, by the right-invertibility axiom in the definition of quandle, that $\mu_{\mu_k(j)}=\mu_k\mu_j\mu_k^{-1}$ , and hence the result follows in one direction. Now we prove the converse. Indeed, the three axioms in Definition \ref{def:quandle} are satisfied. Both idempotency and self-distributivity are clear from the previous calculations. To check the right-invertibility, let $i,j\in Q$. Since $\mu_j$ is a bijection, there exists an unique $x\in Q$ such that $i=\mu_j(x)=:x*j$, and so the result follows.
\end{proof}

In this article, we study the properties of each quandle by analyzing the structure uniquely determined by its set of $n$ permutations. We also only address finite quandles. If such a quandle has order $n$, we take the underlying set to be $\{  1, 2, \dots , n \}$, without loss of generality.

\begin{definition}\label{def:quandles-as-perms}
Given a quandle $(Q,*)$, its permutations are the $\mu_i$'s referred to in the statement of Theorem \ref{thm:equivdef}, $\forall i\in Q$. Unless otherwise stated in the sequel, a $\mu_i$ always refers to a permutation from the quandle under discussion. We also write $(Q, \mu)$ to denote the same quandle from the point-of-view of permutations.
\end{definition}

\begin{definition}\label{def:quandle-iso}
Let $(Q, \ast)$ (respect., $(Q, \mu)$) and $(Q', \ast')$ (respect., $(Q', \mu')$) be two quandles. A bijection $\alpha : Q \longrightarrow Q'$ is a quandle isomorphism between these two quandles if, by definition, for any $i, j \in Q$, $\alpha (i\ast j) = \alpha(i) \ast' \alpha(j)$.
\end{definition}

\begin{prop} Keeping the notation and terminology of Defintion \ref{def:quandles-as-perms}, $\alpha$ is a quandle isomorphism if and only if, for any $i\in Q$, $\mu'_{\alpha (i)} = \alpha \mu_i \alpha^{-1}$.
\end{prop}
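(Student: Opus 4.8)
The plan is to unwind both definitions and observe that the two conditions are literal restatements of one another once the quandle operations are expressed through their permutations. Recall from Theorem \ref{thm:equivdef} that in $(Q,\mu)$ we have $i\ast j = \mu_j(i)$ and, likewise, in $(Q',\mu')$ we have $i'\ast' j' = \mu'_{j'}(i')$.

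First I would rewrite the isomorphism condition $\alpha(i\ast j) = \alpha(i)\ast'\alpha(j)$, asked to hold for all $i,j\in Q$, as
$\alpha\big(\mu_j(i)\big) = \mu'_{\alpha(j)}\big(\alpha(i)\big)$ for all $i,j\in Q$. Fixing $j$ and letting $i$ range over $Q$, this says precisely that the two maps $\alpha\circ\mu_j$ and $\mu'_{\alpha(j)}\circ\alpha$ from $Q$ to $Q'$ agree pointwise, i.e. $\alpha\mu_j = \mu'_{\alpha(j)}\alpha$; since $\alpha$ is a bijection this is in turn equivalent to $\mu'_{\alpha(j)} = \alpha\mu_j\alpha^{-1}$. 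Conversely, if $\mu'_{\alpha(j)} = \alpha\mu_j\alpha^{-1}$ holds for every $j$, then evaluating both sides at $\alpha(i)$ and using $\alpha^{-1}(\alpha(i)) = i$ gives back $\alpha(\mu_j(i)) = \mu'_{\alpha(j)}(\alpha(i))$, that is, $\alpha(i\ast j) = \alpha(i)\ast'\alpha(j)$. Since $\alpha$ is onto, quantifying over $j\in Q$ and over $\alpha(j)\in Q'$ amounts to the same family of identities, so the condition may be phrased over the index set $Q$ exactly as in the statement.

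The argument is essentially bookkeeping and I do not expect a genuine obstacle; the only point requiring care is to keep straight which argument of $\ast$ becomes the subscript of $\mu$ and which becomes the argument of $\mu$, and to keep the universal quantifier on $i$ separate from the one on $j$ when passing between ``equality of values'' and ``equality of permutations''. No idempotency or self-distributivity input is needed here: the proof uses only that $\ast$ and $\ast'$ are recovered from the permutation families and that $\alpha$ is invertible.
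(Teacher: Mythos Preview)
Your proof is correct and follows exactly the same route as the paper's own argument: both simply rewrite $\alpha(i\ast j)=\alpha(i)\ast'\alpha(j)$ as $\alpha\mu_j=\mu'_{\alpha(j)}\alpha$ via the permutation description of $\ast$ and $\ast'$, and then compose with $\alpha^{-1}$. The paper presents this as a single chain of equivalences, while you spell out the quantifier bookkeeping a bit more explicitly, but there is no substantive difference.
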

\begin{proof}
For any $i, j\in Q$, $$\alpha (j\ast i) = \alpha (j) \ast' \alpha (i) \Longleftrightarrow \alpha (\mu_i (j)) = \mu'_{\alpha (i)}(\alpha (j))  \Longleftrightarrow \alpha \, \mu_i  = \mu'_{\alpha (i)}\, \alpha  \Longleftrightarrow \mu'_{\alpha (i)} = \alpha \, \mu_i \, \alpha^{-1}   $$
\end{proof}

\subsection{Definition of Quandle of Cyclic Type with Several Fixed Points}

As stated in Theorem \ref{thm:equivdef}, a quandle of order $n$ is uniquely determined by a set of $n$ permutations, where each one of these permutations can be decomposed into a set of disjoint cycles. The lengths of these cycles define the $\emph{pattern}$ of each permutation.

\begin{definition}
The $\emph{pattern}$ of a permutation is the list of the lengths of the disjoint cycles making up the permutation.
\end{definition}

We can collect the information relative to the patterns of the $n$ permutations defining a quandle of order $n$ in order to define its $\emph{profile}$.

\begin{definition}
The $\emph{profile}$ of a quandle of order $n$ is the list of the patterns of the $n$ permutations defining the quandle.
\end{definition}

We now introduce the notion of $\emph{connected quandle}$ in order to state an important proposition.

\begin{definition}
A finite quandle $(Q,*)$ is said to be $\emph{connected}$ if, $\,\forall \,i,j\in Q,\,\exists\,k_1, k_2, \dots, k_n\in Q:$ \[j=(\cdots((i*k_1)*k_2)*\cdots*k_n)=\mu_{k_n}\circ\cdots\circ\mu_{k_2}\circ\mu_{k_1}(i).\]
\end{definition}

\begin{prop}\label{prop:connec}
Connected finite quandles have constant profiles.
\end{prop}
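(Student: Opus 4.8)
The plan is to show that if $(Q,*)$ is connected then all the permutations $\mu_i$ have the same pattern, i.e.\ the same cycle type. The key observation is the conjugacy relation from Theorem~\ref{thm:equivdef}: for all $i,j\in Q$ we have $\mu_{\mu_i(j)}=\mu_i\,\mu_j\,\mu_i^{-1}$. Since conjugate permutations in $S_n$ have the same cycle type, this says that $\mu_j$ and $\mu_{\mu_i(j)}$ always have the same pattern. So the first step is to record that the pattern of $\mu_j$ is invariant under replacing $j$ by $\mu_i(j)$ for any $i$.

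Next I would iterate this. Applying the relation repeatedly, for any $j$ and any sequence $k_1,k_2,\dots,k_m\in Q$, the permutation $\mu_{(\mu_{k_m}\circ\cdots\circ\mu_{k_1})(j)}$ is a conjugate of $\mu_j$ — indeed, by induction on $m$, $\mu_{(\mu_{k_m}\circ\cdots\circ\mu_{k_1})(j)}=\big(\mu_{k_m}\cdots\mu_{k_1}\big)\,\mu_j\,\big(\mu_{k_m}\cdots\mu_{k_1}\big)^{-1}$, using the single-step relation at each stage with $i=k_{t}$ and the current index $(\mu_{k_{t-1}}\circ\cdots\circ\mu_{k_1})(j)$ in place of $j$. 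Hence $\mu_j$ and $\mu_{j'}$ have the same pattern whenever $j'$ lies in the orbit of $j$ under the group generated by all the $\mu_k$'s, that is, whenever $j' = (\mu_{k_m}\circ\cdots\circ\mu_{k_1})(j)$ for some choice of indices.

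Finally I would invoke connectedness: by definition, for any $i,j\in Q$ there exist $k_1,\dots,k_n\in Q$ with $j=\mu_{k_n}\circ\cdots\circ\mu_{k_1}(i)$, so $j$ is in the orbit of $i$ under the inner automorphism group. Combining this with the previous step, $\mu_i$ and $\mu_j$ have the same pattern for all $i,j\in Q$. Therefore the list of patterns of the $n$ permutations consists of a single repeated pattern, which is exactly the statement that the profile is constant.

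The only mildly delicate point — the ``main obstacle,'' such as it is — is the bookkeeping in the induction of the second step: one must apply the relation $\mu_{\mu_i(j)}=\mu_i\mu_j\mu_i^{-1}$ with the right index substituted at each stage (the already-accumulated image of $j$, not $j$ itself) so that the conjugating elements compose correctly into $\mu_{k_m}\cdots\mu_{k_1}$. Once that is set up cleanly, the rest is immediate from the fact that conjugation preserves cycle type together with the definition of connectedness.
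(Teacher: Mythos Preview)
Your proof is correct and follows essentially the same approach as the paper's: both use the relation $\mu_{\mu_i(j)}=\mu_i\mu_j\mu_i^{-1}$ from Theorem~\ref{thm:equivdef}, iterate it along a chain $k_1,\dots,k_m$ to obtain $\mu_j=(\mu_{k_m}\cdots\mu_{k_1})\mu_i(\mu_{k_m}\cdots\mu_{k_1})^{-1}$, and conclude from the fact that conjugate permutations share cycle type. The only difference is that you spell out the induction on $m$ explicitly, whereas the paper states the iterated conjugacy in one line.
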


\begin{proof}
Let $(Q,*)$ be a connected quandle. Then, given $i,j\in Q,\,\exists\,k_1, k_2, \dots, k_n\in Q:$ \[j=(\cdots((i*k_1)*k_2)*\cdots*k_n)=\mu_{k_n}\circ\cdots\circ\mu_{k_2}\circ\mu_{k_1}(i)\Rightarrow\mu_j=\mu_{k_n}\cdots\mu_{k_2}\mu_{k_1}\mu_i\mu_{k_1}^{-1}\mu_{k_2}^{-1}\cdots\mu_{k_n}^{-1},\] by Theorem \ref{thm:equivdef}, and since conjugate permutations have the same pattern, the result follows.
\end{proof}

\begin{remark}
Note that  quandles with constant profile do not have to be connected. For example, the trivial quandle of order $n$, $(T_n, *)$, has constant profile and it is not connected. The same for dihedral quandles of even order.
\end{remark}

Finally, we introduce the key notion of $\emph{quandles of cyclic type with several fixed points}$.

\begin{definition}
Given $n, f\in\mathbb{N}$, $n-2\geq f>1$, a $\emph{quandle of  cyclic type of order}$ $n$ $\emph{with}$ $f$ $\emph{fixed points}$  is a quandle of order $n$ with constant profile given by $$\bigg\{ \underbrace{\{\underbrace{1,\dots,1}_{f},n-f\},\dots,\{\underbrace{1,\dots,1}_{f},n-f\}}_n \bigg\} .$$ When there is no need to refer to its order or to its number of fixed points we refer to each of these quandles as $\emph{quandle of  cyclic type with several fixed points}$.
\end{definition}

The previous definition means that each one of the $n$ permutations defining a quandle of cyclic type of order $n$ with $f$ fixed points splits into the following types and numbers of disjoint cycles. One cycle of length $n-f(>1)$ and $f$ cycles of length $1$.\par

In passing, we note, by inspection of Table \ref{table:1}, that $Q_6^2$ is a connected quandle. Hence, by Proposition \ref{prop:connec}, it has a constant profile. But more than that, $Q_6^2$ is, in fact, a quandle of cyclic type of order $6$ with $2$ fixed points. In this article, we show that this is the only connected quandle of cyclic type with several fixed points.\par

In the sequel, we use the following notation.

\begin{definition}
Let $Q$ be a quandle of cyclic type with several fixed points of order $n$. As per Theorem 2.1.1, its $n$ permutations are denoted $\mu_i$, $i\in\{1, \dots, n\}$. In particular, $\mu_i(i) = i, \forall i\in\{1, \dots, n\}$. The set of fixed points of $\mu_i$ is denoted $F_i$, $i\in\{1, \dots, n\}$. The set of points in the non-singular cycle of $\mu_i$ is denoted $C_i$, $i\in\{1, \dots, n\}$. We note that $C_i\cap F_i=\emptyset$ and $C_i\cup F_i=\{1, \dots, n\}$, $\forall i\in\{1, \dots, n\}$.
\end{definition}

\subsection{Statement of the Results of this Article}\label{subsect:results}

\begin{theorem}\label{thm:main1}
Let $n$ and $f$ be positive integers such that $n > f + 1 > 2$, and assume $Q$ is a quandle of cyclic type of order $n$ with $f$ fixed points. Then, the following hold.
\begin{enumerate}
\item Assume $n > 2f$.
\begin{enumerate}
\item Then, $Q$ is connected.
\item  Moreover, there is only one such quandle, up to isomorphism. It occurs for  $n=6$ and $f=2$. This quandle  is  $Q_6^2$ (see Table \ref{table:1}, above).
\end{enumerate}
\item Assume $n \leq  2f$.
\begin{enumerate}
\item Then, $Q$ is not connected.
\item  If $n=2f$, there is only one such quandle, up to isomorphism. Its permutations are $$\mu_1 = \mu_2 = \cdots = \mu_f = \bigg( f+1 \quad f+2 \quad \cdots \quad 2f \bigg)$$ $$\mu_{f+1} = \mu_{f+2} = \cdots = \mu_{2f} = \bigg( 1 \quad 2 \quad \cdots \quad f \bigg) .$$
\end{enumerate}
\end{enumerate}
\end{theorem}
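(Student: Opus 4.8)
The plan is to extract a single structural dichotomy from Theorem~\ref{thm:equivdef} and then run the two ranges $n>2f$ and $n\le 2f$ through it. The key elementary facts are: (i) if $j\in F_i$ then $\mu_i(j)=j$, so $\mu_j=\mu_{\mu_i(j)}=\mu_i\mu_j\mu_i^{-1}$, i.e.\ $\mu_i$ and $\mu_j$ commute; (ii) consequently $\mu_i$ preserves both $F_j$ and its complement $C_j$; and (iii) since $C_j$ is a single cycle of length $n-f$ of $\mu_j$ while $\mu_i$ is one cycle of length $n-f$ (on $C_i$) together with $f$ fixed points, $\mu_i$-invariance of $C_j$ leaves only two options — either $C_i=C_j$ (equivalently $F_i=F_j$) or $C_i\cap C_j=\varnothing$, and the second forces $2(n-f)=|C_i|+|C_j|\le n$, i.e.\ $n\le 2f$. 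I will call this the dichotomy, and I will also use repeatedly that $F_{\mu_i(k)}=\mu_i(F_k)$ (fixed sets of conjugates are images of one another), writing $G=\langle\mu_1,\dots,\mu_n\rangle$ so that ``$Q$ connected'' means ``$G$ transitive''.

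Part~1 ($n>2f$): here the second option of the dichotomy is excluded, so $F_i=F_j$ whenever $j\in F_i$; hence ``$j\in F_i$'' is an equivalence relation, its classes $B_1,\dots,B_m$ all have $f$ elements, $f\mid n$, and $m=n/f\ge 3$. By $F_{\mu_i(k)}=\mu_i(F_k)$ every $\mu_i$ permutes the blocks; for $i\in B_\ell$ it fixes $B_\ell$ pointwise and, as $C_i=\bigcup_{\ell'\ne\ell}B_{\ell'}$ is one $\mu_i$-cycle, induces a single $(m-1)$-cycle on the rest. One checks the block partition is a quandle congruence, so the quotient $\overline Q$ is a quandle of cyclic type of order $m\ge 3$ with a single fixed point; a short direct argument shows every such quandle is connected (from $i$ apply $\overline\mu_j$, $j\ne i$, to reach all blocks but $j$, then apply $\overline\mu_i$ from a third block to reach all blocks but $i$), so $G$ is transitive on blocks. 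Since for $i\in B_\ell$ the element $\mu_i^{m-1}\in G$ is the $(m-1)$-st power of a cycle of length $(m-1)f$, hence a product of $m-1$ disjoint $f$-cycles, one on each $B_{\ell'}$ ($\ell'\ne\ell$), $G$ is transitive inside each block too and $Q$ is connected — this gives~1(a). For~1(b) I would then exploit two further facts: within a block $B_\ell$ the $f$ permutations are pairwise-commuting cycles of length $n-f$ on the common support $C_i$, hence (the centralizer of a full cycle being cyclic) all lie in one cyclic group $\langle\sigma_\ell\rangle$; and the relations $\mu_{\mu_i(j)}=\mu_i\mu_j\mu_i^{-1}$ with $i,j$ in distinct blocks produce compatibility equations among the $\sigma_\ell$ and their exponents. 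Confronting these with the known description of connected quandles of cyclic type of order $m$ (such quandles force $m$ to be a prime power and are affine over $\mathbb F_m$; cf.\ \cite{KamadaTamaruWada}, \cite{Vendramin}) should eliminate every $(m,f)$ other than $(3,2)$, and in that case the constraints pin down the quandle up to relabelling; a direct check identifies it with $Q_6^2$ of Table~\ref{table:1}.

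Part~2: first take $n=2f$ (this is~2(b)). The second option of the dichotomy cannot occur: if $C_i\cap C_j=\varnothing$ then $|C_i|+|C_j|=n$, so $C_i$ and $C_j$ partition $\{1,\dots,n\}$ and $C_j=F_i$; but $\mu_j(j)=j$ gives $j\notin C_j$, contradicting $j\in F_i=C_j$. Hence $F_i=F_j$ whenever $j\in F_i$, and since $n=2f$ this yields exactly two blocks $A,B$ with $C_i=B$ for $i\in A$ and $C_j=A$ for $j\in B$. For $i,i'\in A$, $\mu_i$ and $\mu_{i'}$ are commuting $f$-cycles on $B$, so $\mu_{i'}\in\langle\mu_i\rangle$; for $j\in B$, $\mu_j$ is an $f$-cycle on $A$ whose support is disjoint from that of $\mu_i$, so $\mu_i,\mu_j$ commute and $\mu_{\mu_j(i)}=\mu_j\mu_i\mu_j^{-1}=\mu_i$, and running $i$ along the $f$-cycle $\mu_j|_A$ forces all $\mu_{i'}$ ($i'\in A$) to coincide — symmetrically all $\mu_j$ ($j\in B$) coincide. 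So $Q$ is carried by one $f$-cycle on $A$ and one on $B$; relabelling $A=\{1,\dots,f\}$ and $B=\{f+1,\dots,2f\}$ to make both cycles standard produces exactly the displayed permutations, and conversely those do define a quandle since the two cycles have disjoint supports. For~2(a), assuming $n\le 2f$ and $Q$ connected I would derive a contradiction by an orbit analysis: connectedness makes the family $\{F_i\}$ homogeneous under $G$, and I would aim to show that this forces ``$j\in F_i$'' to be an equivalence relation after all (the scenario behind the second option of the dichotomy makes a whole $\mu_j$-orbit of indices share one permutation, yielding a $G$-invariant partition incompatible with transitivity); then $f\mid n$ together with $n\le 2f$ leaves only $n=2f$, whose quandle was just classified and is visibly not connected.

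The step I expect to be the main obstacle is the reduction to $(m,f)=(3,2)$ in~1(b): the block/congruence machinery cleanly presents $Q$ as an ``extension'' of a higher-order cyclic-type quandle, but discarding every such extension except the octahedron quandle requires turning the within-block cyclicity and the cross-block conjugacy relations into a genuine impossibility, and this is where I anticipate needing the external classification of cyclic-type quandles. The orbit argument for~2(a) is a secondary difficulty, precisely because there ``$j\in F_i$'' is not automatically an equivalence relation and one must argue with $G$-orbits rather than with the block partition.
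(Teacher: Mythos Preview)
Your dichotomy (if $j\in F_i$ then $\mu_i,\mu_j$ commute, $\mu_i$ preserves $C_j$, hence $C_i=C_j$ or $C_i\cap C_j=\varnothing$, the latter only if $n\le 2f$) is correct and gives a clean route to the paper's Proposition~\ref{prop:assocind}, Corollary~\ref{cor:order} and Proposition~\ref{prop:equivrel}. Your arguments for 1(a) and 2(b) are essentially complete and match the paper's, with minor cosmetic differences (the paper proves connectedness in 1(a) directly --- any two non-associate $i,j$ lie in $C_{k_0}$ for $k_0$ in a third block --- rather than via the quotient, and in 2(b) it writes down the isomorphism explicitly, but the substance is the same).

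The genuine gaps are 1(b) and 2(a), and in both cases your plan diverges from the paper in a way that is not obviously repairable.

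For 1(b) you propose to read $Q$ as an extension of the quotient cyclic-type quandle $\overline Q$ of order $m=n/f$ with one fixed point, invoke the external classification of such quandles (\cite{KamadaTamaruWada}, \cite{Vendramin}) to say $m$ is a prime power and $\overline Q$ is affine, and then argue the extension is overdetermined unless $(m,f)=(3,2)$. The paper does \emph{not} proceed this way. It never uses the one-fixed-point classification; instead it fixes $\mu_n=(1\,2\,\cdots\,n{-}f)$ and extracts a list of concrete constraints on $\mu_{n-f}$ (Theorem~\ref{thr:thethr}, especially assertions 4--6), then shows (Proposition~\ref{prop:assind}, Lemma~\ref{lemma:exp}, Proposition~\ref{prop:exp}) that the exponents $l^*_{i,f}$ relating the $f$ associate permutations are forced to be $\{1+j(n-f)/f:0\le j<f\}$, and finally runs two separate eliminations: for $n=3f$ (Proposition~\ref{prop:cyclic1}) one of the associates is $\mu_{n-f}^{-1}$ and a short orbit computation forces a $4$-cycle in $\mu_{n-f}$, hence $f=2$; for $n>3f$ (Proposition~\ref{prop:cyclic2}) assertion~6 of Theorem~\ref{thr:thethr} produces $n-2f$ distinct centralizer exponents $k_m$ which must avoid $F_{n-f}$, and $k_m=2$ then forces two distinct associate permutations to agree at a non-fixed point. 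Your extension/quotient idea may be salvageable, but knowing $\overline Q$ alone does not visibly constrain $f$, and you have not indicated which relation would rule out, say, $m=3$ with $f\ge 3$ or $m=4,5,7,\dots$ with $f=2$; this is exactly the work the paper's direct computations perform.

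For 2(a) with $f+2\le n<2f$ your sketch (``connectedness should force the equivalence relation, then $f\mid n$ gives a contradiction'') is where I think the approach is actually wrong-headed, not merely incomplete: in this range the second branch of your dichotomy is genuinely available, and there is no reason to expect connectedness by itself to suppress it. The paper's proof (Subsection~\ref{subsect:disconnected-range-f+2-n-2f}) does not try to recover the equivalence relation. It instead observes that $F_n$ is a subquandle, uses a pigeonhole to find some $a_0\in F_n\cap F_1$, proves the key Lemma~\ref{lem:lem} (if $a\in F_n$ lies in $F_{i_0}$ for one $i_0\in C_n$ then it lies in $F_i$ for all $i\in C_n$), so $a_0$ is fixed by every $\mu_i$ with $i\in C_n$, and then iteratively enlarges the set $A=\{\mu_s(a_0):s\in Q\}\subseteq F_n$ to a proper $G$-invariant subset contained in a connected component of the subquandle $F_n$. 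This is an explicit construction of a non-trivial invariant set, and nothing in your outline approximates it.
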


\begin{theorem}\label{thm:main2}
Let $n$ and $f$ be positive integers such that $n > f + 1 > 2$ and $n \leq 2f$ (corresponding to the non-connected case in Theorem \ref{thm:main1}). If $(n-f) \mid f$, there is a quandle of cyclic type of order $n$ with $f$ fixed points whose permutations are given as follows. For each $i$ such that $1 \leq i \leq \frac{n}{n-f}$,  $$ \mu_{(i-1)(n-f)+1}=\mu_{(i-1)(n-f)+2}= \cdots =\mu_{i(n-f)}=$$ $$= \bigg( i(n-f)+1\quad  i(n-f)+2 \quad \cdots \quad (i+1)(n-f)\bigg) ,$$ with indices read mod  $\frac{n}{n-f}$.
\end{theorem}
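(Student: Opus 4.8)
The plan is to take the proposed permutations at face value and verify, via Theorem \ref{thm:equivdef}, that they define a quandle with the stated profile. Write $k := n-f$ for the common cycle length; the hypotheses $n > f+1$ and $(n-f)\mid f$ give $k \geq 2$ and $k \mid f$, hence $k \mid n$, so $m := \frac{n}{n-f}$ is an integer, and since $k \le f$ we have $m = 1 + \frac{f}{k} \ge 2$. I would partition $\{1, \dots, n\}$ into the $m$ consecutive blocks $B_i := \{(i-1)k+1,\, (i-1)k+2,\, \dots,\, ik\}$, $1 \le i \le m$, each of size $k$, and let $\sigma_i$ denote the $k$-cycle $\big( (i-1)k+1 \;\; (i-1)k+2 \;\; \cdots \;\; ik \big)$ supported on $B_i$ (the identity elsewhere), with the block index read mod $m$. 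In this language the permutations of the theorem are exactly $\mu_j := \sigma_{b(j)+1}$, where $b(j) \in \{1, \dots, m\}$ is the index of the block containing $j$ and $b(j)+1$ is read mod $m$; equivalently, $\mu_j$ is constant on each block $B_i$ and equals the $k$-cycle on the next block $B_{i+1}$.

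First I would record the easy facts. Each $\mu_j$ is a single $k$-cycle together with its $n - k = f$ fixed points, so the $n$ permutations all have pattern $\{1, \dots, 1, n-f\}$ with $f$ ones; thus the profile is the required constant one, and since $f > 1$ and $n-f>1$ by hypothesis this is genuinely a quandle of cyclic type with several fixed points once we know it is a quandle. Moreover $\mu_j(j) = j$: the support of $\mu_j = \sigma_{b(j)+1}$ is $B_{b(j)+1}$, and $B_{b(j)+1} \neq B_{b(j)} \ni j$ because $m \geq 2$.

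The substance is the conjugation identity $\mu_{\mu_i(j)} = \mu_i\,\mu_j\,\mu_i^{-1}$ of Theorem \ref{thm:equivdef}, which I would derive from two observations. The first is that every $\mu_i$ preserves the block partition: it is the identity off $B_{b(i)+1}$ and permutes $B_{b(i)+1}$ within itself, so $\mu_i(j)$ lies in the same block as $j$, i.e. $b(\mu_i(j)) = b(j)$; since $\mu_{(\cdot)}$ depends only on the block of the index, this gives $\mu_{\mu_i(j)} = \mu_j$. The second is that $\mu_i$ and $\mu_j$ always commute: if $b(i) = b(j)$ they are literally the same permutation, while if $b(i) \neq b(j)$ their supports $B_{b(i)+1}$ and $B_{b(j)+1}$ are distinct blocks (because $b(i)+1 \equiv b(j)+1 \pmod m$ would force $b(i) = b(j)$), hence disjoint. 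Consequently $\mu_i\,\mu_j\,\mu_i^{-1} = \mu_j = \mu_{\mu_i(j)}$, and Theorem \ref{thm:equivdef} then yields that $j * i := \mu_i(j)$ is a quandle structure of the desired cyclic type.

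There is no deep obstacle here; the only thing demanding care is the bookkeeping of block indices modulo $m$, and in particular the clean isolation of the two facts above — that each $\mu_i$ moves points only within their own blocks (which collapses $\mu_{\mu_i(j)}$ to $\mu_j$) and that the $\mu_i$ pairwise commute. It is worth remarking that the divisibility hypothesis $(n-f)\mid f$ already forces $n \le 2f$ (a positive divisor of $f$ cannot exceed $f$), so this construction lives precisely in the non-connected range of Theorem \ref{thm:main1}, and for $m=2$ it recovers the unique quandle of order $2f$ described there.
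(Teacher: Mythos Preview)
Your proof is correct and follows essentially the same approach as the paper's: both verify the conjugation identity of Theorem~\ref{thm:equivdef} by noting that the permutations are constant on blocks (so $\mu_{\mu_i(j)}=\mu_j$) and pairwise commute (being either equal or supported on disjoint blocks). Your write-up is more explicit in setting up the block notation and checking $m\ge 2$, but the underlying argument is identical to the paper's brief verification in Theorem~\ref{thm:constructing-quandles}.
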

%\begin{corollary}\label{cor:main2}
% Let $f$ be an integer with $f>1$. If we choose $n=2f$, we fall on a particular instance of Theorem \ref{thm:main2}, since $n-f = f \mid f$. The construction therein applies, %yielding a quandle whose permutations are $$\mu_1 = \mu_2 = \cdots = \mu_f = \bigg( f+1 \quad f+2 \quad \cdots \quad 2f \bigg)$$ $$\mu_{f+1} = \mu_{f+2} = \cdots = \mu_{2f} = %\bigg( 1 \quad 2 \quad \cdots \quad f \bigg) .$$ Moreover, this quandle is unique, up to isomorphism.
%\end{corollary}

\begin{theorem}\label{thm:extracting-common-fixed-point}
Suppose $f$ is an integer strictly greater than $2$ and $n$ a positive integer such that $f+2 \leq n \leq 2f$. Consider a quandle of cyclic type of order $n$ and $f$ fixed points over the set $Q = \{ 1, 2, \dots , n  \}$ with sequence of permutations $\mu_i$ with $i\in \{  1, 2, \dots , n  \}$. Assume further there is a $g_0 \in Q$ such that $\mu_i(g_0) = g_0$, for any $i\in Q$. Then, the set $Q' = Q \setminus \{  g_0 \}$ along with the sequence of permutations $\mu'_i = \mu_i|_{Q'}$ for each $i\in Q'$ defines a quandle of cyclic type of order $n-1$ with $f-1$ fixed points. We call this the extraction of the common fixed point $g_0$.
\end{theorem}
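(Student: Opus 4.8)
The plan is to verify directly that $(Q',\mu')$ meets the two conditions of Theorem~\ref{thm:equivdef} and that its profile is the one demanded of a quandle of cyclic type of order $n-1$ with $f-1$ fixed points; everything reduces to the single structural fact that $g_0$ is fixed by \emph{all} the $\mu_i$. The first step is to record that, because $\mu_i(g_0)=g_0$ for every $i\in Q$, the subset $Q'=Q\setminus\{g_0\}$ is invariant under each $\mu_i$, so $\mu'_i=\mu_i|_{Q'}$ is a well-defined permutation of $Q'$ for every $i\in Q$ (in particular for every $i\in Q'$). Moreover, restricting to the invariant subset $Q'$ is compatible with composition and inversion: the stabilizer of $g_0$ in $S_n$ maps to $S_{Q'}=\mathrm{Sym}(Q')$ by $\sigma\mapsto\sigma|_{Q'}$, and this map is a group homomorphism, so $(\sigma\tau)|_{Q'}=(\sigma|_{Q'})(\tau|_{Q'})$ and $(\sigma^{-1})|_{Q'}=(\sigma|_{Q'})^{-1}$ whenever $\sigma,\tau$ both fix $g_0$.

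Next I would check the two conditions of Theorem~\ref{thm:equivdef} for the family $\{\mu'_i : i\in Q'\}$. Since $i\ne g_0$ for $i\in Q'$, we get $\mu'_i(i)=\mu_i(i)=i$ immediately from the corresponding identity in $Q$. For the conjugation condition, take $i,j\in Q'$; then $\mu_i(j)$ again lies in $Q'$ (invariance), so $\mu'_{\mu_i(j)}$ is defined, and, combining $\mu_{\mu_i(j)}=\mu_i\mu_j\mu_i^{-1}$ (valid in $Q$) with the homomorphism property from the first step, one obtains $\mu'_{\mu'_i(j)}=\mu'_{\mu_i(j)}=(\mu_i\mu_j\mu_i^{-1})|_{Q'}=\mu'_i\mu'_j(\mu'_i)^{-1}$. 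By Theorem~\ref{thm:equivdef} this shows $(Q',\mu')$ is a quandle.

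Then I would identify the profile. For each $i\in Q'$, the permutation $\mu_i$ is, by hypothesis, one cycle of length $n-f$ together with $f$ fixed points; $g_0$ is one of those fixed points (it is a common fixed point), and since a fixed point never appears in a cycle of length greater than $1$, deleting $g_0$ from $Q$ leaves the non-singular cycle of $\mu_i$ intact and removes precisely one of its $f$ fixed points. Hence $\mu'_i$ has exactly $f-1$ fixed points and one cycle of length $n-f=(n-1)-(f-1)$, which is $\ge 2$; so $(Q',\mu')$ has constant profile, equal to the required one. The auxiliary inequalities that make ``quandle of cyclic type of order $n-1$ with $f-1$ fixed points'' a legitimate notion, namely $f-1>1$ and $(n-1)-2\ge f-1$, follow at once from $f>2$ and $n\ge f+2$ (note that the bound $n\le 2f$ from the hypothesis is not actually needed for this conclusion).

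I expect no serious obstacle here; the one point that deserves to be spelled out carefully, rather than waved through, is the compatibility of restriction with the group operations used in the second paragraph — i.e. that $\sigma\mapsto\sigma|_{Q'}$ is a homomorphism on the stabilizer of $g_0$ — since that is exactly the mechanism transporting the defining identity $\mu_{\mu_i(j)}=\mu_i\mu_j\mu_i^{-1}$ from $Q$ down to $Q'$. The rest is routine cycle bookkeeping.
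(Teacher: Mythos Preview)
Your argument is correct and follows essentially the same route as the paper: restrict every $\mu_i$ to the invariant subset $Q'=Q\setminus\{g_0\}$ and observe that the defining identities $\mu'_i(i)=i$ and $\mu'_{\mu'_i(j)}=\mu'_i\mu'_j(\mu'_i)^{-1}$ carry over verbatim, so Theorem~\ref{thm:equivdef} yields a quandle on $Q'$. Your write-up is in fact tidier than the paper's on two counts: you make explicit the (trivial but essential) point that restriction to $Q'$ is a group homomorphism on the stabilizer of $g_0$, and you actually verify the profile of each $\mu'_i$ and the numerical constraints $f-1>1$, $(n-1)-(f-1)\ge 2$, which the paper leaves implicit. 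Conversely, the paper spends a paragraph noting that $\mu_{g_0}$ commutes with every $\mu_i$ and drawing consequences about the permutations $\mu_{g_j}$ for $g_j\in C_{g_0}$; this observation is not used in the proof proper, so you lose nothing by omitting it.
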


\begin{theorem}\label{thm:adjoining-common-fixed-point}
Let $n$ be an integer greater than $2$. Let $Q$ be the underlying set of a quandle whose permutations are denoted $\mu_i$, for each $i\in Q$. Let $g_0\notin Q$ and consider the set $Q' = Q \cup \{  g_0 \}$. Suppose there is a permutation, $\mu$, of the elements of $Q$, such that $\mu\mu_i = \mu_i\mu$, for each $i\in Q$. Then, $Q'$ along with the permutations $$\mu'_i = (g_0)\mu_i \qquad \text{ for each } i\in Q \qquad \qquad \text{ and } \qquad \qquad \mu'_{g_0} = (g_0)\mu$$ is a quandle with a common fixed point, $g_0$. We call this the adjoining of a common fixed point $g_0$.
\end{theorem}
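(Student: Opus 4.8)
The plan is to invoke Theorem~\ref{thm:equivdef}: writing $\mu'_k$ for the permutation proposed for each $k\in Q'$, it suffices to check that $\mu'_k(k)=k$ for every $k\in Q'$ and that $\mu'_{\mu'_k(l)}=\mu'_k\mu'_l(\mu'_k)^{-1}$ for all $k,l\in Q'$; once these hold, the first batch of identities records in particular that every $\mu'_k$ and $\mu'_{g_0}$ fixes $g_0$, so $g_0$ is a common fixed point. A remark that keeps the bookkeeping clean is that ``padding with the singleton cycle $(g_0)$'', i.e. the map $\sigma\mapsto(g_0)\sigma$ sending a permutation of $Q$ to the permutation of $Q'$ agreeing with it on $Q$ and fixing $g_0$, is an injective group homomorphism $\mathrm{Sym}(Q)\hookrightarrow\mathrm{Sym}(Q')$; thus $(g_0)\sigma\cdot(g_0)\tau=(g_0)(\sigma\tau)$, $\big((g_0)\sigma\big)^{-1}=(g_0)\sigma^{-1}$, and $(g_0)\sigma=(g_0)\tau$ iff $\sigma=\tau$. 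With this, idempotency is immediate: for $k\in Q$ it is $\mu_k(k)=k$, which holds because $Q$ is a quandle, and for $k=g_0$ it is built into the definition $\mu'_{g_0}=(g_0)\mu$.

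For the conjugation identity I would split according to whether each of $k,l$ lies in $Q$ or equals $g_0$, noting first that $\mu'_k(l)=g_0$ happens exactly when $l=g_0$ (each $\mu'_k$ fixes $g_0$ and restricts to a bijection of $Q$), so the left-hand subscript is $g_0$ precisely when $l=g_0$. If $k,l\in Q$, the identity reads $(g_0)\mu_{\mu_k(l)}=(g_0)(\mu_k\mu_l\mu_k^{-1})$, which by the homomorphism remark is equivalent to $\mu_{\mu_k(l)}=\mu_k\mu_l\mu_k^{-1}$, true since $Q$ is a quandle. If $k\in Q$ and $l=g_0$, the left side is $\mu'_{\mu'_k(g_0)}=\mu'_{g_0}=(g_0)\mu$ while the right side is $(g_0)\mu_k\cdot(g_0)\mu\cdot(g_0)\mu_k^{-1}=(g_0)(\mu_k\mu\mu_k^{-1})=(g_0)\mu$ by the hypothesis $\mu_k\mu=\mu\mu_k$; so they agree. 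If $k=l=g_0$, both sides equal $\mu'_{g_0}$.

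The remaining case, $k=g_0$ and $l\in Q$, is where I expect the real content to sit. Here $\mu'_{g_0}(l)=\mu(l)\in Q$, so the left side is $(g_0)\mu_{\mu(l)}$, whereas the right side $(g_0)\mu\cdot(g_0)\mu_l\cdot(g_0)\mu^{-1}=(g_0)(\mu\mu_l\mu^{-1})$ collapses, again by the commutativity hypothesis, to $(g_0)\mu_l$; hence the identity to be produced is $\mu_{\mu(l)}=\mu_l$ for every $l\in Q$, i.e. $\mu$ must be a quandle automorphism of $Q$, not merely an element centralising $\{\mu_i\}_{i\in Q}$. This is the hard part, and the place I would have to be careful: commuting with every $\mu_i$ does not on its own force $\mu_{\mu(l)}=\mu_l$, so either this automorphism property is part of the intended data on $\mu$, or one restricts to the $\mu$'s that arise from the extraction construction of Theorem~\ref{thm:extracting-common-fixed-point} --- for which it holds automatically, the relevant $\mu$ there being the restriction to the smaller set of the quandle permutation attached to the removed common fixed point $g_0$, hence an automorphism of the subquandle, and commuting with the surviving permutations precisely because the quandle relation $\mu_{\mu_i(j)}=\mu_i\mu_j\mu_i^{-1}$ with $j=g_0$ and $\mu_i(g_0)=g_0$ gives $\mu_i\mu_{g_0}\mu_i^{-1}=\mu_{g_0}$. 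Granting $\mu_{\mu(l)}=\mu_l$, all four cases close, Theorem~\ref{thm:equivdef} gives that $(Q',\mu')$ is a quandle, and $g_0$ is a common fixed point by construction.
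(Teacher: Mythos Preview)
Your analysis is sharper than the paper's own proof. The paper's argument is a two-line sketch: it records that $\mu_{\mu_i(i')}=\mu_i\mu_{i'}\mu_i^{-1}$ transfers verbatim to the primed permutations for $i,i'\in Q$, and that $\mu\mu_i=\mu_i\mu$ transfers to $\mu'_{g_0}\mu'_i=\mu'_i\mu'_{g_0}$. The first covers the case $k,l\in Q$; the second covers $k\in Q$, $l=g_0$ (and trivially $k=l=g_0$). But the paper never addresses the case $k=g_0$, $l\in Q$, which is exactly the case you isolate as requiring $\mu_{\mu(l)}=\mu_l$.

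You are right that this extra requirement does not follow from the commutation hypothesis alone, and hence the theorem as stated is not correct. A small counterexample: take $Q=\{1,2,3,4\}$ with $\mu_1=\mu_2=\mu_3=\mathrm{id}$ and $\mu_4=(1\ 2)$; one checks directly that this is a quandle. The permutation $\mu=(3\ 4)$ commutes with every $\mu_i$ (disjoint supports with $(1\ 2)$), yet $\mu_{\mu(4)}=\mu_3=\mathrm{id}\neq(1\ 2)=\mu_4$. Adjoining $g_0=5$ with $\mu'_5=(5)(3\ 4)$ then fails the quandle condition at $k=5$, $l=4$: the left side is $\mu'_{\mu'_5(4)}=\mu'_3=\mathrm{id}$, while the right side is $\mu'_5\mu'_4(\mu'_5)^{-1}=(5)(1\ 2)$. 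So your caveat is not excess caution but a genuine correction: the statement needs the additional hypothesis that $\mu_{\mu(l)}=\mu_l$ for all $l\in Q$ (equivalently, that $\mu$ is an automorphism of $(Q,\ast)$), and your observation that this holds automatically for the $\mu$'s arising from Theorem~\ref{thm:extracting-common-fixed-point} explains why the construction still works in the paper's intended applications (in particular in Corollary~\ref{cor:an-infinite-sequence-adjoining-common-fixed-points}).
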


\begin{corollary}\label{cor:an-infinite-sequence-adjoining-common-fixed-points}
Let $(Q, \mu)$ be a quandle of cyclic type of order $n$ and $f$ fixed points with $(n-f)\, |\, f$ as in Theorem \ref{thm:main2}. Then any two permutations are either equal or move points from disjoint sets. So adjoining a common fixed point $g_0$ is accomplished by taking $\mu'_{g_0}=(g_0)\mu_{i_0}$ by picking any $i_0 \in Q$ and for any $j\in Q$, $\mu'_j = (g_0)\mu_j$.

Furthermore, this procedure can be iterated indefinitely, giving rise to an infinite sequence of quandles $Q_k$ of cyclic type of order $n+k$ and $f+k$ fixed points such that $f+k+2 \leq n + k \leq 2(f + k)$.
\end{corollary}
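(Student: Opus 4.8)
The plan is to read off the required commutativity directly from the explicit permutations supplied by Theorem \ref{thm:main2}, apply Theorem \ref{thm:adjoining-common-fixed-point} once, verify that the quandle obtained is again of cyclic type with several fixed points and has the announced parameters, and finally observe that the data needed to repeat the construction are inherited, so that a straightforward induction closes the argument.

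First I would prove the opening assertion. Write $m = \frac{n}{n-f}$ and $B_i = \{(i-1)(n-f)+1, \dots, i(n-f)\}$ for $1 \le i \le m$; the blocks $B_1, \dots, B_m$ partition $Q = \{1, \dots, n\}$, and by Theorem \ref{thm:main2} every index in $B_i$ carries the same permutation, namely the single $(n-f)$-cycle supported on $B_{i+1}$ (subscripts mod $m$). Hence two permutations $\mu_a$ and $\mu_b$ coincide when $a,b$ lie in the same block, and have disjoint supports $B_{i+1}$, $B_{j+1}$ when $a\in B_i$, $b\in B_j$ with $i \ne j$. In either case they commute; in particular, for any fixed $i_0 \in Q$ the permutation $\mu := \mu_{i_0}$ commutes with every $\mu_i$, $i\in Q$, so (using $n \ge f+2 \ge 4 > 2$) the hypothesis of Theorem \ref{thm:adjoining-common-fixed-point} is satisfied. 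That theorem then produces a quandle on $Q' = Q \cup \{g_0\}$ with $\mu'_i = (g_0)\mu_i$ for $i\in Q$ and $\mu'_{g_0} = (g_0)\mu_{i_0}$, i.e. precisely the permutations in the statement; note also that $\mu'_{g_0} = \mu'_{i_0}$.

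Next I would check that $Q'$ is a quandle of cyclic type of order $n+1$ with $f+1$ fixed points. Adjoining $g_0$ as an extra fixed point does not touch the non-singular cycle, so each $\mu'_i$ has pattern $\{\underbrace{1,\dots,1}_{f+1}, n-f\}$; since $(n+1)-(f+1) = n-f > 1$ and $f+1 > 1$, this is exactly the required constant profile, and $(f+1)+2 \le n+1 \le 2(f+1)$ follows immediately from $f+2 \le n \le 2f$. For the iteration I would then observe that the ``equal-or-disjoint-support'' property passes to $Q'$: the permutations of $Q'$ are the $(g_0)\mu_i$ together with $\mu'_{g_0} = \mu'_{i_0}$, and since $\operatorname{supp}((g_0)\mu_i) = \operatorname{supp}(\mu_i) \subseteq Q$, two of them are equal (respectively, have disjoint supports) exactly when the corresponding $\mu_i$'s are. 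Thus the same reasoning applies verbatim to $Q'$, and induction on $k$ yields the sequence $Q_k$ with $Q_0 = Q$ and $Q_{k+1}$ obtained from $Q_k$ by adjoining a common fixed point; each $Q_k$ is of cyclic type of order $n+k$ with $f+k$ fixed points, and $(f+k)+2 \le n+k \le 2(f+k)$ since $f+2\le n$ and $n \le 2f \le 2f+k$.

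There is no deep obstacle here — the content is bookkeeping — but the one point that genuinely needs care is making the induction self-sustaining: after adjoining a single common fixed point one must confirm that the new quandle still meets the hypothesis (commuting permutations, equivalently the equal-or-disjoint-support property) needed to adjoin the next one, which is exactly what the observations $\mu'_{g_0} = \mu'_{i_0}$ and the invariance of supports under adjoining a fixed point accomplish. It is also worth flagging that the $Q_k$ with $k\ge 1$ need not satisfy the divisibility condition of Theorem \ref{thm:main2} (as $n-f$ need not divide $f+k$), so the inductive step must lean only on the weaker support property rather than re-invoking Theorem \ref{thm:main2}.
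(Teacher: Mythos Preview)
Your proposal is correct and follows the same approach the paper intends: the paper does not give a standalone proof of this corollary but simply remarks that it follows by combining Theorem~\ref{thm:adjoining-common-fixed-point} with Theorem~\ref{thm:constructing-quandles} (equivalently Theorem~\ref{thm:main2}) and iterating. Your write-up is in fact more careful than the paper's one-line justification, in that you explicitly isolate the ``equal-or-disjoint-support'' property as the invariant that survives each adjunction and thereby make the induction self-sustaining without re-appealing to the divisibility hypothesis $(n-f)\mid f$, a point the paper leaves implicit.
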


\subsection{Organization}\label{subsect:org}

The Sections below are devoted to the proofs of these facts. In Section \ref{sect:cyclic-type-1st-properties} we prove that quandles of cyclic type of order $n$ with $f$ fixed points in the range $n > 2f$ are connected (assertion  $1.(a)$ in Theorem \ref{thm:main1}) and that for $n=2f$ there is  only one such quandle, up to isomorphism  and that this quandle is not connected (assertion $2.(b)$ and assertion $2.(a)$ (for $n=2f$) in Theorem \ref{thm:main1}).  In Subsection \ref{subsect:disconnected-range-f+2-n-2f} we prove that quandles of cyclic type of order $n$ and $f$ fixed points in the range $f+2 \leq n \leq 2f$ are not connected; this is the $2.(a)$  part (for $n<2f$) in Theorem \ref{thm:main1}. In Section \ref{sect:n>2f} we prove that, up to isomorphism, there is only one quandle of cyclic type of order $n$ with $f$ fixed points in the range $n > 2f$. This quandle occurs for $n=6$ and $f=2$ and is known as the octahedron quandle (assertion $1.(b)$ in Theorem \ref{thm:main1}). This completes the proof of Theorem \ref{thm:main1}.  In Section \ref{sect:families-range-f+2-n-2f} we prove Theorems \ref{thm:main2}, \ref{thm:extracting-common-fixed-point}, and \ref{thm:adjoining-common-fixed-point}. %After Section \ref{sect:properties}, which still develops some terminology, the Sections referred to can be independently of each other.
Finally, in Section \ref{sect:further-research} we collect a few questions for further research.

\subsection{Acknowledgements.}\label{subsect:ack}

P.L. acknowledges support from FCT (Funda\c c\~ao para a Ci\^encia e a Tecnologia), Portugal, through project FCT PTDC/MAT-PUR/31089/2017, ``Higher Structures and Applications''.

\section{Quandles of Cyclic Type with Several Fixed Points - First Properties and Examples.}\label{sect:cyclic-type-1st-properties}

In this Section, we state and prove a theorem about the structure of quandles of cyclic type with several fixed points. This theorem provides a number of conditions quandles of cyclic type of order $n$ with $f$ fixed points such that $n\geq 2f$ must verify. This theorem is key to proceed to a classification of quandles of cyclic type.
%\par
%In Section 3.1, we restrict our study to quandles of cyclic type of order $n\geq2f$. In fact, quandles of cyclic type with such an order satisfy some very useful properties, making them easier to classify. In Section 3.2, we restrict our study to connected quandles of cyclic type of order $n\geq2f$ and we prove a theorem with some necessary conditions these quandles have to verify in order for them to be cyclic.

\subsection{Associate Indices}

Quandles of cyclic type of order $n\geq2f$ have a very useful property, which is a consequence of the following proposition.

\begin{prop}\label{prop:assocind}
Let $Q$ be a quandle of cyclic type of order $n$ with $f$ fixed points such that $n\geq2f$, and let $F_k=\{k, g_k^1, \dots, g_k^{f-1}\}$ be the set of $f$ fixed points of $\mu_k$. Then $\mu_{g_k^i}(g)=g$, $\forall g\in F_k$, $\forall i\in \{1, \dots, f-1\}$.
\end{prop}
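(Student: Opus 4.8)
The plan is to convert the hypothesis into a commutation relation using the conjugation identity from Theorem~\ref{thm:equivdef}. Fix $k$ and $i \in \{1,\dots,f-1\}$, and abbreviate $h := g_k^i$. Since $h \in F_k$ we have $\mu_k(h) = h$, so applying the identity $\mu_{\mu_i(j)} = \mu_i\mu_j\mu_i^{-1}$ with the pair $(k,h)$ gives $\mu_h = \mu_{\mu_k(h)} = \mu_k\mu_h\mu_k^{-1}$, i.e.\ $\mu_h$ and $\mu_k$ commute. (In fact this already shows the general principle that any fixed point of $\mu_k$ has its associated permutation commuting with $\mu_k$.)

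Next I would use the elementary fact that a permutation commuting with $\mu_k$ respects the partition of $Q$ into the cycles of $\mu_k$; in particular $\mu_h$ maps $F_k$ onto itself and $C_k$ onto itself. Indeed, if $\mu_k(x) = x$ then $\mu_k(\mu_h(x)) = \mu_h(\mu_k(x)) = \mu_h(x)$, so $\mu_h(F_k) \subseteq F_k$, with equality since $F_k$ is finite and $\mu_h$ injective, and then $\mu_h(C_k) = \mu_h(Q\setminus F_k) = Q\setminus F_k = C_k$. Now let $C_h$ denote the support of the unique long cycle of $\mu_h$, a set of size $n-f$. Since $C_h$ is the support of a single cycle and $F_k$, $C_k$ are disjoint $\mu_h$-invariant sets covering $Q$, the set $C_h$ must lie entirely in $F_k$ or entirely in $C_k$.

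It then remains to exclude the possibility $C_h \subseteq F_k$, and this is where the hypothesis $n \geq 2f$ enters: $|C_h| = n-f \geq f = |F_k|$, so $C_h \subseteq F_k$ would force $n = 2f$ and $C_h = F_k$. But $h = g_k^i \in F_k$, while $h$ is a fixed point of $\mu_h$ and hence $h \notin C_h$; this contradicts $C_h = F_k$. Therefore $C_h \subseteq C_k$, so every element of $F_k$ lies outside the long cycle of $\mu_h$ — that is, $\mu_{g_k^i}(g) = g$ for all $g \in F_k$, as claimed. The only place any care is needed is this boundary case $n = 2f$, where the cardinality bound is not strict and one must invoke the point $g_k^i$ itself (fixed by $\mu_{g_k^i}$ yet lying in $F_k$) to conclude; for $n > 2f$ the argument is a plain cardinality count, and the commutation step together with the $\mu_h$-invariance of $F_k$ and $C_k$ are entirely routine.
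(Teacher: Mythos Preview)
Your proof is correct and follows essentially the same approach as the paper: both derive the commutation $\mu_h\mu_k=\mu_k\mu_h$ from the conjugation identity, conclude that $\mu_h$ preserves $F_k$, and then use the cardinality bound $n-f\geq f$ to rule out the long cycle of $\mu_h$ living inside $F_k$. The only cosmetic difference is that the paper first removes $g_k^i$ from $F_k$ (obtaining an invariant set of size $f-1<n-f$, which disposes of the boundary case $n=2f$ automatically), whereas you handle that boundary case by noting $h\in F_k\setminus C_h$.
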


\begin{proof}
For each $i\in \{1, \dots, f-1\}$ and for each $g\in F_k\setminus \{g^i_k\}$, we have \[\mu_{g_k^i}(g)=\mu_{\mu_k(g_k^i)}(g)=\mu_k\mu_{g_k^i}\mu_k^{-1}(g)=\mu_k(\mu_{g_k^i}(g)),\] that is, $\mu_k$ fixes $\mu_{g_k^i}(g)$ and hence $\mu_{g_k^i}(g)\in F_k\setminus\{g_k^i\}$, as $\mu_{g_k^i}(g_k^i)=g_k^i$. Therefore, the restriction of $\mu_{g_k^i}$ to $F_k\setminus\{g_k^i\}$ is a bijection from this set to itself. Thus, we must have $\mu_{g_k^i}(g)=g$, $\forall g\in F_k\setminus\{g_k^i\}$. Otherwise, $\mu_{g_k^i}$ would have a cycle of length $1<l\leq f-1=|F_k\setminus\{g_k^i\}|$, that would also verify $l=n-f\geq2f-f=f$, which is a contradiction. Hence, the result follows.
\end{proof}

We introduce the notions of $\emph{associate indices}$ and $\emph{associate permutations}$.

\begin{definition}\label{def:assocind}
Let $Q$ be a quandle of order $n$ with permutations denoted by $\mu_k$, $k\in\{1, \dots, n\}$.

\begin{itemize}
    \item If $i$ and $j$ are different indices such that $\mu_i(j)=j$ and $\mu_j(i)=i$, we say that $i$ and $j$ are $\emph{associate indices}$;
    \item If $i$ and $j$ are associate indices then $\mu_i$ and $\mu_j$ are said $\emph{associate permutations}$;
\end{itemize}

\end{definition}

\begin{corollary}
With the terminology introduced in Definition \ref{def:assocind}, Proposition \ref{prop:assocind} states that, for each quandle of cyclic type whose order $n$ and number of fixed points $f$ satisfies $n\geq 2f$, associate permutations have the same sets of fixed points.
\end{corollary}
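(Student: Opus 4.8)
The plan is to obtain the statement as an essentially immediate reformulation of Proposition \ref{prop:assocind} in the vocabulary of Definition \ref{def:assocind}. So let $Q$ be a quandle of cyclic type of order $n$ with $f$ fixed points satisfying $n \geq 2f$, and suppose $\mu_i$ and $\mu_j$ are associate permutations; by Definition \ref{def:assocind} this means $i \neq j$, $\mu_i(j) = j$ and $\mu_j(i) = i$. Writing $F_i = \{i, g_i^1, \dots, g_i^{f-1}\}$ for the fixed point set of $\mu_i$, the first step is to observe that $\mu_i(j) = j$ together with $j \neq i$ forces $j = g_i^m$ for some $m \in \{1, \dots, f-1\}$, i.e.\ $j$ is one of the non-trivial fixed points of $\mu_i$.

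The key step is then to feed this into Proposition \ref{prop:assocind} with $k = i$: it gives $\mu_{g_i^m}(g) = g$ for every $g \in F_i$, that is, $\mu_j$ fixes every element of $F_i$, so $F_i \subseteq F_j$. To upgrade this inclusion to an equality I would use that $Q$ has constant profile, so $|F_i| = |F_j| = f$ and the inclusion is automatically an equality; alternatively one repeats the same argument with the roles of $i$ and $j$ exchanged, now using $\mu_j(i) = i$ to place $i$ in $F_j$ and Proposition \ref{prop:assocind} with $k = j$, obtaining $F_j \subseteq F_i$. Either route yields $F_i = F_j$, which is precisely the claim of the corollary.

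I do not expect a genuine obstacle here; the proof is a one-line consequence of Proposition \ref{prop:assocind}. The only points deserving a moment's attention are that the hypothesis $n \geq 2f$ is exactly the hypothesis under which Proposition \ref{prop:assocind} was proved, so it may be invoked without further ado, and that the corollary asserts only the stated implication — nothing is claimed (nor needed) about whether two permutations sharing a fixed point set must be associate.
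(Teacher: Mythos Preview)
Your proposal is correct and matches the paper's approach: the paper states this corollary without proof, treating it as an immediate reformulation of Proposition \ref{prop:assocind} in the language of Definition \ref{def:assocind}, and your argument is precisely the one-line unpacking that justifies this. Your observation that only the condition $\mu_i(j)=j$ (and not the symmetric $\mu_j(i)=i$) is actually needed is also correct and is exactly what the paper exploits later in the proof of Corollary \ref{cor:order}.
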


In the sequel, we assume the order $n$ of any quandle of cyclic type to be greater than or equal to $2f$, unless otherwise stated. Therefore, Proposition \ref{prop:assocind} always applies.\par
We now prove the main result of this section, which is a consequence of the previous results.

\begin{corollary}\label{cor:order}
Assume $n$ is the order of a quandle of cyclic type with $f$ fixed points. If $n\geq2f$, then $n$ is a multiple of $f$ i.e., $n=cf$ for some integer $c\geq 2$.
\end{corollary}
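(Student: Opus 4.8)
The plan is to use Proposition \ref{prop:assocind} to show that the relation ``being an associate index of'' organizes $Q = \{1, \dots, n\}$ into blocks, each of which coincides with a common fixed-point set, and then count. Concretely, fix $k\in Q$ and write $F_k = \{k, g_k^1, \dots, g_k^{f-1}\}$ for its set of $f$ fixed points. Proposition \ref{prop:assocind} tells us that $\mu_{g}(g') = g'$ for all $g, g' \in F_k$; in particular any two distinct elements of $F_k$ are associate indices, and moreover $F_g = F_k$ for every $g\in F_k$, by the Corollary immediately following Proposition \ref{prop:assocind} (associate permutations have equal fixed-point sets). So $F_k$ is ``saturated'': the fixed-point set of any of its members is $F_k$ itself.

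Next I would argue that the sets $F_k$, as $k$ ranges over $Q$, partition $Q$ into blocks of size exactly $f$. Each $F_k$ has size $f$ by hypothesis (the profile is constant with $f$ fixed points per permutation), and each contains $k$, so the $F_k$ cover $Q$. For disjointness-or-equality: if $F_k \cap F_{k'} \neq \emptyset$, pick $g$ in the intersection; then $F_k = F_g = F_{k'}$ by the saturation observation applied twice (once for $g\in F_k$, once for $g\in F_{k'}$). Hence distinct blocks are disjoint, and $Q$ is a disjoint union of blocks each of cardinality $f$. Therefore $f \mid n$, say $n = cf$. Finally $c \geq 2$ follows from the standing hypothesis $n \geq 2f$ in the statement (and in fact $c > 1$ strictly, since $n > f+1 > f$ forces $n \neq f$, and a multiple of $f$ that is $\geq 2f$ has $c\geq 2$).

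I do not anticipate a serious obstacle here: the content is entirely carried by Proposition \ref{prop:assocind} and its Corollary, and what remains is the elementary ``saturated blocks partition the set'' argument. The one point requiring a little care is making sure the saturation step is invoked correctly — namely that $g\in F_k$ implies not merely that $g$ is fixed by the relevant $\mu$'s but that $F_g = F_k$ as sets — which is exactly the Corollary following Proposition \ref{prop:assocind}, so nothing new is needed. One should also double-check that the hypothesis $n\geq 2f$ (rather than $n > 2f$) is what is in force, so that the conclusion $c\geq 2$ is legitimate even in the boundary case $n = 2f$.
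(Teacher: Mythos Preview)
Your proposal is correct and follows essentially the same approach as the paper: show that the fixed-point sets $F_k$ are pairwise either equal or disjoint, hence partition $Q$ into blocks of size $f$. The only cosmetic difference is that the paper re-runs the argument of Proposition~\ref{prop:assocind} (with the roles of the two indices swapped) to prove the implication $k\in F_s \Rightarrow F_s=F_k$, whereas you obtain $g\in F_k \Rightarrow F_g=F_k$ by citing Proposition~\ref{prop:assocind} directly and using $|F_g|=|F_k|=f$; your route is slightly more economical but not genuinely different.
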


\begin{proof}
We show that if there is an index $s$ such that $\mu_s(k)=k$, for some $k$, then $s\in F_k$. For each $g\in F_s\setminus\{k\}$, \[\mu_k(g)=\mu_{\mu_s(k)}(g)=\mu_s\mu_k\mu_s^{-1}(g)=\mu_s(\mu_k(g)),\] that is, $\mu_s$ fixes $\mu_k(g)$ and hence $\mu_k(g)\in F_s\setminus\{k\}$, as $\mu_k(k)=k$. Therefore, the restriction of $\mu_k$ to $F_s\setminus\{k\}$ is a bijection from this set to itself. Arguing as in the proof of Proposition \ref{prop:assocind}, we must have $\mu_k(g)=g$, $\forall g\in F_s\setminus\{k\}$, which implies that $F_s=F_k$, and in particular, $s\in F_k$. Thus, the sets of fixed points corresponding to two permutations are either equal or disjoint. Therefore, the order $n$ of a quandle of cyclic type with $f$ fixed points, with $n\geq2f$, has to be a multiple of $f$.
\end{proof}

The following proposition is now an immediate consequence of our previous considerations.

\begin{prop}\label{prop:equivrel}
Given a quandle of cyclic type of order $n$ with $f$ fixed points and $n\geq 2f>2$, ``i is associate to j'' generates an equivalence relation on the underlying set of the quandle.
\end{prop}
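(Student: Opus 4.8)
The plan is to show that the reflexive closure of the relation ``$i$ is associate to $j$'' (which is what is meant by the equivalence relation it \emph{generates}, since symmetry will be free) coincides with the relation
\[
i \sim j \quad :\Longleftrightarrow\quad F_i = F_j,
\]
that is, with the partition of $\{1,\dots,n\}$ into the fibres of the map $i \mapsto F_i$ sending an index to the fixed-point set of its permutation. Any such fibrewise relation is automatically an equivalence relation, being the pullback of equality; so once this identification is established we are done.

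First I would dispose of reflexivity and symmetry. Declaring $i \sim i$ for every $i$ handles reflexivity, and symmetry is immediate from Definition \ref{def:assocind}, since the defining conditions $\mu_i(j) = j$ and $\mu_j(i) = i$ are symmetric under interchanging $i$ and $j$. The real content is transitivity, and the key input here is the dichotomy obtained in the proof of Corollary \ref{cor:order}: for any two indices $s,k$ the fixed-point sets $F_s$ and $F_k$ are either equal or disjoint (this uses $n \geq 2f$ through Proposition \ref{prop:assocind}). Granting this, I would check that for distinct indices $i$ and $j$ one has ``$i$ associate to $j$'' if and only if $F_i = F_j$. Indeed, if $i$ and $j$ are associate then $\mu_i(j) = j$ gives $j \in F_i$, and since also $j \in F_j$ the sets $F_i$ and $F_j$ meet, hence are equal; conversely, if $F_i = F_j$, then $j \in F_j = F_i$ forces $\mu_i(j) = j$ and $i \in F_i = F_j$ forces $\mu_j(i) = i$, so $i$ and $j$ are associate. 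Together with the trivial case $i = j$, this shows $i \sim j \Leftrightarrow F_i = F_j$, and transitivity of $\sim$ then follows from transitivity of equality of sets.

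The only step where anything substantial happens is the ``equal or disjoint'' dichotomy for fixed-point sets, but this has already been carried out in the excerpt (inside the proof of Corollary \ref{cor:order}), so the argument reduces to translating the definition of associate indices into statements about membership in the sets $F_i$; I do not anticipate a genuine obstacle, only the need to keep the degenerate cases (some of $i,j,k$ coinciding) explicitly in view.
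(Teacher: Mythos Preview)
Your proposal is correct and follows essentially the same approach as the paper: both take the reflexive closure of ``associate to'', invoke the equal-or-disjoint dichotomy for the $F_i$ established in the proof of Corollary \ref{cor:order}, and identify the resulting relation with $F_i = F_j$. Your write-up is simply more explicit about the biconditional ``$i$ associate to $j$ $\Leftrightarrow$ $F_i = F_j$'' (for $i\neq j$), which the paper compresses into ``thus the result follows''.
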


\begin{proof}
The equivalence relation is ``$i$ is associate to $j$ or $i=j$''. Since $i$ is a fixed point of $\mu_i$, then $i\in F_i$. Moreover, any two sets $F_i$ and $F_j$ are either equal or disjoint, thus the result follows.
\end{proof}

\begin{example}

$R_4$, the dihedral quandle of order $4$, whose multiplication table is displayed in Table \ref{table:3}, is a quandle of cyclic type of order $4$ with $2$ fixed points.

\begin{center}
\begin{tabular}{|c|c c c c|}
 \hline
 \emph{$\ast$} & \emph{1} & \emph{2} & \emph{3} & \emph{4}\\
 \hline
 \emph{1} & \emph{1} & \emph{3} & \emph{1} & \emph{3}\\
 \emph{2} & \emph{4} & \emph{2} & \emph{4} & \emph{2}\\
 \emph{3} & \emph{3} & \emph{1} & \emph{3} & \emph{1}\\
 \emph{4} & \emph{2} & \emph{4} & \emph{2} & \emph{4}\\
 \hline
\end{tabular}
\captionof{table}{$R_4$ multiplication table.}\label{table:3}
\end{center}

By Proposition \ref{prop:equivrel}, \text{``i is associate to j''} generates an equivalence relation on $R_4$, which is also a congruence relation on this set, as it respects the binary operation of the quandle. In Table \ref{table:4}, we see the quotient of $R_4$ by this congruence relation, which we denote by $\sim$. This quotient is clearly  isomorphic to $T_2$. In particular, $R_4$ is not simple since it admits a non-trivial quotient.

\begin{center}
\begin{tabular}{|c|c c|}
 \hline
 \emph{$\overline{\ast}$} & \emph{\{1,3\}} & \emph{\{2,4\}}\\
 \hline
 \emph{\{1,3\}} & \emph{\{1,3\}} & \emph{\{1,3\}}\\
 \emph{\{2,4\}} & \emph{\{2,4\}} & \emph{\{2,4\}}\\
 \hline
\end{tabular}
\captionof{table}{$R_4/\sim$ multiplication table.}\label{table:4}
\end{center}

\end{example}

\subsection{\boldmath Connected Quandles of Cyclic Type of Order $n$ with $f$ Fixed Points in the Range $n\geq 2f$ - First Properties and Example.}

From this point on, we are only working with connected quandles of cyclic type of order $n\geq 2f$. The two following propositions tell us whether a quandle of cyclic type of order $n\geq 2f$ is connected or not.

\begin{prop}\label{prop:n=2f-notconnected}
If $Q$ is a quandle of cyclic type of order $n$ with $f$ fixed points such that $n=2f$, $Q$ is not connected.
\end{prop}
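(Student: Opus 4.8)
The plan is to exploit the partition of the underlying set into fixed-point sets that was established in (the proof of) Corollary~\ref{cor:order}. Since $n = 2f \geq 2f$, that result applies: the sets $F_k$, $k \in \{1,\dots,n\}$, are pairwise either equal or disjoint, each has cardinality $f$, and their union is $\{1,\dots,n\}$. Because $n = 2f$, there are therefore exactly two distinct such sets; call them $A$ and $B$, so that $\{1,\dots,n\} = A \cup B$ with $A \cap B = \emptyset$, $|A| = |B| = f$, and for every $k$ the set $F_k$ equals $A$ or $B$.

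Next I would identify the non-singular cycles with these blocks. Fix $i \in A$; then $F_i = A$, and since $C_i$ is the complement of $F_i$ in $\{1,\dots,n\}$ with $|C_i| = n - f = f = |B|$, we get $C_i = B$. Symmetrically $C_j = A$ for every $j \in B$. Consequently, every $\mu_k$ with $k \in A$ fixes each element of $A$ pointwise (as $A = F_k$), while every $\mu_k$ with $k \in B$ maps $C_k = A$ bijectively onto itself. In either case $\mu_k(A) = A$, so $A$ is invariant under all permutations of the quandle.

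Finally I would invoke the definition of connectedness directly. If $Q$ were connected, then for $i \in A$ and any $j \in B$ there would exist $k_1, \dots, k_m$ with $j = \mu_{k_m} \circ \cdots \circ \mu_{k_1}(i)$; but the previous step shows the right-hand side lies in $A$, whereas $j \notin A$, a contradiction (note $A$ is neither empty nor all of $\{1,\dots,n\}$, since $f > 1$ gives $0 < f < 2f = n$). Hence $Q$ is not connected. I do not expect any real obstacle: everything reduces to the combinatorial fact, already available, that the fixed-point sets partition $\{1,\dots,n\}$ into exactly two blocks of size $f$; the only point needing a moment's care is matching $C_i$ with the complementary block via the count $|C_i| = n - f = f$.
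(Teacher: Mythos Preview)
Your proposal is correct and follows essentially the same approach as the paper: both arguments use (the proof of) Corollary~\ref{cor:order} to see that the fixed-point sets partition $\{1,\dots,n\}$ into exactly two blocks of size $f$ when $n=2f$, then identify each $C_k$ with the complementary block via the count $|C_k|=n-f=f$, and conclude that every $\mu_k$ preserves the blocks, which prevents connectedness. Your write-up is a bit more explicit in spelling out the invariance of the block $A$ and invoking the definition of connectedness, but the underlying idea is the same.
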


\begin{proof}
Suppose $Q$ is a quandle of cyclic type of order $n$ with $f$ fixed points such that $n=2f$ and let $i,j\in Q$ be two non-associate indices. Then $F_i\cup F_j=Q$. Moreover, since $n-f=2f-f=f$, for any $a\in Q$, if $F_i$ (respect., $F_j$) is the set of fixed points of $\mu_a$, then $C_a = F_j$ (respect., $C_a = F_i$). Then, for each $a\in Q$, $\mu_a(F_i)=F_i$ and $\mu_a(F_j)=F_j$. Therefore, we conclude that $Q$ is not connected.
\end{proof}

\begin{prop}
Every quandle of cyclic type of order $n$ with $f$ fixed points such that $n>2f$ is connected.
\end{prop}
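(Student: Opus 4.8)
The plan is to translate connectedness into a statement about the orbits of the group generated by the permutations $\mu_i$, and then to exploit the rigid combinatorial structure forced by Proposition~\ref{prop:assocind} and Corollary~\ref{cor:order}. For $a\in Q$, write $O_a$ for the set of points reachable from $a$ by a finite composition of the $\mu_i$'s, with the empty composition allowed so that $a\in O_a$. These sets partition $Q$, and the quandle is connected precisely when $O_a=Q$ for one, equivalently for every, $a$; in particular it suffices to produce a single $a$ with $O_a=Q$.

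First I would record the bookkeeping observation that, for any index $k$, $\mu_k$ fixes $a$ if and only if $a$ lies in the $\sim$-class of $k$, where $\sim$ is the equivalence relation of Proposition~\ref{prop:equivrel}; equivalently, $F_k$ is exactly the $\sim$-class of $k$, and $C_k=Q\setminus F_k$ is the union of all the other $\sim$-classes. This is immediate from Proposition~\ref{prop:assocind} together with the argument used in the proof of Corollary~\ref{cor:order} (if $\mu_s(k)=k$ then $F_s=F_k$).

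The core step is then short. Fix $a\in Q$ and pick any index $k$ that is not $\sim$-equivalent to $a$; such $k$ exists since there is more than one class. Then $a\notin F_k$, so $a\in C_k$, and since the restriction of $\mu_k$ to $C_k$ is a single cycle of length $n-f$, the whole of $C_k=Q\setminus F_k$ already lies in $O_a$. Now invoke Corollary~\ref{cor:order}: since $n>2f$ we have $n=cf$ with $c\geq 3$, so $\sim$ has at least three classes. Choose indices $k_1,k_2$ representing two distinct classes, both different from the class of $a$. Then $F_{k_1}\cap F_{k_2}=\emptyset$, hence $(Q\setminus F_{k_1})\cup(Q\setminus F_{k_2})=Q$, and both pieces lie in $O_a$ by the previous sentence. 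Therefore $O_a=Q$, and $Q$ is connected.

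I do not expect a genuine obstacle here. Once the identifications ``$\mu_k$ fixes $a\iff k\sim a$'' and ``$C_k$ is the union of the remaining classes'' are in place, the fact that a single $\mu_k$ sweeps out all of $C_k$ does almost all the work, and the hypothesis $c\geq 3$ is exactly what is needed for two complements $Q\setminus F_{k_1}$ and $Q\setminus F_{k_2}$ to cover $Q$; this is also precisely why the same argument correctly breaks down when $n=2f$, consistently with Proposition~\ref{prop:n=2f-notconnected}. The only point requiring a word of care is the standard remark that it is enough to exhibit one $a$ with $O_a=Q$, which follows from the $O_a$'s partitioning $Q$.
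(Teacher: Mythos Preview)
Your proof is correct and follows essentially the same approach as the paper. Both arguments use Corollary~\ref{cor:order} to obtain at least three $\sim$-classes and then exploit that each $C_k$ is a single $(n-f)$-cycle; the paper shows pairwise connectivity by finding, for any $i,j$, a class $F_{k_0}$ missing both (so $i,j\in C_{k_0}$ and some power of $\mu_{k_0}$ takes $i$ to $j$), while you equivalently cover $Q$ as $C_{k_1}\cup C_{k_2}$ inside a single orbit $O_a$.
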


\begin{proof}
Suppose $Q$ is a quandle of cyclic type of order $n$ with $f$ fixed points such that $n=cf$, with $c\geq 3$ by Corollary \ref{cor:order}. Let $i,j\in Q$, with $i\neq j$. If $i$ and $j$ are associate indices, then for any $k\notin F_i$, $i, j \in C_k$. Hence, there exists an integer $a\in\{1, \dots, n-f-1\}$ such that $\mu_k^a(i)=j$. Now, assume $i$ and $j$ are not associate indices. Since there are at least three distinct sets of associate indices, there is at least one set $F_{k_0}$ such that both $i$ and $j$ do not belong to $F_{k_0}$. Therefore $i, j \in C_{k_0}$ and so there exists an integer $a\in\{1, \dots, n-f-1\}$ such that $\mu_{k_0}^a(i)=j$. We conclude that $Q$ is connected.
\end{proof}

\begin{remark}
In the sequel, we assume the order $n$ of any quandle to be greater than $2f$, unless otherwise stated. In this condition, all our quandles of cyclic type are connected.
\end{remark}

We now use some of the equalities $\mu_{\mu_i(j)}=\mu_i\mu_j\mu_i^{-1}$ the permutations defining these quandles have to verify to derive a number of conditions these quandles have to satisfy in order to be cyclic.\par

\begin{theorem}\label{thr:thethr}
Consider a quandle of cyclic type of order $n$ with $f$ fixed points such that $n>2f$. Modulo isomorphism, its sequence of permutations satisfies the following conditions.

\begin{enumerate}
\item $\mu_n = (1\,\,2\,\,3\,\cdots\,n-f)(n-f+1)\cdots(n-1)(n)$;
\item if $h$ and $h'$ are associate indices then $\mu_h=\mu_{h'}^{l_{h,h'}}$, where $GCD(n-f, l_{h,h'})=1$, $1\leq l_{h,h'}<n-f$;
\item $\mu_k=\mu_n^k\mu_{n-f}\mu_n^{-k}$, for all $1\leq k\leq n-f$;
\item $\mu_{n-f}\mu_a\mu_{n-f}^{-1}=\mu_n^{\mu_{n-f}(a)}\mu_{n-f}\mu_n^{-\mu_{n-f}(a)}$, $\forall a\in F_n$;
\item $\mu_{n-f}^{-1}\mu_a\mu_{n-f}=\mu_n^{\mu_{n-f}^{-1}(a)}\mu_{n-f}\mu_n^{-\mu_{n-f}^{-1}(a)}$, $\forall a\in F_n$;
\item $\forall m\in\{1,\dots, n-f\}\setminus\{\mu_{n-f}^{-1}(n-f+1), \dots,\mu_{n-f}^{-1}(n)\}$, there exists an integer $1\leq k_m < n-f$ such that $\mu_n^{-\mu_{n-f}(m)}\mu_{n-f}\mu_n^m=\sigma\tau^{k_m}$, where $\sigma$ is a permutation of $F_{n-f}$ and $\tau$ is the cycle of length $n-f$ in $\mu_{n-f}$.
\end{enumerate}

\end{theorem}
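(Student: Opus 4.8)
\textbf{Proof proposal for Theorem \ref{thr:thethr}.}

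The plan is to exploit two structural facts already available: the basic identity $\mu_{\mu_i(j)} = \mu_i \mu_j \mu_i^{-1}$ from Theorem \ref{thm:equivdef}, and the fact (Corollary \ref{cor:order}, Proposition \ref{prop:assocind}) that the sets of fixed points partition $\{1,\dots,n\}$ into $c = n/f$ blocks of size $f$, with associate permutations sharing the same fixed-point set. First I would fix the normalization: up to relabelling the ground set (i.e.\ up to isomorphism, using the Proposition relating isomorphisms to conjugation of the $\mu_i$), I may assume that one of the $f$-element fixed-point blocks is $\{n-f+1, \dots, n\}$ and that the nonsingular cycle of $\mu_n$ is $(1\;2\;\cdots\;n-f)$; since $\mu_n$ fixes its own block pointwise, this gives item $1$ immediately.

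Item $2$ is a local statement about a single block of associate indices: if $h, h'$ are associate, then by Proposition \ref{prop:assocind} they have the same fixed-point set, hence the same non-fixed set $C$ of size $n-f$, and $\mu_h, \mu_{h'}$ are both $(n-f)$-cycles on $C$. I would then use self-distributivity inside this block — concretely, that $\mu_h$ and $\mu_{h'}$ must commute (because $\mu_h$ fixes $h'$, so $\mu_{\mu_h(h')} = \mu_{h'}$ forces $\mu_h \mu_{h'} \mu_h^{-1} = \mu_{h'}$), and two commuting $(n-f)$-cycles on the same $(n-f)$-element set are powers of one another — to conclude $\mu_h = \mu_{h'}^{\,l}$ with $\gcd(l, n-f) = 1$. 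Item $3$ is then a direct application of $\mu_{\mu_n(j)} = \mu_n \mu_j \mu_n^{-1}$: since $\mu_n$ restricted to $\{1,\dots,n-f\}$ is the cycle $(1\;2\;\cdots\;n-f)$, we get $\mu_n(n-f) = 1$ wait — more carefully, iterating $\mu_n$ starting from $n-f$ runs through $1, 2, \dots$, so $\mu_n^{k}(n-f) \equiv$ the appropriate index, and $\mu_{\mu_n^k(n-f)} = \mu_n^k \mu_{n-f}\mu_n^{-k}$; choosing $k$ so that $\mu_n^k(n-f) = k$ yields item $3$ after re-indexing. Items $4$ and $5$ come from applying the same conjugation identity \emph{twice}: $\mu_{\mu_{n-f}(a)} = \mu_{n-f}\mu_a \mu_{n-f}^{-1}$ on one side, and on the other side, since $a \in F_n$ and (by the partition of fixed points) $\mu_{n-f}(a)$ lies in $\{1,\dots,n-f\}$, rewriting $\mu_{\mu_{n-f}(a)}$ via item $3$ as a conjugate of $\mu_{n-f}$ by a power of $\mu_n$; equating the two expressions gives item $4$, and item $5$ is the analogous computation run with $\mu_{n-f}^{-1}$ in place of $\mu_{n-f}$ (equivalently, applying item $4$'s identity to $\mu_{n-f}^{-1}(a)$).

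For item $6$ I would start from item $4$ (or $5$), written as $\mu_n^{-\mu_{n-f}(a)}\,\mu_{n-f}\,\mu_a\,\mu_{n-f}^{-1}\,\mu_n^{\mu_{n-f}(a)} = \mu_{n-f}$, and use the freedom in the choice of $a$: as $a$ ranges over $F_n \setminus \{n\}$ (together with the possibility of pre/post-composing), $\mu_{n-f}(a)$ ranges over a specified subset of $\{1,\dots,n-f\}$ — precisely the complement of $\{\mu_{n-f}^{-1}(n-f+1),\dots,\mu_{n-f}^{-1}(n)\}$ is what remains after excluding the images under $\mu_{n-f}$ of the block $F_n$. The key manipulation is to isolate $\mu_n^{-\mu_{n-f}(m)}\,\mu_{n-f}\,\mu_n^{m}$ and observe that it conjugates $\mu_a$ (whose non-fixed cycle lies in a controlled set) in a way that forces it to be a permutation of $F_{n-f}$ composed with a power $\tau^{k_m}$ of the long cycle $\tau$ of $\mu_{n-f}$; the exponent $k_m$ being strictly between $0$ and $n-f$ (i.e.\ nonzero) follows because $m$ is taken outside the excluded set, so the map genuinely ``rotates'' rather than fixes the long cycle. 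The main obstacle I anticipate is exactly the bookkeeping in item $6$: keeping precise track of which indices lie in which block, and verifying that $k_m \neq 0$ for every admissible $m$ — this requires combining the fixed-point partition with the explicit cycle structure of $\mu_n$ and $\mu_{n-f}$, and is the one place where a careless off-by-one in the index arithmetic would break the argument. Items $1$ through $5$ I expect to be routine once the normalization in item $1$ is in place.
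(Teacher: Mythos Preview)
Your outline for items 1--3 tracks the paper's proof and is fine. The substantive gap is in items 4 and 5. You assert that for $a \in F_n$, ``by the partition of fixed points $\mu_{n-f}(a)$ lies in $\{1,\dots,n-f\}$'', but this does not follow from the partition alone: the partition gives $F_n \cap F_{n-f} = \emptyset$, hence $a \in C_{n-f}$, but $\mu_{n-f}$ could a priori send $a$ to another element of $F_n \subset C_{n-f}$. The paper isolates this as a separate lemma (Lemma~\ref{lemma:ups}): if $\mu_{n-f}(a) \in F_n$ for some $a \in F_n$, then $\mu_k(a) = \mu_n^k\mu_{n-f}\mu_n^{-k}(a) = \mu_n^k\mu_{n-f}(a) = \mu_{n-f}(a)$ is the same for every $k \in \{1,\dots,n-f\}$, which forces associate permutations among $\mu_1,\dots,\mu_{n-f}$ to coincide; combining with the case of some $b\in F_n$ with $\mu_{n-f}(b)\notin F_n$ (which forces them all to be distinct) and then with $n-f>f$ yields a contradiction. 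Without this step you cannot apply item 3 to $\mu_{\mu_{n-f}(a)}$, and items 4--5 do not go through.

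For item 6 your sketch diverges from the paper and does not clearly land. You propose to start from item 4 (which concerns $a \in F_n$) and ``isolate'' $\mu_n^{-\mu_{n-f}(m)}\mu_{n-f}\mu_n^m$, but item 6 is about $m \in \{1,\dots,n-f\}$ with $\mu_{n-f}(m)$ also in $\{1,\dots,n-f\}$ --- the excluded set consists of the \emph{preimages} $\mu_{n-f}^{-1}(F_n)$, not the images as you wrote. The paper instead computes $\mu_{\mu_{n-f}(m)}$ two ways via item 3 (applicable since both $m$ and $\mu_{n-f}(m)$ lie in $\{1,\dots,n-f\}$), deduces that $\mu_n^{-\mu_{n-f}(m)}\mu_{n-f}\mu_n^m$ \emph{commutes} with $\mu_{n-f}$, and then identifies the centralizer of $\mu_{n-f}$ in $S_n$ by counting: $|C_{S_n}(\mu_{n-f})| = (n-f)\cdot f!$, realized exactly as $\{\sigma\tau^k : \sigma \in S_{F_{n-f}},\ 0 \le k < n-f\}$. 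Your claim that the expression ``conjugates $\mu_a$ in a way that forces'' the desired form does not substitute for this centralizer computation.
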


\begin{proof}
%\begin{absolutelynopagebreak}
\textcolor{white}{.}\par
\textit{1.} We can assume that $\mu_n$ is given by $(1\,\,2\,\,3\,\cdots\,n-f)(n-f+1)\cdots(n-1)(n)$ without loss of generality. If necessary, we may relabel the indices. {\bf \boldmath This expression for $\mu_n$ will be assumed in the sequel} - except for Subsection \ref{subsect:disconnected-range-f+2-n-2f}.\\ \par
%\end{absolutelynopagebreak}
\textit{2.} Suppose $h$ and $h'$ are associate indices and let $F_h=\{h, h', g_h^1, \dots, g_h^{f-2}\}$. Therefore, we have that $\mu_h=(h_1\,\dots\,h_{n-f})(h)(h')(g_h^1)\cdots(g_h^{f-2})$ and $\mu_{h'}=(h'_1\,\dots\,h'_{n-f})(h)(h')(g_h^1)\cdots(g_h^{f-2})$, and hence \[\mu_{h'}=\mu_{\mu_h(h')}=\mu_h\mu_{h'}\mu_h^{-1}\Leftrightarrow \mu_{h'}\mu_h=\mu_h\mu_{h'}\Leftrightarrow\]\[\Leftrightarrow(h_1 \dots h_{n-f})(h'_1 \dots h'_{n-f})=(h'_1 \dots h'_{n-f})(h_1 \dots h_{n-f}),\] that is, the two cycles $(h_1\,\dots\,h_{n-f})$ and $(h'_1\,\dots\,h'_{n-f})$ commute in $S_{\{h_1,\dots,h_{n-f}\}}= S_{\{h'_1,\dots,h'_{n-f}\}}$. Thus, $\mu_h=\mu_{h'}^{l_{h,h'}}$ (see \cite{Rotman}, for instance), where $l_{h,h'}$ satisfies $GCD(n-f, l_{h,h'})=1$, $1\leq l_{h,h'}<n-f$ (otherwise $\mu_h$ would not have a cycle of length $n-f$).\\

\textit{3.} First, we note that $\mu_1=\mu_{\mu_n(n-f)}=\mu_n\mu_{n-f}\mu_n^{-1}$. If $\mu_k=\mu_n^k\mu_{n-f}\mu_n^{-k}$ then \[\mu_{k+1}=\mu_{\mu_n(k)}=\mu_n\mu_k\mu_n^{-1}=\mu_n\mu_n^k\mu_{n-f}\mu_n^{-k}\mu_n^{-1}=\mu_n^{k+1}\mu_{n-f}\mu_n^{-(k+1)},\] (where we read the free indices modulo $n-f$), whence we proved \textit{3.} by induction.\\

\textit{4.} Let $a\in F_n$ and assume $\mu_{n-f}(a)\notin F_n$. Let $i\notin F_n$. On one hand, we have, by assertions \textit{2.} and \textit{3.},
\begin{equation}\label{eq:4}
\mu_{\mu_i(a)}=\mu_i\mu_a\mu_i^{-1}=\mu_n^i\mu_{n-f}\mu_n^{-i}\mu_a\mu_n^i\mu_{n-f}^{-1}\mu_n^{-i}=\mu_n^i\mu_{n-f}\mu_a\mu_{n-f}^{-1}\mu_n^{-i}.
\end{equation}
On the other hand, again by assertion \textit{3.}, \[\mu_i(a)=\mu_n^{i}\mu_{n-f}\mu_n^{-i}(a)=\mu_n^{i}\mu_{n-f}(a)=\mu_n^{i}\mu_n^{\mu_{n-f}(a)}(n-f)=\mu_n^{i+\mu_{n-f}(a)}(n-f),\] which implies that
\begin{equation}\label{eq:5}
\mu_{\mu_i(a)}=\mu_{\mu_n^{i+\mu_{n-f}(a)}(n-f)}=\mu_n^i\mu_n^{\mu_{n-f}(a)}\mu_{n-f}\mu_n^{-\mu_{n-f}(a)}\mu_n^{-i}
\end{equation}
Combining \ref{eq:4} and \ref{eq:5}, we get \[\mu_{n-f}\mu_a\mu_{n-f}^{-1}=\mu_n^{\mu_{n-f}(a)}\mu_{n-f}\mu_n^{-\mu_{n-f}(a)}.\] We now prove the following lemma, which completes the proof of assertion \textit{4.}.

\begin{lemma}\label{lemma:ups}
Given $a\in F_n$, $\mu_{n-f}(a)\notin F_n$.
\end{lemma}

\begin{proof}
Suppose $\mu_{n-f}(a)\in F_n$. Then, for $1\leq k\leq n-f$, \[\mu_k(a)=\mu_n^k \mu_{n-f}\mu_n^{-k}(a)=\mu_n^k \mu_{n-f}(a)=\mu_{n-f}(a).\]
This would force the pairs of associate permutations from $\mu_1$ to $\mu_{n-f}$ to be equal to each other. In fact, by assertion \textit{2.}, if two associate permutations have the same image at a point belonging to their non-singular cycles, these permutations have to be the same. Suppose, now, that for a certain index $b\in F_n$, $\mu_{n-f}(b)\notin F_n$. Then, for $1\leq k\leq n-f$, \[\mu_k(b)=\mu_n^k \mu_{n-f}\mu_n^{-k}(b)=\mu_n^k \mu_{n-f}(b)=\mu_{n-f}(b)+k,\] which would force the pairs of permutations whose indices are associate from $\mu_1$ to $\mu_{n-f}$ to be all different from each other, which is a contradiction. Therefore, $\mu_{n-f}(F_n)=F_n$. But $\mu_{n-f}$ does not fix any element from $F_n$ since $F_n \cap F_{n-f}=\emptyset$. Then, this implies that $\mu_{n-f}$ has a cycle of length at most $f$, which is again a contradiction, since $n-f>f$. Hence, $\mu_{n-f}(a)\notin F_n$.
\end{proof}

\textit{5.} Let $a\in F_n$ and let $i\notin F_n$ be the index such that $\mu_{n-f}(i)=a$. Note that otherwise $i$ would take up the role of $a$ in Lemma \ref{lemma:ups} and $a$ would not belong to $F_n$. Then, \[\mu_a=\mu_{\mu_{n-f}(i)}=\mu_{n-f}\mu_i\mu_{n-f}^{-1}=\mu_{n-f}\mu_n^i\mu_{n-f}\mu_n^{-i}\mu_{n-f}^{-1}.\] Since $i=\mu_{n-f}^{-1}(a)$, we conclude that $\mu_{n-f}^{-1}\mu_a\mu_{n-f}=\mu_n^{\mu_{n-f}^{-1}(a)}\mu_{n-f}\mu_n^{-\mu_{n-f}^{-1}(a)}$.\\

\textit{6.} Let $m$ be any index belonging to the set $\{1,\dots,n-f\}$ such that the index $\mu_{n-f}(m)$ belongs to the set $\{1, \dots, n-f\}\setminus\{\mu_{n-f}(n), \dots, \mu_{n-f}(n-f+1)\}$. Then, by assertion \textit{3.},  \[\mu_{\mu_{n-f}(m)}=\mu_{n-f}\mu_m\mu_{n-f}^{-1}=\mu_{n-f}\mu_n^m\mu_{n-f}\mu_n^{-m}\mu_{n-f}^{-1}.\] Since we also have $\mu_{\mu_{n-f}(m)}=\mu_n^{\mu_{n-f}(m)}\mu_{n-f}\mu_n^{-\mu_{n-f}(m)}$, then  \[\mu_{n-f}\mu_n^m\mu_{n-f}\mu_n^{-m}\mu_{n-f}^{-1}=\mu_n^{\mu_{n-f}(m)}\mu_{n-f}\mu_n^{-\mu_{n-f}(m)},\] which is equivalent to having \[(\mu_n^{-\mu_{n-f}(m)}\mu_{n-f}\mu_n^m)\mu_{n-f}=\mu_{n-f}(\mu_n^{-\mu_{n-f}(m)}\mu_{n-f}\mu_n^m),\] that is, $\mu_n^{-\mu_{n-f}(m)}\mu_{n-f}\mu_n^m$ and $\mu_{n-f}$ commute in $S_n$. The number of elements in the centralizer of $\mu_{n-f}$ in $S_n$, $|C_{S_n}(\mu_{n-f})|$, is given by \[|C_{S_n}(\mu_{n-f})|=\frac{|S_n|}{|\mu_{n-f}^{S_n}|},\qquad \textrm{see \cite{Rotman}, for instance},\] where $|\mu_{n-f}^{S_n}|$ denotes the number of elements of $S_n$ with the same pattern as $\mu_{n-f}$. In fact, we have \[|\mu_{n-f}^{S_n}|=\frac{n(n-1)\dots(f+1)}{n-f}=\frac{n!}{(n-f)f!},\] and since $|S_n|=n!$, we conclude that $|C_{S_n}(\mu_{n-f})|=(n-f)f!$. However, we know exactly what these $(n-f)f!$ permutations are. Let $\tau$ be the cycle of length $n-f$ of $\mu_{n-f}$. Indeed, $\tau^k$ commutes with $\mu_{n-f}$, $\forall k\in\{1, \dots,n-f\}$. Moreover, any permutation of the $f$ fixed points of $\mu_{n-f}$ commutes with $\mu_{n-f}$. The former type of permutation $\tau^k$ only moves elements within $C_{n-f}$ whereas the latter type of permutation only moves elements within $F_{n-f}$. Composing permutations from these two commuting types of permutations, we get a total of $(n-f)f!$ permutations commuting with $\mu_{n-f}$, which is precisely the number of permutations we found before. Therefore, we may conclude that $\forall m\in\{1,\dots, n-f\}\setminus\{\mu_{n-f}^{-1}(n-f+1), \dots,\mu_{n-f}^{-1}(n)\}$, there exists an integer $1\leq k_m<n-f$ such that $\mu_n^{-\mu_{n-f}(m)}\mu_{n-f}\mu_n^m=\sigma\tau^{k_m}$, where $\sigma$ is a permutation of $F_{n-f}$ and $\tau$ is the cycle of length $n-f$ in $\mu_{n-f}$.

\end{proof}

\begin{corollary}\label{cor:thethr}
Assertions $1., 2., $ and $3.$ in Theorem \ref{thr:thethr} are still valid if $n=2f$.
\end{corollary}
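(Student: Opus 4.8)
The plan is to observe that the proof of Theorem \ref{thr:thethr} establishes assertions $1.$, $2.$, and $3.$ using only two ingredients: the freedom to relabel the underlying set, and Proposition \ref{prop:assocind} (together with the corollary that associate permutations share their fixed-point sets). Neither ingredient requires the strict inequality $n > 2f$; both are available as soon as $n \geq 2f$, hence in particular when $n = 2f$. By contrast, the hypothesis $n > 2f$ — and the connectedness of the quandle, which in fact fails for $n = 2f$ by Proposition \ref{prop:n=2f-notconnected} — enters only in the proofs of assertions $4.$, $5.$, and $6.$, most visibly through Lemma \ref{lemma:ups}, whose concluding contradiction needs $n - f > f$. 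So the proof amounts to rereading the arguments for $1.$–$3.$ and checking that each step survives the substitution $n = 2f$.

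Concretely, assertion $1.$ is pure relabeling: one singles out an index, calls it $n$, and relabels so that its unique non-singular cycle (of length $n-f$) becomes $(1\,2\,\cdots\,n-f)$ and its $f$ fixed points become $n-f+1, \dots, n$; the size of $n$ relative to $f$ plays no role. For assertion $2.$, take associate indices $h$ and $h'$; since $n = 2f \geq 2f$, Proposition \ref{prop:assocind} applies and gives $F_h = F_{h'}$, so $C_h = C_{h'}$ is a set of $n - f = f$ elements. As $\mu_h$ fixes $h'$, the defining relation gives $\mu_{h'} = \mu_{\mu_h(h')} = \mu_h \mu_{h'} \mu_h^{-1}$, i.e.\ $\mu_h$ and $\mu_{h'}$ commute; restricted to $C_h = C_{h'}$ they are two $f$-cycles that commute, hence powers of one another, so $\mu_h = \mu_{h'}^{l_{h,h'}}$, and the requirement that $\mu_h$ still have a cycle of full length $n-f$ forces $l_{h,h'}$ to be coprime to $n-f$ with $1 \leq l_{h,h'} < n-f$ — word for word as in the original proof. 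For assertion $3.$, assertion $1.$ gives $\mu_n(n-f) = 1$, whence $\mu_1 = \mu_{\mu_n(n-f)} = \mu_n \mu_{n-f} \mu_n^{-1}$ by Theorem \ref{thm:equivdef}, and the induction $\mu_{k+1} = \mu_{\mu_n(k)} = \mu_n \mu_k \mu_n^{-1}$ (free indices read modulo $n-f$) yields $\mu_k = \mu_n^k \mu_{n-f} \mu_n^{-k}$ for $1 \leq k \leq n-f$; the boundary value $k = n-f$ is consistent because $\mu_n^{n-f}$ fixes both $\{1, \dots, n-f\}$ and $F_n$ pointwise and so is the identity.

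The only point that needs care — and the closest thing to an obstacle — is confirming that Proposition \ref{prop:assocind} is genuinely valid at $n = 2f$, not merely for $n > 2f$. Its proof reaches a contradiction from a cycle of length $l$ with $1 < l \leq f-1$ that would also have to satisfy $l = n - f$; for $n = 2f$ this reads $f \leq f - 1$, still absurd, so the proposition does hold. Once this is in hand, nothing in the proofs of $1.$–$3.$ is disturbed and the corollary follows. It is worth noting why assertions $4.$–$6.$ are excluded from the statement: Lemma \ref{lemma:ups} collapses when $n - f = f$, and the derivations of $4.$–$6.$ additionally invoke connectedness, which is unavailable when $n = 2f$.
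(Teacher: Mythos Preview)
Your argument is correct and is exactly the natural way to fill in the paper's omitted proof: re-examine the proofs of assertions $1.$, $2.$, and $3.$ in Theorem~\ref{thr:thethr} and observe that they use only relabeling, the quandle relation $\mu_{\mu_i(j)}=\mu_i\mu_j\mu_i^{-1}$, and Proposition~\ref{prop:assocind}, all of which require only $n\geq 2f$. One small inaccuracy in your closing remark: the proofs of assertions $4.$--$6.$ do not actually invoke connectedness; what breaks at $n=2f$ is precisely Lemma~\ref{lemma:ups}, whose final contradiction needs the strict inequality $n-f>f$.
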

\begin{proof}
Omitted.
\end{proof}

This allows us to classify quandles of cyclic type of order $2f$ with $f$ fixed points

\begin{corollary}\label{cor:n=2f}
For any integer $f>1$, there is only one quandle of cyclic type of order $2f$ with $f$ fixed points, up to isomorphism. Moreover, such quandle is not connected.
\end{corollary}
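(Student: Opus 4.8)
The plan is to leverage Corollary \ref{cor:thethr} together with assertion \textit{2.} of Theorem \ref{thr:thethr}, which remain valid for $n=2f$. First I would observe that when $n=2f$ we have $n-f=f$, so each permutation $\mu_k$ consists of a single $f$-cycle on $C_k$ and $f$ fixed points on $F_k$, and by Corollary \ref{cor:order} the sets of fixed points partition $Q$ into exactly two blocks of size $f$; after relabelling (as in assertion \textit{1.}) these blocks are $\{1, \dots, f\}$ and $\{f+1, \dots, 2f\}$. By Proposition \ref{prop:assocind} and its corollary, associate permutations share fixed point sets, so $\mu_1, \dots, \mu_f$ all have fixed point set $\{1, \dots, f\}$ (hence non-singular cycle on $\{f+1, \dots, 2f\}$) and $\mu_{f+1}, \dots, \mu_{2f}$ all have fixed point set $\{f+1, \dots, 2f\}$ (hence non-singular cycle on $\{1, \dots, f\}$).

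Next I would pin down the actual cycles. Assertion \textit{1.} of Corollary \ref{cor:thethr} gives $\mu_{2f} = (1\ 2\ \cdots\ f)(f+1)\cdots(2f)$. Assertion \textit{3.}, $\mu_k = \mu_n^k \mu_{n-f}\mu_n^{-k}$ for $1\leq k \leq n-f=f$, together with the fact that $\mu_{2f}$ acts trivially on $\{f+1,\dots,2f\}$, forces $\mu_1 = \mu_2 = \cdots = \mu_f$: conjugating $\mu_f$ by a power of $\mu_{2f}$ does nothing since $\mu_{2f}$ fixes every point moved by $\mu_f$ and permutes only the fixed points of $\mu_f$ (among themselves, trivially). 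Symmetrically, using the relation in the other direction (or re-deriving assertion \textit{3.} with the roles of the two blocks swapped, which is legitimate up to the relabelling freedom), $\mu_{f+1} = \cdots = \mu_{2f}$. So the quandle is determined by the single $f$-cycle $\tau := \mu_1|_{\{f+1,\dots,2f\}}$, once we fix $\mu_{f+1} = \cdots = \mu_{2f} = (1\ 2\ \cdots\ f)$.

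Then I would show $\tau$ can be taken to be $(f+1\ f+2\ \cdots\ 2f)$ up to isomorphism. The remaining freedom is conjugation by a bijection $\alpha$ of $Q$ that preserves the partition into the two blocks and commutes appropriately; concretely, choosing $\alpha$ to be the identity on $\{1,\dots,f\}$ and a suitable relabelling of $\{f+1,\dots,2f\}$ turns any $f$-cycle $\tau$ on that block into the standard one, while not disturbing $\mu_{f+1}=\cdots=\mu_{2f}=(1\ 2\ \cdots\ f)$ since those permutations act trivially on $\{f+1,\dots,2f\}$. One must check this $\alpha$ is genuinely a quandle isomorphism, i.e. that the conjugated system still satisfies $\mu'_{\mu'_i(j)} = \mu'_i\mu'_j(\mu'_i)^{-1}$; but this is automatic because $\alpha$-conjugation carries quandle structures to quandle structures (Proposition in Section 2.1), so it suffices that $\alpha$ maps the permutations to the claimed ones. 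Finally, non-connectedness is exactly Proposition \ref{prop:n=2f-notconnected}.

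The main obstacle I anticipate is the bookkeeping in the second step: verifying carefully that assertion \textit{3.} really does collapse $\mu_1, \dots, \mu_f$ to a common permutation (and likewise for the other block), since assertion \textit{3.} as stated is asymmetric in the two blocks — it is phrased in terms of $\mu_{n-f}$ and $\mu_n$ — and one needs to be sure the symmetric statement for the block $\{f+1,\dots,2f\}$ is available, either by invoking the relabelling freedom in assertion \textit{1.} or by re-running the induction in the proof of assertion \textit{3.} with indices chosen from the other block. Once that symmetry is in hand, everything else is a short verification.
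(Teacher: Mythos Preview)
Your proposal is correct and follows the same route as the paper: fix $\mu_{2f}=(1\ 2\ \cdots\ f)$ via assertion~\textit{1.}, identify the two fixed-point blocks, collapse each block of associate permutations to a single permutation via the conjugation relation, exhibit an explicit relabelling of $\{f+1,\dots,2f\}$ as the isomorphism, and cite Proposition~\ref{prop:n=2f-notconnected} for non-connectedness.

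Two small remarks. First, the paper opens with an explicit existence step---writing down the candidate permutations and checking $\mu_{\mu_i(j)}=\mu_i\mu_j\mu_i^{-1}$---which your plan omits; you should add this one-line verification, since ``only one up to isomorphism'' includes ``at least one''. Second, regarding your anticipated obstacle: the paper does \emph{not} re-invoke the relabelling freedom for the second block (that freedom is indeed already spent fixing $\mu_{2f}$). Instead it applies the raw quandle relation $\mu'_{\mu'_1(j)}=\mu'_1\mu'_j(\mu'_1)^{-1}$ directly, using that the already-determined $\mu'_1$ is an $f$-cycle on $\{f+1,\dots,2f\}$ and commutes with each $\mu'_j$ for $j>f$. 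This is exactly your ``re-run the induction from the other block'' option, and it resolves the asymmetry cleanly without any further relabelling.
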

\begin{proof}
Let $f$ be as in the statement. We first note that, should it exist, the indicated quandle is not connected via Proposition \ref{prop:n=2f-notconnected}. We will next prove ($1.$) that  there is such a quandle;  and then ($2.$) that any such quandle is isomorphic to the one in $1.$

\begin{enumerate}
\item Consider the sequence of permutations $$\mu_{i} = (1)(2) \cdots (f) (f+1\quad f+2\quad f+3 \quad \cdots \quad 2f) \qquad \text{ for } i = 1, 2, \dots , f ;$$ $$\mu_{j} = (f+1)(f+2) \cdots (2f) (1\quad 2\quad 3 \quad \cdots \quad f) \qquad \text{ for } j = f+1, f+2, \dots , 2f .$$ Note that $\mu_i$'s and $\mu_j$'s commute among themselves and with one another, since they are either equal or move points from disjoint sets. Then, for any $i, i' \in \{ 1, 2, \dots , f \}$ and $j, j' \in \{ f+1, f+2, \dots , 2f  \}$, we have $$\mu_i(i') = i' \Longrightarrow \mu_{i'} = \mu_i\mu_{i'}\mu_i^{-1} = \mu_{i}$$
$$\mu_i(j) = j+1\,  (\text{with } 2f+1 = f + 1) \Longrightarrow \mu_{j+1} = \mu_i\mu_{j}\mu_i^{-1} = \mu_{j}$$
$$\mu_j(j') = j' \Longrightarrow \mu_{j'} = \mu_j\mu_{j'}\mu_j^{-1} = \mu_{j}$$
$$\mu_j(i) = i+1\,   (\text{with } f+1 = f) \Longrightarrow \mu_{i+1} = \mu_j\mu_{i}\mu_j^{-1} = \mu_{i}.$$ Therefore, the indicated sequence of permutations defines a quandle. Moreover, this is a quandle of cyclic type of order $2f$ with $f$ fixed points.

\item Now consider a quandle of cyclic type of order $2f$ and $f$ fixed points, along with its permutations, $\mu'_i$ for $i = 1, 2, \dots , 2f$. According to $1.$ in Theorem \ref{thr:thethr} and Corollary \ref{cor:thethr} $$\mu'_{2f} = (f+1)(f+2) \cdots (2f) (1\quad 2\quad 3 \quad \cdots \quad f) ,$$ whose set of fixed points is $$F_{2f} = \{  f+1, f+2, \dots , 2f \} = F_{2f-1} = \cdots = F_{f+1}.$$ There are two distinct sets of fixed points (see proof of Proposition \ref{prop:n=2f-notconnected}), so  the other one is $$F_1 = \{  1, 2, \dots , f  \} = F_2 = \cdots = F_f .$$ Then $$\mu'_1 = (1)(2) \cdots (f) (g_{1}\quad g_{2}\quad g_{3} \quad \cdots \quad g_{f}) ,$$ where $(g_{1}\quad g_{2}\quad g_{3} \quad \cdots \quad g_{f})$ is a cyclic permutation of $\{  f+1, f+2, \dots , 2f \}$. For any $i\in \{  1, 2, \dots , f  \}$, we have $$\mu'_{2f}(i) = i+1 \, (\text{with } f+1 = 1) \, \Longrightarrow \mu'_{i+1} = \mu'_{2f}\mu'_i{\mu'_{2f}}^{-1} = \mu'_i .$$ Therefore, $$\mu'_i = \mu'_1 = (1)(2) \cdots (f) (g_{1}\quad g_{2}\quad g_{3} \quad \cdots \quad g_{f}) \qquad \text{ for any } i\in \{  1, 2, \dots , f  \} .$$ Also, for any $i\in \{  1, 2, \dots , f  \}$, we have $$\mu'_{1}(f+i) = f+i+1 \, (\text{with } 2f + 1 = f + 1) \, \Longrightarrow \mu'_{f+i+1} = \mu'_{1}\mu'_{f+i}{\mu'_{1}}^{-1} = \mu'_{f+i} .$$ Therefore, $$\mu'_{f+i} = \mu'_{2f}  = (f+1)(f+2) \cdots (2f) (1\quad 2\quad 3 \quad \cdots \quad f) \qquad \text{ for any } i\in \{  1, 2, \dots , f  \} .$$ Finally, consider the permutation, $\alpha$, of $\{  1, 2, \dots, f, f+1, \dots , 2f \}$ given by $$\alpha = (1)(2)\cdots (f) (g_1\,\,\,  f+1) (g_2\,\,\, f+2) \cdots (g_f\,\,\,  2f) ,$$   where, in case $g_i = f+i$,  $(g_i\,\,\,  f+i)$  is to be read $(g_i)$, a fixed point. Then, for any $i\in \{ 1, 2, \dots , f, f+1, \dots , 2f  \}$, $$\mu'_{\alpha (i)} = \alpha\mu_i\alpha^{-1} .$$ Thus, $\alpha$ is a quandle isomorphism between the quandle here and the quandle in $1.$
\end{enumerate}

\end{proof}

The proof of Corollary \ref{cor:n=2f} establishes Assertion $2.(b)$ in Theorem \ref{thm:main1}.

\subsection{\boldmath Quandles of Cyclic Type of Order $n$ with $f$ Fixed Points in the Range $f+2 \leq n \leq 2f$ are not Connected.}\label{subsect:disconnected-range-f+2-n-2f}

\begin{theorem}
Cyclic quandles of order $n$ with $f$ fixed points such that $f+2\leq n\leq 2f$ are not connected.
\end{theorem}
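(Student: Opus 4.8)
The plan is to argue by contradiction: assume $Q$ is a connected quandle of cyclic type of order $n$ with $f$ fixed points and $f+2\le n\le 2f$, and produce a proper nonempty subset of $Q$ invariant under the inner group $\mathrm{Inn}(Q)=\langle \mu_1,\dots,\mu_n\rangle$. Since connectedness is exactly transitivity of this group on $Q$, any such set yields a contradiction. (For $n=2f$ this is already Proposition \ref{prop:n=2f-notconnected}, so the substance is $f+2\le n\le 2f-1$, where $2\le n-f\le f-1$.)

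First I would extract the half of Proposition \ref{prop:assocind} that survives without the hypothesis $n\ge 2f$: if $p\in F_k$ then, for every $g\in F_k\setminus\{p\}$, the identity $\mu_p(g)=\mu_{\mu_k(p)}(g)=\mu_k\mu_p\mu_k^{-1}(g)=\mu_k(\mu_p(g))$ shows $\mu_p(g)\in F_k\setminus\{p\}$, so $\mu_p$ permutes $F_k$. What one loses for $n\le 2f$ is the further conclusion that $\mu_p$ is the identity on $F_k$: now $\mu_p|_{F_k}$ is only known to be either the identity or a single $(n-f)$-cycle together with $f-(n-f)$ fixed points. The upshot is that each $F_k$ is a subquandle of $Q$ of order $f$, with permutations $\mu_p|_{F_k}$ for $p\in F_k$, and $\mu_k|_{F_k}=\mathrm{id}$. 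I would also record the dichotomy read off exactly as in the proof of Corollary \ref{cor:order}: for $p\in F_k$, either $F_p=F_k$ or $C_p\subseteq F_k$.

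Next, using the quandle relation in the form $F_{\mu_i(j)}=\mu_i(F_j)$, the relation $i\approx j:\Leftrightarrow F_i=F_j$ is a quandle congruence, so its classes are blocks for $\mathrm{Inn}(Q)$; by connectedness they all have a common size $m$, and since $j\in F_j$ each class lies inside a fixed set, forcing $m\le f<n$ and hence at least two classes. It now suffices to show that every $\mu_i$ fixes every $\approx$-class $A$ setwise, since that is impossible when the action is transitive on two or more blocks. If $A$ meets $F_i$ this is clear ($\mu_i$ fixes a point of the block $A$), and if $A=C_i$ it is clear as well ($\mu_i(C_i)=C_i$). The remaining possibility is $A\subsetneq C_i$; here I would pass to the subquandle $F_A$ ($=F_x$ for any $x\in A$), which contains $A$. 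If $i\in F_A$, then $\mu_i|_{F_A}$ is one of the permutations of this subquandle, and, restricting $\mu_{\mu_i(j)}=\mu_i\mu_j\mu_i^{-1}$ to $F_A$ and using that the indices $x$ with $\mu_x|_{F_A}=\mathrm{id}$ are precisely those in $A$, one gets that $\mu_i|_{F_A}$ stabilizes $A$, i.e. $\mu_i(A)=A$. If $i\notin F_A$, one uses that in a connected quandle of cyclic type each element is moved by exactly $n-f$ of the permutations — together with the fact that $F_A$ is then disjoint from every $C_x$, $x\in A$ — to force the associate relation on $Q$ to be symmetric; combined with the dichotomy and the arithmetic constraint $f\nmid n$ (valid because $f+2\le n\le 2f-1$), this configuration is ruled out.

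The step I expect to be the main obstacle is precisely this last configuration, $A\subsetneq C_i$ with $i\notin F_A$: here the fixed-set congruence alone does not suffice because the $(n-f)$-cycles "straddle" the fixed sets, and closing the argument needs a careful interplay between the symmetry of the associate relation, the dichotomy of the second step, and the failure of $f\mid n$ — plausibly via an induction on $n$ (or on $f$), feeding the smaller quandle of cyclic type carried by the subquandle $F_k$ back into the statement.
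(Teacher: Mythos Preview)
Your congruence $i\approx j:\Leftrightarrow F_i=F_j$ is a genuine structural idea, different from the paper's route, and the easy cases ($A\cap F_i\ne\emptyset$, $A=C_i$, and $A\subsetneq C_i$ with $i\in F_A$) are handled correctly. But there is a real gap, and it is not quite where you locate it.

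The target ``every $\mu_i$ fixes every $\approx$-class setwise'' need not be attainable directly. When the common class size $m$ satisfies $1<m<n-f$, your own block analysis forces each class to lie either in $F_i$ or in $C_i$; the classes inside $C_i$ are then blocks for the $(n-f)$-cycle and are permuted in a single $(n-f)/m$-cycle. Hence the quotient $Q/\!\approx$ is again a quandle of cyclic type, of order $\bar n=n/m$ with $\bar f=f/m$ fixed points, still in the range $\bar f+2\le \bar n\le 2\bar f$, and it is connected (as a quotient of a connected quandle). So in this regime some $\mu_i$ genuinely moves some class, and the argument must proceed by induction on the \emph{quotient}, not by exhibiting an invariant class directly. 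Your proposed induction on the subquandle $F_k$ does not work as stated: $\mu_k|_{F_k}$ is the identity, so $F_k$ never has constant profile and is not a quandle of cyclic type to which the statement applies. To anchor the induction you also need the case $m=1$; here your ``symmetry of the associate relation'' remark is on target (if all $F_i$ are distinct then $p\in F_k$ forces $C_p\subseteq F_k$, hence $C_k\subseteq F_p$ and $k\in F_p$), and pushing it shows the relation $p\in F_k$ makes $Q$ complete multipartite with parts of size $n-f+1$, each part being $\mathrm{Inn}(Q)$-invariant --- contradicting connectedness. The bare condition $f\nmid n$ is not what closes the case.

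For comparison, the paper argues directly with no induction: it uses $n-f<f$ to find a point $a_0\in F_n$ fixed by every $\mu_i$ with $i\in C_n$ (via the conjugation identity along the cycle of $\mu_n$), and then shows, by an iterative saturation inside the subquandle $F_n$, that the connected component of $a_0$ in $F_n$ is already invariant under all of $\mathrm{Inn}(Q)$. Your quotient-and-induction scheme can be made to work and is arguably cleaner conceptually, but as written the hard case is not closed and the inductive object is misidentified.
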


\begin{proof}
By Proposition \ref{prop:n=2f-notconnected}, we know this is true for $n=2f$. Now, let $Q$ be a cyclic quandle of order $n$ with $f$ fixed points such that $f+2\leq n\leq 2f-1$. We assume, without loss of generality, that $\mu_n=(1\,\,\,\,2 \dots \,\,\,\,n-f)(n-f+1)(n-f+2)\dots (n)$, see Assertion $1.$ in Theorem \ref{thr:thethr}. Given $j,k\in F_n = \{ n-f+1, n-f+2, \dots , n \} $, \[\mu_k(j)=\mu_{\mu_n(k)}(j)=\mu_n\mu_k\mu_n^{-1}(j)=\mu_n(\mu_k(j)),\] that is, $\mu_n$ fixes $\mu_k(j)$. Therefore, we have that $\mu_k(j)\in F_n$ for any $j,k\in F_n$. Thus, $F_n$ is a subquandle of $Q$. Now, if this subquandle has constant profile, then the common pattern is that of $\mu_n\big|_{F_n} =(n-f+1)(n-f+2)\dots (n)$, hence $F_n$ as a quandle is the trivial quandle on $f$ elements. In particular, it is not connected.  If $F_n$ as a  quandle has not constant profile, then, by Proposition \ref{prop:connec}, it is not connected. In either case, this subquandle is not connected and hence there is a finite family of minimal disjoint sets $F_n^i$, $i\in \{ 1, 2, \dots , d \}$, such that $\bigcup_i F_n^i=F_n$ and $\mu_g(F_n^i)=F_n^i$, $\forall i$, $\forall g\in F_n$, which correspond to the (minimal) connected components of $F_n$, as a quandle. We also note that $C_n\cap F_n=\emptyset$, $C_n\cup F_n=Q$ and $\mu_g(C_n)=C_n$, $\forall g\in F_n$. Now, since $|C_n|=n-f<f = |F_1|$, $\mu_{1}$ must fix some point $a_0\in F_n$, i.e., $a_0 \in F_n \cap F_1$.

\begin{lemma}\label{lem:lem}
Let $a\in F_n$. Assume there exist $i_0 \in \{  1, 2, \dots , n-f  \}$ such that:
\begin{enumerate}
  \item $a\in F_{i_0}$. Then $a\in F_i$ for each $i \in \{  1, 2, \dots , n-f  \}$.
  \item $a\notin F_{i_0}$. Then $a\notin F_i$ for each $i \in \{  1, 2, \dots , n-f  \}$.
\end{enumerate}
\end{lemma}
\begin{proof}
\begin{enumerate}
  \item Pick $i \in \{  1, 2, \dots , n-f  \}$ Then $i = \mu_n^{n-f+i-i_0}(i_0)$. Then
\begin{align*}
\mu_i(a) &= \mu_{\mu_n^{n-f+i-i_0}(i_0)}(a) =\mu_n^{n-f+i-i_0}\mu_{i_0}\mu_n^{-(n-f+i-i_0)}(a) =\mu_n^{n-f+i-i_0}\mu_{i_0}(a) =\\
& = \mu_n^{n-f+i-i_0}(a) = a .
\end{align*}
  \item Assume to the contrary and suppose there is  $i_1 \in \{  1, 2, \dots , n-f  \}$ such that $\mu_{i_1}(a)=a$. Then, by $1.$, $\mu_i(a) = a$, for any $i\in \{ 1, 2, \dots , n-f \}$ which conflicts with $\mu_{i_0}(a)\neq a$.
\end{enumerate}
\end{proof}

%and if this happens, for any $i\in \{ 1, 2, \dots , n-f \}$, we have , \[\mu_{i}(a_0)=\mu_{\mu_n^{i}(n-f)}(a_0)=\mu_n^{i}\mu_{n-f}\mu_n^{-i}(a_0)=\mu_n^{i}\mu_{n-f}(a_0)=\mu_n^{i}(a_0)=a_0 \qquad (\dagger) \qquad .\]  That is, $a_0 \in F_n \cap F_1 \cap F_2 \cap \cdots \cap F_{n-f}$.

Thus, the $a_0\in F_1\cap F_n$ above, satisfies, thanks to Lemma \ref{lem:lem}, $a_0\in F_n \cap F_1\cap F_2 \cap \cdots \cap F_{n-f}$.

Set $$A=\{\mu_s(a_0)\,\,|\, \, s\in \{ 1, 2, \dots , n \}\, \}=\{\mu_j(a_0)\,\,|\, \, j\in \{ n-f+1, n-f+2, \dots , n \}\, \}\subset F_n^1 ,$$ since $a_0 \in F_n \cap F_1 \cap F_2 \cap \cdots \cap F_{n-f}$. In particular, $1\leq |A| \leq f$.

For each $i\in \{ 1, 2, \dots , n-f  \}$, here is the behaviour of $A$ under $\mu_i$. Let $j\in \{  n-f+1, n-f+2, \dots , n   \}$ $$\mu_i(\mu_j(a_0)) = \mu_i(\mu_j\mu_i^{-1}(\mu_i((a_0)))) = \mu_i(\mu_j\mu_i^{-1}((a_0))) = \mu_{\mu_i(j)}(a_0) \in A .$$ Furthermore, for $j, j'\in \{  n-f+1, n-f+2, \dots , n   \}$ such that $\mu_j (a_0) \neq \mu_{j'}(a_0)$, then $\mu_i(\mu_j (a_0)) \neq \mu_i(\mu_{j'}(a_0))$, for any $i\in \{  1, 2, \dots , n-f  \}$. Then, for any $i\in \{  1, 2, \dots , n-f  \}$, $\mu_i$ restricted to $A$ is a bijection.

\begin{enumerate}
  \item If $A=\{ a_0 \}$, then $\mu_s(a_0) = a_0$, for any $s\in \{  1, 2, \dots , n \}$, so $A$ is a connected component of $Q$. Since $Q$ has more than one element then $Q$ is not connected.
  \item Assume $|A| > 1$.
  \begin{enumerate}
    \item Assume further that, for each $i\in \{ 1, 2, \dots , n-f \}$, $\mu_i$ moves at least on element from $A$, say $\mu_{j_0}(a_0)$, for some $j_0 \in \{ n-f+1, n-f+2, \dots , n  \}$ - recall Lemma \ref{lem:lem}. Then, $$\mu_i(\mu_{j_0}(a_0)) \in A \setminus \{ \mu_{j_0}(a_0) \} .$$ Since $\mu_i$ has a cycle of length $n-f$, $C_i$, then $C_i\subset A$, for any $i\in \{ 1, 2, \dots , n-f  \}$. In particular, in this case, if $b\notin A$, then $\mu_i(b) = b$, for any $i\in \{ 1, 2, \dots , n-f  \}$.

        Since $A\subset F_n^1$, then $F_n^1$ is a connected component of $Q$. Since $C_n \cap F_n^1 = \emptyset$ and $C_n \subset Q$, then $Q$ is not connected.
    \item Assume now that for each $i\in \{ 1, 2, \dots , n-f  \}$, $\mu_i$ fixes any element of $A$ - again, recall Lemma \ref{lem:lem}. That is, for each $j\in \{ n-f+1, n-f+2, \dots , n  \}$, $$ \mu_i(\mu_j(a_0)) = \mu_j(a_0) .$$
  \begin{enumerate}
    \item If $A = F_n^1$, then we are done, arguing that $F_n^1$ is a connected component inside $Q$ which has fewer elements than $Q$.
    \item If $A \subsetneq F_n^1$, then since $F_n^1$ is a minimal component of $F_n$, there exist $j_0, j_1\in \{  n-f+1, n-f+2, \dots n   \}$ such that $$\mu_{j_1}(\mu_{j_0}(a_0))\in F_n^1\setminus A .$$  Let $B_1 = \{ \mu_{j_1}(\mu_{j_0}(a_0))\in F_n^1\setminus A  \, \, |\, \,  j_0, j_1\in \{  n-f+1, n-f+2, \dots n   \}  \}$. Note that if for some $i\in \{ 1, 2, \dots , n-f  \}$, and for some $j_0, j_1\in \{  n-f+1, n-f+2, \dots n   \}$ such that $\mu_{j_1}(\mu_{j_0}(a_0))\in F_n^1\setminus A$, we had $\mu_i(\mu_{j_1}(\mu_{j_0}(a_0)))\in A$, then $\mu_{j_1}(\mu_{j_0}(a_0))\in A$, since $\mu_i$ restricted to $A$ is a bijection. Therefore, $\mu_i$ restricted to $B_1$ is a bijection.

        Set $$A_1:=A \cup B_1 .$$ Go back to $2.$ (``Assume $|A|>1$ ... '') with $A_1$ taking up the role of $A$. Iterate the procedure. Since $Q$ is a finite quandle, this procedure has to finish after a finite number of steps, say $k$, with $A(=A_k) = F_n^1$.
  \end{enumerate}
  \end{enumerate}
\end{enumerate}

The proof is complete.
\end{proof}

\section{\boldmath Classification of Quandles of Cyclic Type of order $n$ with $f$ Fixed Points in the Range $n>2f$.}\label{sect:n>2f}

In this Section, we classify quandles of cyclic type of order $n$ with $f$ fixed points such that $n>2f$. Specifically, we prove that there is only one such quandle such that $n>2f$, up to isomorphism. In this range, this quandle occurs only for $n=6$ and $f=2$. This quandle is  $Q_6^2$, up to isomorphism. The proof of this fact establishes Assertion $1.(b)$ in Theorem \ref{thm:main1}. This is the main goal of the current Section.\par
In Subsection \ref{subsect:auxresults}, we prove a number of propositions and lemmas that we use in subsequent subsections. In Subsection \ref{subsect:n=3f}, we show that there are no quandles of cyclic type of order $n$ with $f$ fixed points such that $n=3f$ for $f>2$ and we prove that the only quandle of cyclic type of order $6$ with $2$ fixed points, up to isomorphism, is $Q_6^2$. In Subsection \ref{subsect:n>3f}, we show that there are no quandles of cyclic type of order $n$ with $f$ fixed points such that $n=cf$ for $c>3$. Finally, in Subsection \ref{subsect:4}, we collect the results from the preceding subsections to prove Assertion $1.(b)$ in Theorem \ref{thm:main1}. We also show that $Q_6^2$ is not simple.\par
In this Section, the results apply only to quandles of cyclic type of order $n$ with $f$ fixed points such that $n>2f$.

\subsection{Auxiliary Results}\label{subsect:auxresults}

In this Subsection, we state and prove a number of results about the structure of quandles of cyclic type of order $n$ with $f$ fixed points such that $n>2f$. These results are used in the following Subsections.\par

\begin{prop}\label{prop:assind}
The associate indices to $n-f$ are $i\Big(\frac{n-f}{f}\Big)$, $i\in\{1, \dots, f-1\}$.
\end{prop}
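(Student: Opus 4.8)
The plan is to use the structural results already established, in particular Corollary \ref{cor:order} (so that $n = cf$ with $c\geq 3$), Theorem \ref{thr:thethr} (the normalized form $\mu_n = (1\,2\,\cdots\,n-f)(n-f+1)\cdots(n)$ together with the conjugation relation $\mu_k = \mu_n^k\mu_{n-f}\mu_n^{-k}$), and Proposition \ref{prop:assocind}/its Corollary, which says that associate permutations share the same set of fixed points. Recall that the associate indices of $n-f$ are exactly the elements of $F_{n-f}\setminus\{n-f\}$, i.e.\ the $f-1$ fixed points of $\mu_{n-f}$ other than $n-f$ itself. So the task is to pin down $F_{n-f}$ explicitly as $\{\,i\,\frac{n-f}{f} : i\in\{1,\dots,f\}\,\}$ (note $i=f$ gives $n-f$ itself), and the claim is the statement about the remaining $f-1$ of them.

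First I would locate the fixed points of $\mu_{n-f}$ inside $F_n = \{n-f+1,\dots,n\}$ versus inside $C_n = \{1,\dots,n-f\}$. By Lemma \ref{lemma:ups}, $\mu_{n-f}$ fixes no element of $F_n$ (indeed $\mu_{n-f}(F_n)$ is disjoint enough from $F_n$ that no fixed point can lie there), so all $f$ fixed points of $\mu_{n-f}$ lie in $C_n = \{1,2,\dots,n-f\}$; in particular $n-f\in F_{n-f}$ consistent with idempotency. Next, suppose $m\in\{1,\dots,n-f-1\}$ is a fixed point of $\mu_{n-f}$, i.e.\ $m$ is associate to $n-f$. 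Then by Theorem \ref{thr:thethr}(2), $\mu_m = \mu_{n-f}^{\,l}$ for some $l$ coprime to $n-f$. Now apply the conjugation relation $\mu_m = \mu_n^{\,m}\mu_{n-f}\mu_n^{-m}$ from Theorem \ref{thr:thethr}(3): conjugating $\mu_{n-f}$ by the power $\mu_n^{\,m}$ of the cyclic permutation $\mu_n$ simply shifts the non-singular cycle of $\mu_{n-f}$ by $m$ (on the block $\{1,\dots,n-f\}$) and permutes the fixed-point block $\{n-f+1,\dots,n\}$ trivially. Comparing the two expressions $\mu_{n-f}^{\,l}$ and $\mu_n^{\,m}\mu_{n-f}\mu_n^{-m}$ for $\mu_m$ forces, on the $F_{n-f}$-side, that the set of fixed points of $\mu_{n-f}$ is invariant under the shift $x\mapsto x+m \pmod{n-f}$ (restricted to $C_n$). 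Hence the set $F_{n-f}\cap\{1,\dots,n-f\}$, viewed in $\mathbb{Z}/(n-f)\mathbb{Z}$, is a union of cosets of the subgroup generated by every such $m$; since it has $f$ elements and $n-f$ divides... more precisely, since $f\mid n$ and $n-f = (c-1)f$, the set must be the unique subgroup of $\mathbb{Z}/(n-f)\mathbb{Z}$ of order $f$, namely $\{\,i\,\frac{n-f}{f} : i = 1,\dots,f\,\}$, which is what we want after discarding $i=f$ (the index $n-f$ itself).

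The main obstacle I anticipate is the middle step: arguing carefully that the fixed-point set of $\mu_{n-f}$ inside $C_n$ is genuinely closed under the shift by each associate index $m$, and that this, combined with the cardinality $f$ and the divisibility $f\mid(n-f)$, forces it to be exactly the order-$f$ subgroup. One has to be careful that a priori there could be several associate indices generating only a proper sub-pattern; the way around this is to bootstrap — the associate-index relation is an equivalence relation (Proposition \ref{prop:equivrel}), the class of $n-f$ has exactly $f$ elements, each such element $m$ gives a shift-symmetry, and the subgroup of $\mathbb{Z}/(n-f)\mathbb{Z}$ generated by $\{m - (n-f) : m\in F_{n-f}\}$ must then have order dividing $f$ while the set it stabilizes has order exactly $f$, pinning everything down. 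Idempotency ($n-f\in F_{n-f}$) anchors the coset, giving the arithmetic progression $i\,\frac{n-f}{f}$ exactly.
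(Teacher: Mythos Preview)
Your proposal is correct and follows essentially the same route as the paper: both show that $F_{n-f}$, read modulo $n-f$, is additively closed (the paper by evaluating $\mu_a(b)$ directly via Theorem~\ref{thr:thethr}(3) to get $b-a\in F_{n-f}$; you by comparing the fixed-point set of $\mu_m$ computed two ways to get $F_{n-f}+m=F_{n-f}$), and then identify it as the progression $\{i(n-f)/f\}$. The only real difference is the finish --- the paper uses an ad-hoc max-gap/min-gap contradiction to conclude the elements are equally spaced, whereas your observation that a translation-invariant subset of $\mathbb{Z}/(n-f)\mathbb{Z}$ containing $0$ is a subgroup (and cyclic groups have a unique subgroup of each admissible order) is cleaner.
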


\begin{proof}
We first prove that, if $a, b\in F_{n-f}$, then $b-a\in F_{n-f} \mod n-f$. Since $a, b\in F_{n-f}$, $\mu_a$ fixes $b$ (Proposition \ref{prop:assocind}) and hence, reading $b-a \mod n-f$ and using Assertion $3.$ in Theorem \ref{thr:thethr}, \[b=\mu_a(b)=\mu_n^a\mu_{n-f}\mu_n^{-a}(b)=\mu_n^a\mu_{n-f}(b-a)=\mu_{n-f}(b-a)+a\Leftrightarrow b-a=\mu_{n-f}(b-a),\] that is, $\mu_{n-f}$ fixes $b-a$, implying that $b-a\in F_{n-f}$ (where $b-a$ is read modulo $n-f$). Now, let the indices in $F_{n-f}=\{g_{n-f}^1, \dots, g_{n-f}^f\}$ be labelled in such a way that $g_{n-f}^i<g_{n-f}^{i+1}$, $\forall i\in\{1, \dots, f-1\}$. In particular, we have $g_{n-f}^f=n-f$. Suppose these indices are not equally spaced modulo $n-f$. Therefore, there is an index $1\leq j_0\leq f$ such that $g_{n-f}^{j_0}-g_{n-f}^{{j_0}-1}\geq g_{n-f}^i-g_{n-f}^{i-1}$, $\forall i\in\{1, \dots, f\}$, and there is another index $1\leq k_0\leq f$ such that $g_{n-f}^{k_0}-g_{n-f}^{{k_0}-1}\leq g_{n-f}^i-g_{n-f}^{i-1}$, $\forall i\in\{1, \dots, f\}$, where we take $g_{n-f}^0:=0$. Moreover, $g_{n-f}^{j_0}-g_{n-f}^{{j_0}-1}>g_{n-f}^{k_0}-g_{n-f}^{{k_0}-1}$. Now, by the result we have just proved, $g_{n-f}^{k_0}-g_{n-f}^{{k_0}-1}$ belongs to $F_{n-f}$, as well as $g_{n-f}^{j_0}-(g_{n-f}^{k_0}-g_{n-f}^{{k_0}-1})$. However, we have $g_{n-f}^{{j_0}-1}<g_{n-f}^{j_0}-(g_{n-f}^{k_0}-g_{n-f}^{{k_0}-1})<g_{n-f}^{j_0}$, which is a contradiction, since $g_{n-f}^{{j_0}-1}$ and $g_{n-f}^{j_0}$ are consecutive indices in $F_{n-f}$. Hence, the indices in $F_{n-f}$ are equally spaced modulo $n-f$, and the associate indices to $n-f$ are $i\Big(\frac{n-f}{f}\Big)$, $i\in\{1, \dots, f-1\}$.
\end{proof}

\begin{corollary}\label{cor:assind}
For each index $i\in\{1,\dots,n-f\}$, $F_i=\Big\{i+j\Big(\frac{n-f}{f}\Big):1\leq j\leq f \Big\}=i+F_{n-f}$, where each index $i+j\Big(\frac{n-f}{f}\Big)$ is read modulo $n-f$.
\end{corollary}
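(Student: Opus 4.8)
The plan is to combine Proposition \ref{prop:assind} with Assertion $3.$ of Theorem \ref{thr:thethr}. First I would use Proposition \ref{prop:assind} together with the Corollary to Proposition \ref{prop:assocind} (associate permutations share the same set of fixed points) to pin down $F_{n-f}$ explicitly: since $n-f$ itself lies in $F_{n-f}$ and its associate indices are $j\big(\frac{n-f}{f}\big)$ for $j\in\{1,\dots,f-1\}$, we get
$$F_{n-f}=\Big\{\, j\Big(\frac{n-f}{f}\Big) \;:\; 1\le j\le f \,\Big\},$$
where the value $j=f$ recovers $n-f$. In particular, every element of $F_{n-f}$ lies in $\{1,\dots,n-f\}$.

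Next I would pass from $\mu_{n-f}$ to an arbitrary $\mu_i$ with $1\le i\le n-f$ via Assertion $3.$ of Theorem \ref{thr:thethr}, namely $\mu_i=\mu_n^{i}\mu_{n-f}\mu_n^{-i}$. Since conjugate permutations have conjugate sets of fixed points, $x\in F_i$ if and only if $\mu_n^{-i}(x)\in F_{n-f}$, i.e. $F_i=\mu_n^{i}\big(F_{n-f}\big)$. By Assertion $1.$ of Theorem \ref{thr:thethr}, $\mu_n$ restricted to $\{1,\dots,n-f\}$ is the $(n-f)$-cycle $(1\,2\,\cdots\,n-f)$ and fixes everything else, so $\mu_n^{i}$ acts on $\{1,\dots,n-f\}$ as the translation $p\mapsto p+i$ modulo $n-f$. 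Applying this to $F_{n-f}\subseteq\{1,\dots,n-f\}$ yields
$$F_i=i+F_{n-f}=\Big\{\, i+j\Big(\frac{n-f}{f}\Big) \;:\; 1\le j\le f \,\Big\},$$
each index read modulo $n-f$, which is the claim.

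There is no real obstacle here; the statement is essentially a transport of Proposition \ref{prop:assind} along the conjugation relation $\mu_i=\mu_n^{i}\mu_{n-f}\mu_n^{-i}$. The only point requiring a little care is the bookkeeping with the modular convention: one must note that $F_{n-f}$ lies entirely inside $\{1,\dots,n-f\}$, so that $\mu_n^{i}$ genuinely acts by translation on it rather than partly fixing points of $F_n$, and that when $i+j\big(\frac{n-f}{f}\big)\equiv 0 \pmod{n-f}$ the representative is taken to be $n-f$, consistently with the cycle notation for $\mu_n$. The case $i=n-f$ is consistent with Proposition \ref{prop:assind}, returning $F_{n-f}$ itself.
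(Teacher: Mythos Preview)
Your proof is correct and follows essentially the same approach as the paper: both use Proposition~\ref{prop:assind} to identify $F_{n-f}$ and then transport it to $F_i$ via the conjugation relation $\mu_i=\mu_n^{i}\mu_{n-f}\mu_n^{-i}$ from Theorem~\ref{thr:thethr}, exploiting that $\mu_n^{i}$ acts as a shift by $i$ on $\{1,\dots,n-f\}$. The paper verifies directly that each $i+j\big(\tfrac{n-f}{f}\big)$ is fixed by $\mu_i$ and then counts, whereas you phrase it as $F_i=\mu_n^{i}(F_{n-f})$; these are the same computation in slightly different packaging.
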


\begin{proof}
Given $i\in\{1,\dots,n-f\}$ and $j\in\{1,\dots,f\}$, we prove $i+j\Big(\frac{n-f}{f}\Big)$ is a fixed point of $\mu_i$. In fact, \[\mu_i\Big(i+j\Big(\tfrac{n-f}{f}\Big)\Big)=\mu_n^i\mu_{n-f}\mu_n^{-i}\Big(i+j\Big(\tfrac{n-f}{f}\Big)\Big)=\mu_n^i\mu_{n-f}\Big(j\Big(\tfrac{n-f}{f}\Big)\Big)=\mu_n^i\Big(j\Big(\tfrac{n-f}{f}\Big)\Big)=i+j\Big(\tfrac{n-f}{f}\Big).\] Clearly, for $1\leq j<j'\leq f$, we have $i+j\Big(\tfrac{n-f}{f}\Big)\neq i+j'\Big(\tfrac{n-f}{f}\Big) \mod n-f$. Hence, we conclude that $F_i=\Big\{i+j\Big(\frac{n-f}{f}\Big):1\leq j\leq f \Big\}$, where each $i+j\Big(\frac{n-f}{f}\Big)$ is read modulo $n-f$.
\end{proof}

Corollary \ref{cor:assind} along with Proposition \ref{prop:assocind} and Corollary \ref{cor:order} tell us exactly what are the associate indices in a quandle of cyclic type with $f$ fixed points of order $n>2f$.

\begin{lemma}\label{lem:dif1}
Given distinct indices $a,b\in F_n$, $\mu_{n-f}(b)-\mu_{n-f}(a)\in F_{n-f}$, where this index is read modulo $n-f$.
\end{lemma}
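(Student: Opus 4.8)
The plan is to recognise the two images $\mu_{n-f}(a)$ and $\mu_{n-f}(b)$ as \emph{associate} indices lying in $C_n=\{1,\dots,n-f\}$, and then read off the conclusion from Corollary \ref{cor:assind}, which describes the associate class of any index in that range as a coset of $F_{n-f}$ modulo $n-f$.

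First I would observe that, since $a\in F_n$, the permutations $\mu_a$ and $\mu_n$ are associate (indeed $\mu_n(a)=a$ because $a\in F_n$, and $\mu_a(n)=n$ by Proposition \ref{prop:assocind}), so $F_a=F_n$; likewise $F_b=F_n$. Next, by Lemma \ref{lemma:ups}, $\mu_{n-f}(a)\notin F_n$ and $\mu_{n-f}(b)\notin F_n$; since $C_n\cup F_n=\{1,\dots,n\}$ and $C_n=\{1,\dots,n-f\}$ (recall the standing form of $\mu_n$), both $\mu_{n-f}(a)$ and $\mu_{n-f}(b)$ lie in $\{1,\dots,n-f\}$, and they are distinct because $\mu_{n-f}$ is a bijection and $a\neq b$.

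The key step is the quandle relation $\mu_{\mu_{n-f}(a)}=\mu_{n-f}\,\mu_a\,\mu_{n-f}^{-1}$ from Theorem \ref{thm:equivdef}: conjugation by $\mu_{n-f}$ carries the fixed-point set of $\mu_a$ onto that of $\mu_{\mu_{n-f}(a)}$, so $F_{\mu_{n-f}(a)}=\mu_{n-f}(F_a)=\mu_{n-f}(F_n)$. Running the same computation with $b$ in place of $a$ yields $F_{\mu_{n-f}(b)}=\mu_{n-f}(F_n)$ as well, whence $F_{\mu_{n-f}(a)}=F_{\mu_{n-f}(b)}$; that is, $\mu_{n-f}(a)$ and $\mu_{n-f}(b)$ are associate indices. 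Applying Corollary \ref{cor:assind} to the index $i=\mu_{n-f}(a)\in\{1,\dots,n-f\}$ gives $F_{\mu_{n-f}(a)}=\mu_{n-f}(a)+F_{n-f}$ (indices read modulo $n-f$), and since $\mu_{n-f}(b)\in F_{\mu_{n-f}(b)}=F_{\mu_{n-f}(a)}$ we conclude $\mu_{n-f}(b)-\mu_{n-f}(a)\in F_{n-f}$ modulo $n-f$, as required.

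There is no serious obstacle here; the only points needing care are to verify that $\mu_{n-f}(a)$ and $\mu_{n-f}(b)$ actually fall in the range $\{1,\dots,n-f\}$ where Corollary \ref{cor:assind} applies --- which is precisely Lemma \ref{lemma:ups} --- and to keep in mind that $F_{n-f}$ is closed under differences modulo $n-f$, being the subgroup of $\mathbb{Z}/(n-f)\mathbb{Z}$ generated by $(n-f)/f$, exactly as exploited in the proof of Proposition \ref{prop:assind}.
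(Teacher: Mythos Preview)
Your argument is correct, and it takes a genuinely different route from the paper's. The paper proceeds computationally: using assertion \textit{2.} of Theorem \ref{thr:thethr} to write $\mu_b=\mu_a^{l_{b,a}}$, then assertion \textit{4.} to expand $\mu_{n-f}\mu_a\mu_{n-f}^{-1}$ and $\mu_{n-f}\mu_b\mu_{n-f}^{-1}$ as conjugates of $\mu_{n-f}$ by powers of $\mu_n$, it arrives at the identity $\mu_{n-f}^{l_{b,a}}=\mu_{\mu_{n-f}(b)-\mu_{n-f}(a)}$ (via assertion \textit{3.}), and reads off the conclusion from the fact that the left-hand side has fixed-point set $F_{n-f}$. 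You instead argue conceptually: conjugation by $\mu_{n-f}$ carries $F_a=F_n$ to $F_{\mu_{n-f}(a)}$ and likewise $F_b=F_n$ to $F_{\mu_{n-f}(b)}$, so these two fixed-point sets coincide; then Corollary \ref{cor:assind} identifies that common set as the coset $\mu_{n-f}(a)+F_{n-f}$, whence $\mu_{n-f}(b)-\mu_{n-f}(a)\in F_{n-f}$. Your route is shorter and more transparent for the statement at hand, and indeed essentially anticipates Corollary \ref{cor:fplets}. The paper's computation, however, yields as a byproduct the explicit identity $\mu_{\mu_{n-f}(b)-\mu_{n-f}(a)}=\mu_{n-f}^{l_{b,a}}$, which is invoked verbatim later in Corollary \ref{cor:dif3}; your argument does not produce this identity directly. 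Your closing remark about $F_{n-f}$ being closed under differences is accurate but not actually needed, since you already have $\mu_{n-f}(b)\in\mu_{n-f}(a)+F_{n-f}$ directly.
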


\begin{proof}
Let $a,b\in F_n$. By assertion \textit{2.} in Theorem \ref{thr:thethr}, we have $\mu_b=\mu_a^{l_{b,a}}$. Hence, using assertion \textit{4.} in Theorem \ref{thr:thethr}, \[\mu_n^{\mu_{n-f}(a)}\mu_{n-f}^{l_{b,a}}\mu_n^{-\mu_{n-f}(a)}=\Big(\mu_n^{\mu_{n-f}(a)}\mu_{n-f}\mu_n^{-\mu_{n-f}(a)}\Big)^{l_{b,a}}=\Big(\mu_{n-f}\mu_a\mu_{n-f}^{-1}\Big)^{l_{b,a}}=\]\[=\mu_{n-f}\mu_a^{l_{b,a}}\mu_{n-f}^{-1}=\mu_{n-f}\mu_b\mu_{n-f}^{-1}=\mu_n^{\mu_{n-f}(b)}\mu_{n-f}\mu_n^{-\mu_{n-f}(b)},\] which implies, by assertion \textit{3.} in Theorem \ref{thr:thethr}, that \[\mu_{n-f}^{l_{b,a}}=\mu_n^{\mu_{n-f}(b)-\mu_{n-f}(a)}\mu_{n-f}\mu_n^{-(\mu_{n-f}(b)-\mu_{n-f}(a))}=\mu_{\mu_{n-f}(b)-\mu_{n-f}(a)},\] where the index $\mu_{n-f}(b)-\mu_{n-f}(a)$ is read modulo $n-f$. Since $1\leq l_{b,a}<n-f$, then $\mu_{n-f}^{l_{b,a}}$ and $\mu_{n-f}$ have the same set of fixed points. Thus, the equalities above imply that, modulo $n-f$, $\mu_{n-f}(b)-\mu_{n-f}(a)\in F_{n-f}$.
\end{proof}

\begin{corollary}\label{cor:fplets}
$\mu_{n-f}(F_n)=F_k$, for some $k\notin F_n\cup F_{n-f}$.
\end{corollary}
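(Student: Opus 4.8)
The plan is to show that the $f$-element set $\mu_{n-f}(F_n)$ is forced to coincide with one of the fixed-point sets $F_k$ provided by Corollary~\ref{cor:assind}, and then to check that the index $k$ one obtains lies neither in $F_n$ nor in $F_{n-f}$.

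First I would place $\mu_{n-f}(F_n)$ inside $C_n=\{1,\dots,n-f\}$: by Lemma~\ref{lemma:ups}, $\mu_{n-f}(a)\notin F_n$ for every $a\in F_n$, and since $\{1,\dots,n\}\setminus F_n=\{1,\dots,n-f\}$, this gives $\mu_{n-f}(F_n)\subseteq\{1,\dots,n-f\}$. Next, Lemma~\ref{lem:dif1} says that for distinct $a,b\in F_n$ the difference $\mu_{n-f}(b)-\mu_{n-f}(a)$, read modulo $n-f$, lies in $F_{n-f}$; equivalently, any two elements of $\mu_{n-f}(F_n)$ differ, modulo $n-f$, by an element of $F_{n-f}$. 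Fixing any $k\in\mu_{n-f}(F_n)$ (so $k\in\{1,\dots,n-f\}$), every element of $\mu_{n-f}(F_n)$ then lies in $k+F_{n-f}$, which by Corollary~\ref{cor:assind} equals $F_k$. Since $|\mu_{n-f}(F_n)|=|F_n|=f=|F_k|$, the inclusion $\mu_{n-f}(F_n)\subseteq F_k$ is an equality: $\mu_{n-f}(F_n)=F_k$.

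It then remains to verify $k\notin F_n\cup F_{n-f}$. Because $k\in\{1,\dots,n-f\}$ while $F_n=\{n-f+1,\dots,n\}$, we have $k\notin F_n$ immediately. Suppose, for contradiction, that $k\in F_{n-f}$; then $F_k=F_{n-f}$, so $\mu_{n-f}(F_n)=F_{n-f}$. But $F_{n-f}$ consists of $n-f$ together with the associate indices of $n-f$ from Proposition~\ref{prop:assind}, all of which are $\le n-f$, so $F_n\cap F_{n-f}=\emptyset$ and hence $F_n\subseteq C_{n-f}$. Since $\mu_{n-f}$ maps its non-singular cycle $C_{n-f}$ bijectively onto itself, $\mu_{n-f}(F_n)\subseteq C_{n-f}$, which is disjoint from $F_{n-f}$, contradicting $\mu_{n-f}(F_n)=F_{n-f}$. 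Therefore $k\notin F_{n-f}$, and the corollary follows.

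The proof is mostly an assembly of Lemmas~\ref{lemma:ups} and~\ref{lem:dif1} together with Corollary~\ref{cor:assind}; the step that needs a little separate care --- and which I would flag as the main (minor) obstacle --- is the exclusion $k\notin F_{n-f}$, where one uses that $\mu_{n-f}$ preserves its non-singular cycle $C_{n-f}$ and that $F_n$ is disjoint from $F_{n-f}$.
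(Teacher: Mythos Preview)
Your proof is correct and follows essentially the same route as the paper: pick $k=\mu_{n-f}(a)$ for some $a\in F_n$, use Lemma~\ref{lem:dif1} together with Corollary~\ref{cor:assind} to get $\mu_{n-f}(F_n)\subseteq F_k$, and conclude by a cardinality count. The only difference is that the paper asserts $k\notin F_n\cup F_{n-f}$ in one breath ``by Lemma~\ref{lemma:ups}'' (which literally only yields $k\notin F_n$), whereas you spell out the exclusion $k\notin F_{n-f}$ separately via $F_n\subseteq C_{n-f}$ and $\mu_{n-f}(C_{n-f})=C_{n-f}$; this is a welcome clarification rather than a different idea.
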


\begin{proof}
Let $a\in F_n$ and let $\mu_{n-f}(a)=k$, where $k\notin F_n\cup F_{n-f}$ by Lemma \ref{lemma:ups}. For each $b\in F_n\setminus\{a\}$, we have that $\mu_{n-f}(b)-\mu_{n-f}(a)\in F_{n-f}$ by Lemma \ref{lem:dif1}. Therefore, by Corollary \ref{cor:assind}, $\mu_{n-f}(F_n)\subset F_k$. Since $|F_n|=|F_k|$ and $\mu_{n-f}$ is a bijection, $\mu_{n-f}(F_n)=F_k$.
\end{proof}

\begin{corollary}\label{cor:assper}
Let $\mu_{n-f}(F_n)=F_k$, $k\notin F_n\,\cup\,F_{n-f}$, see Corollary \ref{cor:fplets}. Then $\mu_{i\big(\frac{n-f}{f}\big)}(F_n)=F_k$, $\forall i\in\{1, \dots, f\}$.
\end{corollary}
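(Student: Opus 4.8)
The plan is to obtain an explicit description of how each permutation $\mu_{i(n-f)/f}$ acts on the block $F_n$ and to read off directly that its image is $F_k$, independently of $i$. Recall that by Corollary \ref{cor:order} we may write $n=cf$, so $(n-f)/f=c-1$ is an integer, and by Proposition \ref{prop:assind} the indices $i(n-f)/f$ with $1\le i\le f$ (the case $i=f$ being $n-f$ itself) are precisely the associate indices of $n-f$; in particular each such $\mu_{i(n-f)/f}$ has the same fixed-point set $F_{n-f}$ as $\mu_{n-f}$, and the statement for $i=f$ is exactly the hypothesis.

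First I would compute, for an arbitrary $a\in F_n$ and $1\le i\le f$, using Assertion $3.$ of Theorem \ref{thr:thethr},
\[
\mu_{i(n-f)/f}(a)=\mu_n^{\,i(n-f)/f}\,\mu_{n-f}\,\mu_n^{-i(n-f)/f}(a)=\mu_n^{\,i(n-f)/f}\,\mu_{n-f}(a),
\]
where the last equality uses $\mu_n(a)=a$ since $a\in F_n$. By Lemma \ref{lemma:ups}, $\mu_{n-f}(a)\notin F_n$, hence $\mu_{n-f}(a)\in\{1,\dots,n-f\}$, i.e.\ it lies on the non-singular cycle $(1\,2\,\cdots\,n-f)$ of $\mu_n$; therefore $\mu_n^{\,i(n-f)/f}$ sends it to $\mu_{n-f}(a)+i(n-f)/f$ read modulo $n-f$. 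Letting $a$ range over $F_n$ then gives $\mu_{i(n-f)/f}(F_n)=\mu_{n-f}(F_n)+i(n-f)/f=F_k+i(n-f)/f$, all computations modulo $n-f$.

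The last step is to see $F_k+i(n-f)/f=F_k$. By Corollary \ref{cor:assind}, $F_k=k+F_{n-f}\pmod{n-f}$, and $F_{n-f}=\{\,j(n-f)/f:1\le j\le f\,\}$ is exactly the cyclic subgroup of $\mathbb{Z}/(n-f)\mathbb{Z}$ generated by $(n-f)/f$; since $i(n-f)/f$ lies in this subgroup, $F_{n-f}+i(n-f)/f=F_{n-f}$, whence $F_k+i(n-f)/f=k+F_{n-f}+i(n-f)/f=k+F_{n-f}=F_k$. Combining the two displays yields $\mu_{i(n-f)/f}(F_n)=F_k$ for every $i\in\{1,\dots,f\}$, as claimed.

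I expect no genuine obstacle here: the only delicate point is the modular bookkeeping — ensuring that $\mu_{n-f}(a)$ really lands in $\{1,\dots,n-f\}$ so that $\mu_n$ acts on it by the additive shift (this is exactly what Lemma \ref{lemma:ups} supplies), and that ``$+\,i(n-f)/f$ modulo $n-f$'' is read on the same representatives $\{1,\dots,n-f\}$ used for $F_{n-f}$ and $F_k$ in Proposition \ref{prop:assind} and Corollary \ref{cor:assind}. An essentially equivalent route would use Assertion $2.$ of Theorem \ref{thr:thethr} to write $\mu_{i(n-f)/f}=\mu_{n-f}^{\,l_i}$ with $\gcd(n-f,l_i)=1$, but one still needs the additive description above (or an induction on the exponent) to track the image of the block $F_n$, so the direct computation seems cleanest.
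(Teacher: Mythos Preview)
Your proof is correct and follows essentially the same route as the paper: both use Assertion~$3.$ of Theorem~\ref{thr:thethr} together with $a\in F_n$ to compute $\mu_{i(n-f)/f}(a)=\mu_{n-f}(a)+i(n-f)/f\pmod{n-f}$, and then invoke Corollary~\ref{cor:assind} to see this lands back in $F_k$. The only cosmetic difference is that the paper argues element-wise (showing $\mu_{n-f}(a)+i(n-f)/f\in F_{\mu_{n-f}(a)}=F_k$ and then appealing to bijectivity and $|F_n|=|F_k|$), whereas you phrase the last step as the set-level translation invariance $F_k+i(n-f)/f=F_k$ via the subgroup structure of $F_{n-f}$ in $\mathbb{Z}/(n-f)\mathbb{Z}$.
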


\begin{proof}
Given $a\in F_n$ and $i\in\{1,\dots,f\}$, we prove that $\mu_{i\big(\frac{n-f}{f}\big)}(a)\in F_k$. We have \[\mu_{i\big(\frac{n-f}{f}\big)}(a)=\mu_n^{i\big(\frac{n-f}{f}\big)}\mu_{n-f}\mu_n^{-i\big(\frac{n-f}{f}\big)}(a)=\mu_n^{i\big(\frac{n-f}{f}\big)}\mu_{n-f}(a)=\mu_{n-f}(a)+i\Big(\tfrac{n-f}{f}\Big) \in F_{\mu_{n-f}(a)} = F_k.\] %As $\mu_{n-f}(a)\in F_k$, we have that $\mu_{n-f}(a)+i\Big(\frac{n-f}{f}\Big)\in F_k$
by Corollary \ref{cor:assind}. Hence, $\mu_{i\big(\frac{n-f}{f}\big)}(F_n)\subset F_k$. As $|F_n|=|F_k|$ and $\mu_{i\big(\frac{n-f}{f}\big)}$ is a bijection, $\mu_{i\big(\frac{n-f}{f}\big)}(F_n)=F_k$ and the result follows.
\end{proof}

\begin{corollary}\label{cor:difper}
$\mu_{i\big(\frac{n-f}{f}\big)}\neq\mu_{j\big(\frac{n-f}{f}\big)}$, where $1\leq i\neq j\leq f$.
\end{corollary}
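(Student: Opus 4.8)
The plan is a proof by contradiction. Suppose $\mu_{im} = \mu_{jm}$ for two indices $1 \le i \neq j \le f$, where I abbreviate $m = \frac{n-f}{f}$ (an integer by Corollary \ref{cor:order}). By Assertion $3$ of Theorem \ref{thr:thethr}, $\mu_{im} = \mu_n^{im}\mu_{n-f}\mu_n^{-im}$ and $\mu_{jm} = \mu_n^{jm}\mu_{n-f}\mu_n^{-jm}$; equating these and simplifying gives that $\mu_n^{k}$ commutes with $\mu_{n-f}$, where $k$ denotes the residue of $(i-j)m$ modulo $n-f$. Because $1 \le |i-j| \le f-1$, we have $i-j \not\equiv 0 \pmod f$ and hence $k \not\equiv 0 \pmod{n-f}$; in particular $0 < k < n-f$ and $\mu_n^{k} \neq \mathrm{id}$.

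Next I would invoke the explicit description of $C_{S_n}(\mu_{n-f})$ obtained inside the proof of Assertion $6$ of Theorem \ref{thr:thethr}: every permutation commuting with $\mu_{n-f}$ has the form $\sigma\tau^{l}$, with $\sigma$ a permutation of $F_{n-f}$ and $\tau$ the length-$(n-f)$ cycle of $\mu_{n-f}$, which is supported on $C_{n-f}$. Write $\mu_n^{k} = \sigma\tau^{l}$, and first determine $l$ by restricting to $F_n$. Since $F_n$ and $F_{n-f}$ are distinct classes of the ``associate'' relation they are disjoint (see Proposition \ref{prop:equivrel} and the proof of Corollary \ref{cor:order}), so $F_n \subseteq C_{n-f}$; on $F_n$ the permutation $\mu_n^{k}$ acts trivially (as $\mu_n$ fixes $F_n$), and so does $\sigma$, so $\tau^{l}$ fixes every point of the nonempty set $F_n$. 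As $\tau$ restricts to a single $(n-f)$-cycle on $C_{n-f}$, this is only possible if $\tau^{l} = \mathrm{id}$. Hence $\mu_n^{k} = \sigma$ is supported inside $F_{n-f}$.

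The contradiction then comes from restricting to $C_n = \{1, \dots, n-f\}$, on which $\mu_n^{k}$ acts as $\tau_n^{k}$ with $\tau_n = (1\ 2\ \cdots\ n-f)$: since $0 < k < n-f$, $\tau_n^{k}$ fixes no point of $C_n$, whereas $\sigma = \mu_n^{k}$, being supported in $F_{n-f} \subseteq C_n$, fixes every point of $C_n \setminus F_{n-f}$, a set of size $n - 2f > 0$ (this is where $n > 2f$ is used). That is the desired contradiction, so $\mu_{im} \neq \mu_{jm}$. The one step requiring genuine care is the bookkeeping about which invariant set lies inside which --- namely that $F_n$ sits inside the support $C_{n-f}$ of $\tau$ while $F_{n-f}$ sits inside $C_n$ --- so that the restrictions to $F_n$ and to $C_n$ constrain $\mu_n^{k}$ from opposite directions; once Theorem \ref{thr:thethr} and its proof are available, the remaining computations are routine.
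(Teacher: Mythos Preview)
Your argument is correct, but it takes a considerably longer detour than the paper's proof. The paper simply evaluates both permutations at a point $a\in F_n$: from Assertion~3 of Theorem~\ref{thr:thethr} (and using $\mu_n^{-im}(a)=a$ together with Lemma~\ref{lemma:ups} so that $\mu_{n-f}(a)\in C_n$) one gets
\[
\mu_{im}(a)=\mu_n^{im}\mu_{n-f}(a)=\mu_{n-f}(a)+i\,\tfrac{n-f}{f}\pmod{n-f},
\]
and these values are pairwise distinct for $1\le i\le f$. That is the whole proof.

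What you do instead is deduce that $\mu_n^{k}$ centralizes $\mu_{n-f}$, invoke the explicit description of $C_{S_n}(\mu_{n-f})$ from the proof of Assertion~6, and then play the supports $F_n\subseteq C_{n-f}$ and $F_{n-f}\subseteq C_n$ against each other to force a contradiction. This works and is a nice illustration of how the centralizer structure constrains everything, but it imports heavier machinery (the full centralizer count) and explicitly needs $n-2f>0$ at the last step, whereas the paper's one-line evaluation avoids both. In short: your route is sound and self-contained given Theorem~\ref{thr:thethr}, but the direct evaluation at a point of $F_n$ is the intended shortcut.
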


\begin{proof}
Given $a\in F_n$, $\mu_{i\big(\frac{n-f}{f}\big)}(a)=\mu_{n-f}(a)+i\Big(\frac{n-f}{f}\Big)$, $\forall i\in\{1,\dots,f\}$. Then, given $i,j\in\{1,\dots,f\}$, with $i\neq j$, $\mu_{i\big(\frac{n-f}{f}\big)}\neq\mu_{j\big(\frac{n-f}{f}\big)}$.
\end{proof}

With the previous results, we are now able to prove a very important proposition. In fact, this proposition is used in the following section to prove that there are no quandles of cyclic type of order $n$ with $f$ fixed points such that $n=3f$ for $f>2$. \par
Before presenting the proposition, we just recall some of the terminology we used in assertion \textit{2.} in Theorem \ref{thr:thethr}.

\begin{definition}\label{def:expassociates}
Let $h$ and $h'$ be associate indices. We let $l_{h, h'}$ denote the positive integer such that $\mu_h = \mu_{h'}^{l_{h, h'}}$, where $GCD(n-f, l_{h,h'})=1$, $1\leq l_{h,h'}<n-f$, in accordance with assertion \textit{2.} in Theorem \ref{thr:thethr}.
Moreover, since the associate indices to $n-f$ are $i\Big(\frac{n-f}{f}\Big)$, $i\in \{1,\dots,f-1\}$, we let $l_{i, f}^{\ast}:=l_{\frac{i(n-f)}{f}, n-f}$ to simplify the notation.
\end{definition}

\begin{prop}\label{prop:exp}
$\{l_{i,f}^*:1\leq i\leq f\}=\Big\{1+j\Big(\frac{n-f}{f}\Big):0\leq j<f\Big\}$
\end{prop}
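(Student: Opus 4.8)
We want to show that the set of exponents $\{l_{i,f}^* : 1 \le i \le f\}$ — where $l_{i,f}^* = l_{\frac{i(n-f)}{f}, n-f}$ is the exponent such that $\mu_{i(n-f)/f} = \mu_{n-f}^{l_{i,f}^*}$ — equals the arithmetic progression $\{1 + j\frac{n-f}{f} : 0 \le j < f\}$ inside $(\mathbb{Z}/(n-f)\mathbb{Z})^\times$. Note $l_{f,f}^* = 1$ (taking $i=f$ gives $\mu_{n-f} = \mu_{n-f}^1$), which matches $j=0$ on the right. Both sides have at most $f$ elements; since by Corollary \ref{cor:difper} the permutations $\mu_{i(n-f)/f}$ are pairwise distinct for $1 \le i \le f$, and distinct exponents give distinct powers of the $(n-f)$-cycle $\mu_{n-f}$, the left-hand set has exactly $f$ elements. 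So it suffices to prove one inclusion, say $\subseteq$, plus the observation that the right-hand side also has exactly $f$ distinct residues mod $n-f$ (which follows since $\gcd(\frac{n-f}{f}, \frac{n-f}{f}\cdot f) $ considerations show $j\frac{n-f}{f}$ for $0\le j<f$ are distinct mod $n-f$).

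**The plan.** The strategy is to compute, for fixed $a \in F_n$, the action of $\mu_{i(n-f)/f}$ on the set $F_n$ in two different ways and compare. On one hand, from the proof of Corollary \ref{cor:assper} we already have the explicit formula $\mu_{i(n-f)/f}(a) = \mu_{n-f}(a) + i\frac{n-f}{f}$ (reading indices mod $n-f$ where appropriate), valid for all $i \in \{1,\dots,f\}$. On the other hand, since $i(n-f)/f$ is associate to $n-f$, we have $\mu_{i(n-f)/f} = \mu_{n-f}^{l_{i,f}^*}$, so $\mu_{i(n-f)/f}(a) = \mu_{n-f}^{l_{i,f}^*}(a)$. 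The idea is to iterate the conjugation relations of Theorem \ref{thr:thethr} (assertions 3 and 4) to get a closed form for $\mu_{n-f}^{l}(a)$ when $a \in F_n$: I expect that $\mu_{n-f}^2(a)$, $\mu_{n-f}^3(a)$, etc., can be tracked using Lemma \ref{lem:dif1} and Corollary \ref{cor:fplets}, which say $\mu_{n-f}$ sends $F_n$ bijectively onto some $F_k$ with $k \notin F_n \cup F_{n-f}$, and more generally iterated applications march $a$ through a predictable sequence of fixed-point sets. Combining the two expressions for $\mu_{i(n-f)/f}(a)$ should force $l_{i,f}^*$ to be congruent to $1 + (\text{something})\cdot\frac{n-f}{f}$ mod $n-f$, i.e. to lie in the claimed progression. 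A cleaner route may be: apply assertion 4 of Theorem \ref{thr:thethr} with $a \in F_n$ together with assertion 2 ($\mu_b = \mu_a^{l_{b,a}}$ for $a,b \in F_n$) in the spirit of the proof of Lemma \ref{lem:dif1}, but now specialized to the associate indices of $n-f$; this pins down how $l_{i,f}^*$ depends on $i$ via the "spacing $\frac{n-f}{f}$" already established in Proposition \ref{prop:assind}.

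**Executing the inclusion.** Concretely, I would first record that $l_{f,f}^* = 1$. Then for general $i$, I would write $\mu_{i(n-f)/f} = \mu_{n-f}^{l_{i,f}^*}$ and evaluate both sides at a point $g$ in the non-singular cycle $C_{n-f} = \{1,\dots,n-f\}\setminus(\text{spacing set})$ — wait, more carefully, at points where the formula from Corollary \ref{cor:assind}/\ref{cor:assper} applies. Using $\mu_{n-f} = \mu_n^{0}\mu_{n-f}\mu_n^{0}$ and assertion 3, the power $\mu_{n-f}^{l}$ restricted to the cycle $\tau$ of length $n-f$ is just $\tau^l$. Testing on a point of $C_{n-f}$ and using that $\mu_{i(n-f)/f}$ equals $\mu_n^{i(n-f)/f}\mu_{n-f}\mu_n^{-i(n-f)/f}$ (assertion 3 of Theorem \ref{thr:thethr}), the relation $\mu_n^{i(n-f)/f}\mu_{n-f}\mu_n^{-i(n-f)/f} = \mu_{n-f}^{l_{i,f}^*}$ becomes a conjugation identity for the single $(n-f)$-cycle; comparing where a fixed base point goes identifies $l_{i,f}^*$ modulo $n-f$ as an affine function of $i(n-f)/f$, which lands in $\{1 + j\frac{n-f}{f}\}$. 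Finally, I would invoke Corollary \ref{cor:difper} to conclude the $f$ values $l_{i,f}^*$ are distinct, hence exhaust the $f$-element progression, giving equality.

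**Main obstacle.** The delicate point is the bookkeeping of which elements are "in the cycle" versus "fixed" when iterating $\mu_{n-f}$ and the conjugates $\mu_n^k \mu_{n-f}\mu_n^{-k}$, since $\mu_{n-f}$ mixes $F_n$ with $C_{n-f}$ and one must be careful that the affine formula $\mu_{i(n-f)/f}(a) = \mu_{n-f}(a) + i\frac{n-f}{f}$ is being compared against $\tau^{l_{i,f}^*}(a)$ on the \emph{same} orbit. In other words, the hard part is not any single computation but making rigorous the claim that $\mu_{n-f}^{l_{i,f}^*}$ and $\mu_n^{i(n-f)/f}\mu_{n-f}\mu_n^{-i(n-f)/f}$ agree as full permutations and then extracting the residue of $l_{i,f}^*$ correctly — essentially, verifying that conjugating the $(n-f)$-cycle $\tau$ by the $(n-f)$-cycle inside $\mu_n$ shifted by $i(n-f)/f$ produces exactly $\tau$ raised to the power $1 + (\text{the appropriate } j)\frac{n-f}{f}$.
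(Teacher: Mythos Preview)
Your reduction is sound: both sides have cardinality $f$ (the left by Corollary \ref{cor:difper}, the right because the step $\tfrac{n-f}{f}$ divides $n-f$ exactly $f$ times), so it is enough to prove the inclusion $\subseteq$. You also correctly derive, for any $a\in F_n$,
\[
\mu_{n-f}^{\,l_{i,f}^*}(a)=\mu_{i(n-f)/f}(a)=\mu_{n-f}(a)+i\Big(\tfrac{n-f}{f}\Big),
\]
equivalently $\mu_{n-f}^{\,l_{i,f}^*-1}(b)=b+i\big(\tfrac{n-f}{f}\big)$ for every $b\in F_k:=\mu_{n-f}(F_n)$. Since $b+i\tfrac{n-f}{f}\in F_k$ by Corollary \ref{cor:assind}, this says that $\tau^{\,l_{i,f}^*-1}$ sends $F_k$ into $F_k$, where $\tau$ is the $(n-f)$-cycle of $\mu_{n-f}$.

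Here is the gap. From $\tau^{\,l_{i,f}^*-1}(F_k)\subset F_k$ you want to conclude $l_{i,f}^*-1\in \tfrac{n-f}{f}\cdot\mathbb Z$. That inference is valid \emph{only if} the $f$ points of $F_k$ are equally spaced along the cycle $\tau$ with spacing $\tfrac{n-f}{f}$. What you actually know from Corollary \ref{cor:assind} is that the elements of $F_k$ form an arithmetic progression of step $\tfrac{n-f}{f}$ as residues modulo $n-f$, i.e.\ they are equally spaced along the cycle of $\mu_n$. But $\tau$ is the non-singular cycle of $\mu_{n-f}$, a completely different permutation of a different support; there is no a priori reason the $\tau$-positions of the elements of $F_k$ should be equidistributed. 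Your last paragraph in fact flags exactly this conflation (``verifying that conjugating the $(n-f)$-cycle $\tau$ by the $(n-f)$-cycle inside $\mu_n$\dots produces exactly $\tau$ raised to the power $1+j\tfrac{n-f}{f}$''): that is the statement to be proved, not a bookkeeping detail.

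The paper closes this gap with a separate lemma (Lemma \ref{lemma:exp}): the elements of $F_k=\mu_{n-f}(F_n)$ are equally spaced in the cycle $\tau$ of $\mu_{n-f}$, each consecutive pair separated by $\tfrac{n-f}{f}$ steps of $\tau$. The argument is a short contradiction: if two consecutive $\tau$-gaps among the $g_k^j$ differed, then for some associate index the map $\mu_{n-f}^{\,l_{i,f}^*}$ would carry one element of $F_n$ into $F_k$ but another element of $F_n$ \emph{outside} $F_k$, contradicting Corollary \ref{cor:assper}. Once that lemma is in hand, your computation finishes the proof immediately; without it, the inclusion $\{l_{i,f}^*\}\subseteq\{1+j\tfrac{n-f}{f}\}$ does not follow. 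So your outline is essentially the paper's, minus its key lemma; you should insert a proof of the equal $\tau$-spacing of $F_k$ before drawing the final conclusion.
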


\begin{proof}
By Corollary \ref{cor:fplets}, there exists an index $k\notin F_n \cup F_{n-f}$ such that $\mu_{n-f}(F_n) = F_k$. Moreover, we write $F_k = \{g_k^1, \dots,g_k^f \}$, where the indices $g_k^i$ are labelled in such a way that there exist positive integers $l_i>1$, $\forall i\in\{1,\dots,f\}$, such that $\mu_{n-f}^{l_i}(g_k^{i-1})=g_k^i$ and $\mu_{n-f}^m(g_k^{i-1})\notin F_k$ for $1<m<l_i$, where we take $g_k^0:=g_k^f$. Basically, each $l_i$ is the smallest positive integer such that $\mu_{n-f}^{l_i}(g_k^{i-1})\in F_k$, and we have $\mu_{n-f}^{l_i}(g_k^{i-1})=g_k^i$. We note that the indices in $F_k$ belong to the non-singular cycle of $\mu_{n-f}$ and we prove the following lemma.

\begin{lemma}\label{lemma:exp}
The indices in $F_k$ are equally spaced in the non-singular cycle of $\mu_{n-f}$. In particular, the $l_i$'s referred to above satisfy $l_i=(n-f)/f$, $\forall i\in\{1,\dots,f\}$.
\end{lemma}

\begin{proof}
Suppose this is not true. Then, there is an index $j_0\in\{1,\dots,f\}$ such that $l_{j_0}<l_{j'_0}$, for a certain $j'_0\in\{1,\dots,f\}$. Assume, without loss of generality, that the indices in $F_n=\{g_n^1,\dots,g_n^f\}$ are labelled in such a way that $\mu_{n-f}(g_n^i)=g_k^i$, $\forall i\in\{1,\dots,f\}$. Taking $g_n^0:=g_n^f$,

\begin{equation}\label{eq:1}
\mu_{n-f}^{l_{j_0}}(g_k^{{j_0}-1})=g_k^{j_0}\Leftrightarrow\mu_{n-f}^{l_{j_0}}(\mu_{n-f}(g_n^{{j_0}-1}))=g_k^{j_0}\Leftrightarrow\mu_{n-f}^{l_{j_0}+1}(g_n^{{j_0}-1})=g_k^{j_0}.\end{equation}

By Corollary \ref{cor:assper}, $\mu_{n-f}$ and its associate permutations are bijections from $F_n$ to $F_k$, where $|F_k|\,\,=f$. These $f$ permutations are of the form $\mu_{i\big(\frac{n-f}{f}\big)}=\mu_{n-f}^{l_{i,f}^*}$, with $i\in\{1,\dots,f\}$ and $l_{f,f}^*=1$, and they are all different from each other by Corollary \ref{cor:difper}. Hence, any two of them have different images at $g_n^{{j_0}-1}$, otherwise at least two of these permutations would be equal to each other, conflicting with Corollary \ref{cor:difper}. (in particular, $\{   l_{i, f}^{\ast} : 1 \leq i \leq f  \}$ has exactly $f$ elements). Then, there has to be an integer $j_1\in\{1,\dots,f\}$ such that $\mu_{j_1\big(\frac{n-f}{f}\big)}(g_n^{{j_0}-1})=\mu_{n-f}^{l_{j_1,f}^*}(g_n^{{j_0}-1})=g_k^{j_0}$. Hence, comparing with (\ref{eq:1}), $l_{j_1,f}^*=l_{j_0}+1$ and $\mu_{j_1\big(\frac{n-f}{f}\big)}=\mu_{n-f}^{l_{j_0}+1}$, and thus \[\mu_{j_1\big(\frac{n-f}{f}\big)}(g_n^{{j'_0}-1})=\mu_{n-f}^{l_{j_0}+1}(g_n^{{j'_0}-1})=\mu_{n-f}^{l_{j_0}}(\mu_{n-f}(g_n^{{j'_0}-1}))=\mu_{n-f}^{l_{j_0}}(g_k^{{j'_0}-1}).\] However, $\mu_{n-f}^{l_{j_0}}(g_k^{{j'_0}-1})\notin F_k$, as $1<l_{j_0}<l_{{j'_0}}$. This is a contradiction, since $\mu_{j_0\big(\frac{n-f}{f}\big)}(F_n)=F_k$. Thus, the indices in $F_k$ are equally spaced in the cycle of length $n-f$ of $\mu_{n-f}$. \end{proof}

We now resume the proof of Proposition \ref{prop:exp}. Given $g_k^{j}\in F_k$, there is an index $g_n^j\in F_n$ such that $\mu_{n-f}(g_n^j)=g_k^{j}$. Then, $\forall i\in\{1,\dots,f\}$, \[F_k\ni\mu_{i\big(\frac{n-f}{f}\big)}(g_n^j)=\mu_{n-f}^{l_{i,f}^*}(g_n^j)=\mu_{n-f}^{l_{i,f}^*-1}(\mu_{n-f}(g_n^j))=\mu_{n-f}^{l_{i,f}^*-1}(g_k^j),\] that is, $\forall i\in\{1,\dots,f\}$, we have $\mu_{n-f}^{l_{i,f}^*-1}(F_k)\in F_k$. Since the $f$ indices in $F_k$ are equally spaced in the cycle of length $n-f$ of $\mu_{n-f}$, $f\big(l_{i,f}^*-1\big)$ has to be a multiple of $n-f$, that is, $f\big(l_{i,f}^*-1\big)=k_i(n-f)$, for some natural number $0\leq k_i<f$, $\forall i\in\{1,\dots,f\}$. This is equivalent to saying that \[f(l_{i,f}^*-1)=k_i(n-f)\Leftrightarrow l_{i,f}^*-1=k_i\Big(\tfrac{n-f}{f}\Big)\Leftrightarrow l_{i,f}^*=1+k_i\Big(\tfrac{n-f}{f}\Big),\] for some natural number $0\leq k_i<f$, $\forall i\in\{1,\dots,f\}$. Hence, the set $\{l_{i,f}^*:1\leq i\leq f\}$ is contained in $\Big\{1+j\Big(\frac{n-f}{f}\Big):0\leq j<f\Big\}$. Since these two sets must have the same cardinality, the result follows.
\end{proof}

\begin{remark}
We remark that the integers in $\{l_{i,f}^*:1\leq i\leq f\}$ still have to verify the conditions presented in assertion \textit{2.} in Theorem \ref{thr:thethr}, that is, $GCD(n-f, l_{i,f}^*)=1$, $1\leq l_{i,f}^*<n-f$, $\forall i\in\{1,\dots,f\}$. In fact, there are certain pairs of integers $(n,f)$ for which these conditions are not satisfied if $l_{i, f}^{\ast}$ has the form $1+\tfrac{i(n-f)}{f}$ with $0\leq i<f$, where $l_{0,f}^*:=l_{f,f}^*$. Thus the corresponding quandles of cyclic type of order $n$ with $f$ fixed points cannot exist. For example, there cannot be quandles of cyclic type of order $28$ with $7$ fixed points, but there can be quandles of cyclic type of order $6$ with $2$ fixed points. Therefore, from now on, we are only working with pairs of integers $(n,f)$ for which these conditions are satisfied.
\end{remark}

\subsection{\boldmath Quandles of Cyclic Type of Order $n$ with $f$ Fixed Points such that $n=3f$}\label{subsect:n=3f}

In this Subsection, we show there are no quandles of cyclic type of order $n$ with $f$ fixed points such that $n=3f$ for $f>2$. We also prove that the only quandle of cyclic type of order $6$ with $2$ fixed points, up to isomorphism, is $Q_6^2$.\par
Firstly, we prove a result which is a direct consequence of Proposition \ref{prop:exp}.

\begin{corollary}\label{cor:invperm}
Given a quandle of cyclic type $Q$ of order $n$ with $f$ fixed points such that $n=3f$, $\mu_{n-f}^{-1}=\mu_{2i}$, for a certain integer $i\in \{1,\dots,f\}$.
\end{corollary}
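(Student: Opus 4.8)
The plan is to deduce the claim directly from Proposition \ref{prop:exp}. First I would pin down the arithmetic of the case $n = 3f$: here $n - f = 2f$, so by Proposition \ref{prop:assind} the indices associate to $n - f$ are $i\big(\tfrac{n-f}{f}\big) = 2i$ for $i \in \{1, \dots, f-1\}$, and adjoining $n - f = 2f$ itself the relevant index set is $\{2i : 1 \le i \le f\}$. By assertion \textit{2.} in Theorem \ref{thr:thethr} together with Definition \ref{def:expassociates}, for each such index one has $\mu_{2i} = \mu_{n-f}^{l^{*}_{i,f}}$, where $1 \le l^{*}_{i,f} < n - f$ and $GCD(n-f, l^{*}_{i,f}) = 1$.

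Next I would apply Proposition \ref{prop:exp}, which for $n = 3f$ reads $\{ l^{*}_{i,f} : 1 \le i \le f \} = \big\{ 1 + j\big(\tfrac{n-f}{f}\big) : 0 \le j < f \big\} = \{ 1 + 2j : 0 \le j < f \} = \{1, 3, 5, \dots, 2f - 1\}$. In particular the value $2f - 1$ (corresponding to $j = f - 1$) occurs, so there is an index $i_{0} \in \{1, \dots, f\}$ with $l^{*}_{i_0, f} = 2f - 1$, hence $\mu_{2i_0} = \mu_{n-f}^{2f-1}$.

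Finally, since $\mu_{n-f}$ is the product of a single $(n-f)$-cycle and $f$ fixed points, its order in $S_n$ is exactly $n - f = 2f$; therefore $\mu_{n-f}^{2f-1} = \mu_{n-f}^{-1}$, and so $\mu_{2i_0} = \mu_{n-f}^{-1}$, which is precisely the assertion with $i = i_0$.

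I do not expect any real obstacle here: the statement is a bookkeeping consequence of Proposition \ref{prop:exp}. The only points that deserve a word are that $2f - 1$ genuinely appears among the exponents $1 + 2j$ (it does, with $j = f-1$), that $2f - 1 \equiv -1$ modulo the order $2f$ of $\mu_{n-f}$, and that the required coprimality $GCD(2f, 2f-1) = 1$ holds automatically — so that, for $n = 3f$, this particular exponent is never ruled out by the constraints in assertion \textit{2.} of Theorem \ref{thr:thethr}.
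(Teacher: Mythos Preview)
Your proposal is correct and follows essentially the same route as the paper: invoke Proposition~\ref{prop:exp} with $n=3f$ so that $\tfrac{n-f}{f}=2$, pick $j=f-1$ to obtain the exponent $2f-1=(n-f)-1\equiv -1 \pmod{n-f}$, and conclude that some $\mu_{2i_0}=\mu_{n-f}^{-1}$. The only differences are cosmetic: you spell out that the order of $\mu_{n-f}$ is $2f$ and that $GCD(2f,2f-1)=1$, which the paper leaves implicit.
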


\begin{proof}
By Proposition \ref{prop:exp}, $\{l_{i,f}^*:1\leq i\leq f\}=\Big\{1+j\Big(\frac{n-f}{f}\Big):0\leq j<f\Big\}$. Taking $j=f-1$, we conclude there exists an integer $i'\in \{1,\dots,f\}$ such that $l_{i',f}^*=1+(f-1)\Big(\frac{n-f}{f}\Big)$. As $n=3f$, we get \[l_{i',f}^*=l_{\frac{i'(n-f)}{f}, n-f}=l_{2i',2f}=1+2(f-1)=2f-1=(n-f)-1\equiv -1\mod n-f,\] that is, there exists an integer $i'\in \{1,\dots,f\}$ such that $\mu_{n-f}^{-1}=\mu_{2f}^{-1}=\mu_{2i'}$.
\end{proof}

Corollary \ref{cor:invperm} tells us that, for any quandle of cyclic type $Q$ of order $n$ with $f$ fixed points such that $n=3f$, $\mu_{n-f}^{-1}$ is a permutation of $Q$ and it is of the form $\mu_{n-f}^{-1}=\mu_{2f}^{-1}=\mu_{2i}$, for a certain integer $i\in \{1,\dots,f\}$. We now prove a lemma, similar to Lemma \ref{lem:dif1}, which also has some very useful consequences.

\begin{lemma}\label{lem:dif2}
Given distinct indices $a,b\in F_n$, $\mu_{n-f}^{-1}(b)-\mu_{n-f}^{-1}(a)\in F_{n-f}$, where this index is read modulo $n-f$.
\end{lemma}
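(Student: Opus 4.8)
The plan is to mirror the proof of Lemma~\ref{lem:dif1} but with $\mu_{n-f}^{-1}$ in place of $\mu_{n-f}$, exploiting the crucial structural fact supplied by Corollary~\ref{cor:invperm}: when $n=3f$, the inverse $\mu_{n-f}^{-1}$ is itself one of the defining permutations of the quandle, namely $\mu_{n-f}^{-1}=\mu_{2i_0}$ for some $i_0\in\{1,\dots,f\}$. This is what makes the statement tractable, because it lets us apply assertions \textit{2.} and \textit{3.} of Theorem~\ref{thr:thethr} to $\mu_{n-f}^{-1}$ as well.

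First I would fix distinct $a,b\in F_n$. By assertion \textit{2.} in Theorem~\ref{thr:thethr}, since $a$ and $b$ are associate indices we have $\mu_b=\mu_a^{l_{b,a}}$ with $\gcd(n-f,l_{b,a})=1$. Next I would invoke assertion \textit{5.} of Theorem~\ref{thr:thethr} (the ``$\mu_{n-f}^{-1}$ version'' of assertion \textit{4.}), which gives $\mu_{n-f}^{-1}\mu_a\mu_{n-f}=\mu_n^{\mu_{n-f}^{-1}(a)}\mu_{n-f}\mu_n^{-\mu_{n-f}^{-1}(a)}=\mu_{\mu_{n-f}^{-1}(a)}$ by assertion \textit{3.}, and likewise for $b$. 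Raising the $a$-identity to the power $l_{b,a}$ and using $\mu_b=\mu_a^{l_{b,a}}$ on the left, I would obtain
\[
\mu_{\mu_{n-f}^{-1}(b)}=\mu_{n-f}^{-1}\mu_b\mu_{n-f}=\mu_{n-f}^{-1}\mu_a^{l_{b,a}}\mu_{n-f}=\bigl(\mu_{n-f}^{-1}\mu_a\mu_{n-f}\bigr)^{l_{b,a}}=\mu_{\mu_{n-f}^{-1}(a)}^{\,l_{b,a}}.
\]
Then, using assertion \textit{3.} to rewrite both $\mu_{\mu_{n-f}^{-1}(a)}$ and $\mu_{\mu_{n-f}^{-1}(b)}$ as conjugates of $\mu_{n-f}$ by powers of $\mu_n$, this collapses to $\mu_{n-f}^{l_{b,a}}=\mu_n^{\,\mu_{n-f}^{-1}(b)-\mu_{n-f}^{-1}(a)}\mu_{n-f}\mu_n^{-(\mu_{n-f}^{-1}(b)-\mu_{n-f}^{-1}(a))}=\mu_{\mu_{n-f}^{-1}(b)-\mu_{n-f}^{-1}(a)}$, reading the index modulo $n-f$. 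Since $1\le l_{b,a}<n-f$, the permutations $\mu_{n-f}^{l_{b,a}}$ and $\mu_{n-f}$ share the same fixed-point set, so the index $\mu_{n-f}^{-1}(b)-\mu_{n-f}^{-1}(a)$ must lie in $F_{n-f}$, which is the claim.

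The one subtlety I would be careful about is justifying that $\mu_{n-f}^{-1}(a),\mu_{n-f}^{-1}(b)\notin F_n\cup F_{n-f}$ so that the conjugation identities from Theorem~\ref{thr:thethr} actually apply with the roles of the points as written; this is exactly the content of the parenthetical remark in the proof of assertion \textit{5.} (if $\mu_{n-f}^{-1}(a)\in F_n$ then $a=\mu_{n-f}(\mu_{n-f}^{-1}(a))\in\mu_{n-f}(F_n)\subset$ a fixed-point set disjoint from $F_n$ by Lemma~\ref{lemma:ups}, a contradiction), and I would cite it rather than reprove it. The main obstacle, such as it is, is purely bookkeeping: keeping the modular reductions consistent and making sure the exponent $l_{b,a}$ is handled the same way on both sides; the conceptual engine is entirely supplied by Corollary~\ref{cor:invperm} together with assertions \textit{2.}, \textit{3.}, and \textit{5.} of Theorem~\ref{thr:thethr}.
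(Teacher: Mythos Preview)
Your proof is correct and essentially identical to the paper's: both use assertion \textit{5.} of Theorem~\ref{thr:thethr} applied to $a$ and $b$, raise to the power $l_{b,a}$ via assertion \textit{2.}, and then cancel the outer $\mu_n$-conjugations to obtain $\mu_{n-f}^{l_{b,a}}=\mu_{\mu_{n-f}^{-1}(b)-\mu_{n-f}^{-1}(a)}$, concluding by comparing fixed-point sets.

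One remark on your framing: you describe Corollary~\ref{cor:invperm} (that $\mu_{n-f}^{-1}$ is one of the quandle permutations when $n=3f$) as the ``crucial structural fact'' and ``conceptual engine,'' but in fact neither your argument nor the paper's ever uses it. The entire proof runs on assertions \textit{2.}, \textit{3.}, and \textit{5.} of Theorem~\ref{thr:thethr}, all of which hold for every $n>2f$; in particular Lemma~\ref{lem:dif2} is valid in that generality, not only for $n=3f$. The identity you actually need about $\mu_{n-f}^{-1}$ is precisely assertion \textit{5.}, which was derived without any appeal to $\mu_{n-f}^{-1}$ being a quandle permutation.
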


\begin{proof}
Let $a,b\in F_n$. By assertion \textit{2.} in Theorem \ref{thr:thethr}, we have $\mu_b=\mu_a^{l_{b,a}}$. Hence, using assertion \textit{5.} in Theorem \ref{thr:thethr}, \[\mu_n^{\mu_{n-f}^{-1}(a)}\mu_{n-f}^{l_{b,a}}\mu_n^{-\mu_{n-f}^{-1}(a)}=\Big(\mu_n^{\mu_{n-f}^{-1}(a)}\mu_{n-f}\mu_n^{-\mu_{n-f}^{-1}(a)}\Big)^{l_{b,a}}=\Big(\mu_{n-f}^{-1}\mu_a\mu_{n-f}\Big)^{l_{b,a}}=\]\[=\mu_{n-f}^{-1}\mu_a^{l_{b,a}}\mu_{n-f}=\mu_{n-f}^{-1}\mu_b\mu_{n-f}=\mu_n^{\mu_{n-f}^{-1}(b)}\mu_{n-f}\mu_n^{-\mu_{n-f}^{-1}(b)},\] which implies, by assertion \textit{3.} in Theorem \ref{thr:thethr}, that \[\mu_{n-f}^{l_{b,a}}=\mu_n^{\mu_{n-f}^{-1}(b)-\mu_{n-f}^{-1}(a)}\mu_{n-f}\mu_n^{-(\mu_{n-f}^{-1}(b)-\mu_{n-f}^{-1}(a))}=\mu_{\mu_{n-f}^{-1}(b)-\mu_{n-f}^{-1}(a)},\] where the index $\mu_{n-f}^{-1}(b)-\mu_{n-f}^{-1}(a)$ is read modulo $n-f$. Since $1\leq l_{b,a}<n-f$, then $\mu_{n-f}^{l_{b,a}}$ and $\mu_{n-f}$ have the same set of fixed points. Thus, the equalities above imply that, modulo $n-f$, $\mu_{n-f}^{-1}(b)-\mu_{n-f}^{-1}(a)\in F_{n-f}$.
\end{proof}

\begin{corollary}\label{cor:dif3}
Given $a,b\in F_n$, $\mu_{n-f}(b)-\mu_{n-f}(a)=\mu_{n-f}^{-1}(b)-\mu_{n-f}^{-1}(a)$, where these two indices are read modulo $n-f$.
\end{corollary}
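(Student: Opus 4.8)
The plan is to squeeze a little more out of the proofs of Lemmas \ref{lem:dif1} and \ref{lem:dif2} than their statements record, and then to pass from ``equal permutations'' to ``equal indices'' via Corollary \ref{cor:difper}.

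First I would dispose of the case $a=b$, where both sides are $0$. So assume $a\neq b$ and set, reading everything modulo $n-f$, $c:=\mu_{n-f}(b)-\mu_{n-f}(a)$ and $c':=\mu_{n-f}^{-1}(b)-\mu_{n-f}^{-1}(a)$. Inspecting the proof of Lemma \ref{lem:dif1}, one sees that it actually produces the identity $\mu_{n-f}^{l_{b,a}}=\mu_{c}$, not merely the membership $c\in F_{n-f}$; likewise the proof of Lemma \ref{lem:dif2} produces $\mu_{n-f}^{l_{b,a}}=\mu_{c'}$. I would recall (or quickly re-derive) these two identities and combine them to conclude $\mu_{c}=\mu_{c'}$.

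Next I would locate $c$ and $c'$ inside $F_{n-f}$. By Lemma \ref{lem:dif1} and Lemma \ref{lem:dif2}, $c,c'\in F_{n-f}$, and by Proposition \ref{prop:assind} one has $F_{n-f}=\{\,i(n-f)/f : 1\leq i\leq f\,\}$ (read modulo $n-f$). Thus $c=i_0(n-f)/f$ and $c'=j_0(n-f)/f$ for some $i_0,j_0\in\{1,\dots,f\}$, and the equality $\mu_{c}=\mu_{c'}$ reads $\mu_{i_0(n-f)/f}=\mu_{j_0(n-f)/f}$. By Corollary \ref{cor:difper} these permutations are pairwise distinct for distinct values of the subscript, so $i_0=j_0$, hence $c=c'$, which is exactly the assertion.

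There is no real obstacle here beyond the bookkeeping: the subtle point is simply that the \emph{statements} of Lemmas \ref{lem:dif1} and \ref{lem:dif2} are weaker than what is needed, so one must go back to their proofs for the equalities $\mu_{n-f}^{l_{b,a}}=\mu_{c}$ and $\mu_{n-f}^{l_{b,a}}=\mu_{c'}$; once those are in hand, Corollary \ref{cor:difper} upgrades ``same permutation'' to ``same index'' immediately. I would also note in passing that the modulo-$(n-f)$ conventions in the two lemmas agree (representatives taken in the length-$(n-f)$ cycle of $\mu_{n-f}$), so that $\mu_c$ and $\mu_{c'}$ genuinely denote quandle permutations and the comparison is legitimate.
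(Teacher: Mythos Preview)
Your argument is correct and follows the same route as the paper: extract from the proofs of Lemmas \ref{lem:dif1} and \ref{lem:dif2} the actual equalities $\mu_{\mu_{n-f}(b)-\mu_{n-f}(a)}=\mu_{n-f}^{l_{b,a}}=\mu_{\mu_{n-f}^{-1}(b)-\mu_{n-f}^{-1}(a)}$, then upgrade equality of permutations to equality of indices. The only cosmetic difference is that the paper proves directly that \emph{all} of $\mu_1,\dots,\mu_{n-f}$ are pairwise distinct (via $\mu_k(a)=\mu_{n-f}(a)+k$ for $a\in F_n$), whereas you first localize $c,c'\in F_{n-f}$ and then cite Corollary \ref{cor:difper}; both rest on the same one-line computation.
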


\begin{proof}
Any two permutations from $\mu_1$ to $\mu_{n-f}$ are distinct. Indeed, given an index $a\in F_n$ and for each index $k\in\{1,\dots,n-f\}$, we have, by assertion \textit{3.} in Theorem \ref{thr:thethr}, \[\mu_k(a)=\mu_n^k\mu_{n-f}\mu_n^{-k}(a)=\mu_n^k\mu_{n-f}(a)=\mu_{n-f}(a)+k.\] Since we also have, by Lemmas \ref{lem:dif1} and \ref{lem:dif2}, that \[\mu_{\mu_{n-f}(b)-\mu_{n-f}(a)}=\mu_{n-f}^{l_{a,b}}=\mu_{\mu_{n-f}^{-1}(b)-\mu_{n-f}^{-1}(a)},\] where the indices $\mu_{n-f}(b)-\mu_{n-f}(a)$ and $\mu_{n-f}^{-1}(b)-\mu_{n-f}^{-1}(a)$ are read modulo $n-f$, we get \[\mu_{n-f}(b)-\mu_{n-f}(a)=\mu_{n-f}^{-1}(b)-\mu_{n-f}^{-1}(a),\] where these two indices are read modulo $n-f$.
\end{proof}

We now have all the results we need to prove there are no quandles of cyclic type of order $n$ with $f$ fixed points such that $n=3f$ for $f>2$. This is the result we state in the following proposition.

\begin{prop}\label{prop:cyclic1}
There are no quandles of cyclic type of order $n$ with $f$ fixed points such that $n=3f$ for $f>2$.
\end{prop}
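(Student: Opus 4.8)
The plan is to assume, for contradiction, that such a quandle $Q$ exists for some $f>2$, and to exploit the conjugation relations of Theorem \ref{thr:thethr} once the combinatorial shape of the permutations in play has been pinned down (here $n-f=2f$).

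First I would normalize $\mu_n=(1\,2\,\cdots\,2f)(2f+1)\cdots(3f)$ by assertion 1 of Theorem \ref{thr:thethr}, so $F_n=\{2f+1,\dots,3f\}$. By Proposition \ref{prop:assind} and Corollary \ref{cor:assind}, the indices associate to $2f$ are the even numbers, $F_{2f}=\{2,4,\dots,2f\}$, and $F_i=i+F_{2f}\pmod{2f}$; hence $F_i=\{1,3,\dots,2f-1\}$ for $i$ odd and $F_i=\{2,4,\dots,2f\}$ for $i$ even. Then I would record two structural facts. (i) For $a\in F_n$, Proposition \ref{prop:assocind} forces $F_a=F_n$, so $\mu_a$ fixes $\{2f+1,\dots,3f\}$ and is a single $2f$-cycle on $\{1,\dots,2f\}$; since $\mu_n(a)=a$, Theorem \ref{thm:equivdef} gives $\mu_a=\mu_n\mu_a\mu_n^{-1}$, i.e. $\mu_a$ commutes with the $2f$-cycle $\mu_n$, so $\mu_a=\mu_n^{r_a}$ with $\gcd(r_a,2f)=1$. (ii) By Corollary \ref{cor:fplets}, $\mu_{2f}(F_n)=F_k$ with $k\notin F_n\cup F_{2f}$, so $k$ is odd and $\mu_{2f}$ maps $F_n$ bijectively onto $\{1,3,\dots,2f-1\}$; write $\phi:=\mu_{2f}|_{F_n}$. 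Counting then forces $\mu_{2f}$ to send $\{1,3,\dots,2f-1\}$ onto $F_n$ as well, so its $2f$-cycle alternates between these two sets while the even numbers stay fixed. Finally, Corollary \ref{cor:invperm} gives $\mu_{2f}^{-1}=\mu_{2i_0}$ for some $i_0$ (and $i_0\neq f$, as a $2f$-cycle with $2f\geq 6$ is not an involution), so assertion 3 of Theorem \ref{thr:thethr} yields $\mu_{2f}^{-1}=\mu_n^{2i_0}\mu_{2f}\mu_n^{-2i_0}$; evaluating at $b\in F_n$ gives $\mu_{2f}^{-1}(b)\equiv\phi(b)+2i_0\pmod{2f}$.

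The core step is to feed assertion 4 of Theorem \ref{thr:thethr} — which for $a\in F_n$ reads $\mu_{2f}\mu_a\mu_{2f}^{-1}=\mu_n^{\phi(a)}\mu_{2f}\mu_n^{-\phi(a)}$, i.e. $\mu_{2f}\mu_n^{r_a}\mu_{2f}^{-1}=\mu_n^{\phi(a)}\mu_{2f}\mu_n^{-\phi(a)}$ — into well-chosen evaluation points, tracking parities (even numbers are fixed by $\mu_{2f}$; an odd number of $\{1,\dots,2f\}$ lies in its cycle and is sent into $F_n$; elements of $F_n$ are fixed by $\mu_n$, while $\mu_n$ shifts $\{1,\dots,2f\}$ by $1$). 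Evaluating at an even $z$, both sides become $\mu_{2f}$ of an odd number, and injectivity of $\mu_{2f}$ gives $r_a\equiv-\phi(a)\pmod{2f}$. Evaluating at $b\in F_n$, the left side collapses to $\mu_{2f}^{-1}(b)+r_a$ and the right side to $\phi(a)+\phi(b)$, so $\mu_{2f}^{-1}(b)=\phi(a)+\phi(b)-r_a\equiv 2\phi(a)+\phi(b)\pmod{2f}$. Comparing with $\mu_{2f}^{-1}(b)\equiv\phi(b)+2i_0$ yields $2\phi(a)\equiv 2i_0\pmod{2f}$, i.e. $\phi(a)\equiv i_0\pmod f$ for every $a\in F_n$.

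That is the contradiction: $\phi$ is a bijection from $F_n$ onto $\{1,3,\dots,2f-1\}$, and for $f>2$ this target meets more than one residue class modulo $f$ (already $1\not\equiv 3\pmod f$ once $f\geq 3$), so $\phi(a)\bmod f$ cannot be constant; hence no such quandle exists when $f>2$. (For $f=2$ the target is $\{1,3\}\equiv\{1\}\pmod 2$, a single class, so the argument correctly produces no contradiction, as it must since $Q_6^2$ exists.) I expect the main obstacle to be the preparation — the identification $\mu_a=\mu_n^{r_a}$ and, above all, spotting that assertion 4 should be tested on the even numbers and on $F_n$ — together with keeping the ``read modulo $2f$'' convention for permutation indices straight against the genuine quandle elements $2f+1,\dots,3f$; once $r_a\equiv-\phi(a)$ is in hand, the remainder is a one-line computation.
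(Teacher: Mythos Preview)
Your argument is correct and complete. It differs from the paper's in its endgame: the paper tracks four specific elements in the long cycle of $\mu_{n-f}$ (an $a\in F_n$, its image $j$ and preimage $k$ under $\mu_{n-f}$, and a further $b=\mu_{n-f}(j)$), uses Corollary~\ref{cor:dif3} (built on assertions~4 and~5 of Theorem~\ref{thr:thethr}) to show that $(a\ j\ b\ k)$ is already a $4$-cycle, and concludes $n-f=4$, i.e.\ $f=2$. You instead exploit the clean identification $\mu_a=\mu_n^{r_a}$ for $a\in F_n$ and evaluate assertion~4 directly at even elements and at elements of $F_n$; this bypasses Lemma~\ref{lem:dif2} and Corollary~\ref{cor:dif3} entirely and produces the uniform congruence $\phi(a)\equiv i_0\pmod f$, which is visibly incompatible with $\phi$ being a bijection onto $\{1,3,\dots,2f-1\}$ once $f>2$. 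Your route is somewhat more economical in its prerequisites and the contradiction is more transparent; the paper's route is more concrete and pins down the actual cycle structure, which makes the $f=2$ boundary case (the octahedron quandle) appear explicitly rather than as the sole surviving residue class.
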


\begin{proof}
Suppose $Q$ is a quandle of cyclic type of order $n$ with $f$ fixed points such that $n=3f$. First of all, we note that $Q=F_{1}\cup F_{n-f}\cup F_{n}$. We know $\mu_{n-f}(F_{n-f})=F_{n-f}$ and we have $\mu_{n-f}(F_n)=F_1$ by Corollary \ref{cor:invperm}. Since $\mu_{n-f}$ is a bijection, we have $\mu_{n-f}(F_1)=F_n$. Now, let $a\in F_n$ and suppose $\mu_{n-f}(a)=j$ and $\mu_{n-f}^{-1}(a)=k$, where $j,k\in F_1$. We note that $j\neq k$, otherwise $\mu_{n-f}$ would have a cycle of length $2$. This is not possible as the length of this cycle is $n-f=3f-f=2f\geq 4$. For each $i\in\{1,\dots,n-f\}$, we have by assertion \textit{3.} in Theorem \ref{thr:thethr}, reading the indices modulo $n-f$,
\begin{equation}\label{eq:2}
\mu_i(a)=\mu_n^i\mu_{n-f}\mu_n^{-i}(a)=\mu_n^i\mu_{n-f}(a)=\mu_n^i(j)=j+i.
\end{equation}
and, again by assertion \textit{3.} in Theorem \ref{thr:thethr}, reading the indices modulo $n-f$,
\begin{equation}\label{eq:3}
\mu_i(k+i)=\mu_n^i\mu_{n-f}\mu_n^{-i}(k+i)=\mu_n^i\mu_{n-f}(k)=\mu_n^i(a)=a,
\end{equation}
By Corollary \ref{cor:invperm}, we know that there is an index $i'\in\{1,\dots,f\}$ such that $\mu_{2i'}=\mu_{n-f}^{-1}$. Therefore, we have that $\mu_{2i'}(a)=k$ and $\mu_{2i'}(j)=a$. Since $\mu_{2i'}(a)=j+2i'$ by (\ref{eq:2}), we conclude that $k=j+2i'$. Hence, $\mu_{2i'}(j)=\mu_{2i'}(k-2i')$ and we also conclude $\mu_{2i'}(k-2i')=a$. But $\mu_{2i'}(k+2i')=a$ by (\ref{eq:3}), which means that we have either $2i'=n-f$ or $2i'=(n-f)/2$. Since $2i'$ cannot be equal to $n-f$, as this forces $k$ to be equal to $j$, we then have $2i'=(n-f)/2$. Now, suppose $\mu_{n-f}(j)=b\in F_n$, with $b\neq a$. Noting that we have $\mu_{n-f}(a)-\mu_{n-f}(b)=\mu_{n-f}^{-1}(a)-\mu_{n-f}^{-1}(b)$ by Corollary \ref{cor:dif3}, we get \[\mu_{n-f}^{-1}(b)-\mu_{n-f}(b)=\mu_{n-f}^{-1}(a)-\mu_{n-f}(a)=k-j=\tfrac{n-f}{2}.\] Since $\mu_{n-f}^{-1}(b)=j$, this forces $\mu_{n-f}(b)$ to be equal to $k$. Therefore, $\mu_{n-f}$ has a cycle $\tau$ of length $4$, where $\tau=(a\,\,j\,\,b\,\,k)$. Hence, for any quandle of cyclic type of order $n$ with $f$ fixed points such that $n=3f$, $f=2$ and $n=6$ and, thus, there are no quandles of cyclic type of order $n$ with $f$ fixed points such that $n=3f$ for $f>2$.
\end{proof}

\begin{corollary}\label{cor:q62}
There is only one quandle of cyclic type of order $6$ with $2$ fixed points, up to isomorphism.
\end{corollary}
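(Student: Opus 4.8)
The plan is to use the structural results of this section to reconstruct, up to relabeling, all six permutations of such a quandle, and to see that the reconstruction is forced. Throughout, $n=6$, $f=2$, $n-f=4$. By Assertion~1 of Theorem~\ref{thr:thethr} we may assume $\mu_6=(1\,2\,3\,4)(5)(6)$. Since $6=3\cdot 2$ (Corollary~\ref{cor:order}), Proposition~\ref{prop:assind} and Corollary~\ref{cor:assind} force the classes of associate indices to be $\{1,3\}$, $\{2,4\}$ and $\{5,6\}$, so $F_1=F_3=\{1,3\}$, $F_2=F_4=\{2,4\}$, $F_5=F_6=\{5,6\}$; in particular $\mu_4$ fixes $2$ and $4$ and acts as a $4$-cycle on $\{1,3,5,6\}$. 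Specializing the argument in the proof of Proposition~\ref{prop:cyclic1} to $f=2$, this $4$-cycle alternates between $F_6=\{5,6\}$ and $F_1=\{1,3\}$, hence $\mu_4$ is one of $(5\,1\,6\,3)(2)(4)$ or $(5\,3\,6\,1)(2)(4)$.

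Next I would note that $\mu_6$ and $\mu_4$ determine the whole quandle: Assertion~3 of Theorem~\ref{thr:thethr} gives $\mu_k=\mu_6^k\mu_4\mu_6^{-k}$ for $k=1,2,3$, and choosing the index $j\in\{1,3\}$ with $\mu_4(j)=5$ and applying $\mu_{\mu_4(j)}=\mu_4\mu_j\mu_4^{-1}$ produces $\mu_5$. So at most two quandles arise. The choice $\mu_4=(5\,3\,6\,1)(2)(4)$ is eliminated by a short computation: here $\mu_4(3)=6$, so the relation $\mu_{\mu_4(3)}=\mu_4\mu_3\mu_4^{-1}$ reads $\mu_6=\mu_4\mu_3\mu_4^{-1}$; but $\mu_3=\mu_6^3\mu_4\mu_6^{-3}=(5\,2\,6\,4)(1)(3)$, whence $\mu_4\mu_3\mu_4^{-1}=(1\,4\,3\,2)(5)(6)\neq(1\,2\,3\,4)(5)(6)=\mu_6$, a contradiction. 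Therefore $\mu_4=(5\,1\,6\,3)(2)(4)$.

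Finally I would compute the remaining permutations from $\mu_6=(1\,2\,3\,4)(5)(6)$ and $\mu_4=(5\,1\,6\,3)(2)(4)$ using the formulas above, and check that the resulting six permutations are exactly those obtained by reading the columns of Table~\ref{table:1}; since $Q_6^2$ was already observed to be a quandle of cyclic type of order $6$ with $2$ fixed points, this also takes care of existence. Hence every quandle of cyclic type of order $6$ with $2$ fixed points is isomorphic (indeed, after the initial relabeling, equal) to $Q_6^2$, which is the claim. The only slightly delicate points are importing the alternation of the $4$-cycle of $\mu_4$ from the proof of Proposition~\ref{prop:cyclic1} and the conjugation bookkeeping that kills one of the two options and matches the other with $Q_6^2$; everything is a finite, mechanical check.
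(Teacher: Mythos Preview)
Your proof is correct and follows essentially the same route as the paper's: fix $\mu_6$, determine the fixed-point classes, reduce $\mu_4$ to two candidates, eliminate one, and reconstruct the rest from $\mu_6$ and $\mu_4$. The only cosmetic differences are that the paper obtains the two candidates for $\mu_4$ directly from Corollary~\ref{cor:fplets} (rather than importing the alternation from the proof of Proposition~\ref{prop:cyclic1}) and eliminates the bad candidate by invoking Assertion~4 of Theorem~\ref{thr:thethr}, whereas you do an explicit check of the quandle relation $\mu_{\mu_4(3)}=\mu_4\mu_3\mu_4^{-1}$; your explicit computation is exactly the kind of ``straightforward calculation'' the paper alludes to.
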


\begin{proof}
$\mu_6=(1\,\,2\,\,3\,\,4)(5)(6)$, up to isomorphism, by assertion \textit{1.} in Theorem \ref{thr:thethr} and then $F_5=\{5,6\}=F_6$. $F_2=\{2,4\}=F_4$ by Proposition \ref{prop:assind}, and so $F_1=\{1,3\}=F_3$. Now, by Corollary \ref{cor:fplets}, we can either have $\mu_4=(2)(4)(1\,\,6\,\,3\,\,5)$ or $\mu_4=(2)(4)(1\,\,5\,\,3\,\,6)$. However, straightforward calculations show that the latter does not satisfy assertion \textit{4.} in Theorem \ref{thr:thethr}, and hence $\mu_4=(2)(4)(1\,\,6\,\,3\,\,5)$. By Theorem \ref{thr:thethr}, we can write $\mu_1$, $\mu_2$, $\mu_3$ and $\mu_5$ as functions of $\mu_6$ and $\mu_4$, thus $\mu_4$ determines one single quandle, which is precisely $Q_6^2$. Its multiplication table is displayed in Table \ref{table:1}.
\end{proof}

\subsection{\boldmath Quandles of Cyclic Type of Order $n$ with $f$ Fixed Points such that $n>3f$}\label{subsect:n>3f}
In this Subsection, we use the results from the previous Subsections to show that there are no quandles of cyclic type of order $n$ with $f$ fixed points such that $n=cf$ for $c>3$. This is the result we state in the following proposition, where in its proof the free indices are read modulo $n-f$.

\begin{prop}\label{prop:cyclic2}
There are no quandles of cyclic type of order $n$ with $f$ fixed points such that $n=cf$, for $c>3$.
\end{prop}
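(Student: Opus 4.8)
Suppose, for contradiction, that a quandle of cyclic type of order $n=cf$ with $f$ fixed points exists for some $c>3$; set $d:=\tfrac{n-f}{f}=c-1\geq 3$. By Corollaries \ref{cor:order} and \ref{cor:assind} the underlying set is the disjoint union of $c$ associate classes of size $f$ — the fixed-point set $F_{n-f}$ of $\mu_{n-f}$, the class $F_n$, and $c-2$ further classes — and the unique non-singular cycle $\tau$ of $\mu_{n-f}$, which has length $n-f=df$, is exactly the union of the $c-1$ classes different from $F_{n-f}$. I would proceed in two phases. In \emph{Phase~$1$} I would show that $\mu_{n-f}$ permutes the $c-1$ classes contained in $\tau$; since $\langle\mu_{n-f}|_{\tau}\rangle$ is cyclic of order $df$ acting regularly on $\tau$, its only block system with blocks of size $f$ is the partition into orbits of $\mu_{n-f}^{\,d}$, so each of the $c-1$ classes must be one such orbit — equivalently, writing $\tau=(x_0\,x_1\,\cdots\,x_{df-1})$, the class through $x_t$ is $\{x_{t+jd}:0\leq j<f\}$ and $\mu_{n-f}$ carries $x_s$ to $x_{s+1}$, cyclically permuting the $c-1$ classes. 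In \emph{Phase~$2$} I would use this rigid description to rerun the difference calculus of Proposition \ref{prop:cyclic1} and force $\mu_{n-f}$ to have a non-singular cycle strictly shorter than $df$ (for $c=3$ this was the $4$-cycle appearing there), contradicting that $\mu_{n-f}$ has exactly one non-singular cycle, of length $df=(c-1)f\geq 6$.

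For \emph{Phase~$1$}: $F_{n-f}$ is fixed pointwise by $\mu_{n-f}$, and by Corollary \ref{cor:fplets} the class $F_n$ is carried by $\mu_{n-f}$ onto a class $F_{k_0}\subset\tau$, so only the ``interior'' classes $F_r$ with $r\not\equiv 0\pmod d$ remain. That the rigid picture is already correct for $F_n$ can be seen from Corollary \ref{cor:assper} (which gives $\mu_{id}(F_n)=F_{k_0}$ for each associate index $id$ of $n-f$) combined with Proposition \ref{prop:exp} and Definition \ref{def:expassociates}: writing $\mu_{id}=\mu_{n-f}^{\,l^{\ast}_{i,f}}$ with the exponents running over $\{1+jd:0\leq j<f\}$, one gets $\mu_{n-f}^{\,jd}(F_{k_0})=F_{k_0}$ for $0\leq j<f$, which forces $F_{k_0}$, and hence $F_n=\mu_{n-f}^{-1}(F_{k_0})$, to be single orbits of $\mu_{n-f}^{\,d}$ on $\tau$. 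It then remains to prove the analogue of Lemma \ref{lem:dif1} for an interior class: if $a,b\in F_r$ then $\mu_{n-f}(b)-\mu_{n-f}(a)\in F_{n-f}$ modulo $n-f$; together with $|\mu_{n-f}(F_r)|=f$ this forces $\mu_{n-f}(F_r)$ to be a class, so the partition of $\tau$ into classes is $\langle\mu_{n-f}\rangle$-invariant and the block-system description follows.

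For \emph{Phase~$2$}: the proofs of Lemma \ref{lem:dif2} and Corollary \ref{cor:dif3} use only assertions \textit{2.}, \textit{3.} and \textit{5.} of Theorem \ref{thr:thethr}, hence remain valid for all $c\geq 3$; in particular $\mu_{n-f}(a)-\mu_{n-f}^{-1}(a)$ has, modulo $n-f$, a value $\delta$ independent of $a\in F_n$. Feeding this constancy — together with $\mu_i(a)=\mu_{n-f}(a)+i$ (assertion \textit{3.}), assertion \textit{6.} of Theorem \ref{thr:thethr}, and the interleaving established in Phase~$1$ — into the chain of equalities of the proof of Proposition \ref{prop:cyclic1}, with the identity $\mu_{n-f}^{-1}=\mu_{2i'}$ used there (Corollary \ref{cor:invperm}, available only when $c=3$) replaced by the constancy of $\delta$ and by the one-step shift of the block system, should again close the $\mu_{n-f}$-orbit of a fixed $a\in F_n$ into a cycle whose length is bounded independently of $c$. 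Since $|\tau|=df=(c-1)f\geq 6$ for $c\geq 4$ and $f\geq 2$, this is the required contradiction.

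The main obstacle is the interior-class step of Phase~$1$: showing that $\mu_{n-f}$ sends each $F_r$ with $r\not\equiv 0\pmod d$ onto a class. For such an $F_r$ there is no index $i$ with $F_i=F_r$, so the conjugation identity of assertion \textit{4.} of Theorem \ref{thr:thethr} — whose proof relied on Lemma \ref{lemma:ups} — is not directly available. The remedy I would use is to transport the already-known action of $\mu_{n-f}$ on $F_n$ and on $F_{n-f}$ along $\tau$, moving from one class to the next around the cycle by means of the relations $\mu_r=\mu_n^{\,r}\mu_{n-f}\mu_n^{-r}$ and $\mu_{\mu_{n-f}(r)}=\mu_{n-f}\mu_r\mu_{n-f}^{-1}$. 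Once the block-system structure is in hand, the concluding computation of Phase~$2$ is a routine, if somewhat technical, rerun of the $n=3f$ argument.
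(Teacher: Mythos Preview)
Your proposal is a sketch with a real gap in Phase~2. You assert that the difference calculus ``should again close the $\mu_{n-f}$-orbit of a fixed $a\in F_n$ into a cycle whose length is bounded independently of $c$'', but you do not carry this out, and it is not clear that it can be done along the lines you suggest: the $c=3$ proof hinges on Corollary~\ref{cor:invperm}, which says that $\mu_{n-f}^{-1}$ is itself one of the associate permutations $\mu_{2i'}$ --- a statement that is false for $c>3$. The replacement you offer, the constancy of $\delta=\mu_{n-f}(a)-\mu_{n-f}^{-1}(a)$ from Corollary~\ref{cor:dif3}, is strictly weaker and does not by itself pin down where $\mu_{n-f}^{-1}(a)$ sits among the $\mu_i(a)$; without that, the chain of equalities (\ref{eq:2})--(\ref{eq:3}) in Proposition~\ref{prop:cyclic1} has no obvious analogue. (Your ``main obstacle'' in Phase~1, by contrast, is not an obstacle at all: for $r\in\{1,\dots,n-f\}$ the quandle relation $\mu_{\mu_{n-f}(r)}=\mu_{n-f}\mu_r\mu_{n-f}^{-1}$ gives $F_{\mu_{n-f}(r)}=\mu_{n-f}(F_r)$ at once, so $\mu_{n-f}$ permutes the classes --- your claim that ``there is no index $i$ with $F_i=F_r$'' is simply wrong, since $i=r$ works.)

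The paper's argument is quite different and does not try to produce a short cycle. It exploits assertion~\textit{6.} of Theorem~\ref{thr:thethr} much more directly: for $a\in F_n$ one computes $\tau^{k_m}(a)=\mu_{n-f}(a)-\mu_{n-f}(m)$, whence the $n-2f$ values $k_m$ are pairwise distinct; a brief size estimate (using Lemma~\ref{lemma:exp}) shows each $k_m\notin F_{n-f}$, so the $k_m$ exhaust $\{1,\dots,n-f\}\setminus F_{n-f}$. The role of the hypothesis $c>3$ is precisely that $(n-f)/f\geq 3$, so $2\notin F_{n-f}$, and hence some $m$ gives $k_m=2$. This yields $\mu_{n-f}(a)-\mu_{n-f}(b)=\mu_{n-f}^{2}(a)-\mu_{n-f}^{2}(b)$ for all $a,b\in F_n$, and a short computation with assertion~\textit{3.} then shows $\mu_{\mu_{n-f}(b)-\mu_{n-f}(a)}(\mu_{n-f}(b))=\mu_{n-f}(\mu_{n-f}(b))$: two \emph{distinct} associate permutations of $\mu_{n-f}$ agree at a point of $C_{n-f}$, contradicting Corollary~\ref{cor:difper}. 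The idea you are missing is this passage to $k_m=2$ via assertion~\textit{6.}; it replaces the search for a short cycle with a one-line coincidence of associate permutations.
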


\begin{proof}
Let $\mu_{n-f}(F_n)=F_k$, in accordance with Corollary \ref{cor:fplets}. We know the indices in $F_k$ belong to the cycle of length $n-f$ of $\mu_{n-f}$, they are all equally spaced in this cycle by Lemma \ref{lemma:exp} and their inverse images are in $F_n$. Therefore, the indices in $F_n$ are also equally spaced in the cycle of length $n-f$ of $\mu_{n-f}$. Now, we have that for each $m\in\{1,\dots, n-f\}\setminus\{\mu_{n-f}^{-1}(n-f+1), \dots,\mu_{n-f}^{-1}(n)\}$, there is an integer $1\leq k_m < n-f$ such that $\mu_n^{-\mu_{n-f}(m)}\mu_{n-f}\mu_n^m=\sigma\tau^{k_m}$, by assertion \textit{6.} in Theorem \ref{thr:thethr}. We now prove that each $m$ gives rise to a different $k_m$. In fact, given $a\in F_n$, \[\tau^{k_m}(a)=\sigma\tau^{k_m}(a)=\mu_{n-f}^{k_m}(a)=\mu_n^{-\mu_{n-f}(m)}\mu_{n-f}\mu_n^m(a)=\mu_{n-f}(a)-\mu_{n-f}(m),\] which has a different value for each $m\in\{1,\dots, n-f\}\setminus\{\mu_{n-f}^{-1}(n-f+1), \dots,\mu_{n-f}^{-1}(n)\})$. Therefore, assertion \textit{6.} in Theorem \ref{thr:thethr} provides us with a total of $n-2f$ different equalities and, in particular, $n-2f$ different integers $k_m$. Now, given $a,b\in F_n$, we can combine the two equalities \[\mu_{n-f}(a)-\mu_{n-f}(m)=\mu_{n-f}^{k_m}(a),\] \[\mu_{n-f}(b)-\mu_{n-f}(m)=\mu_{n-f}^{k_m}(b),\] in order to get \[F_{n-f}\ni\mu_{n-f}(a)-\mu_{n-f}(b)=\mu_{n-f}^{k_m}(a)-\mu_{n-f}^{k_m}(b).\] Indeed, we know exactly what are the $n-2f$ different integers $k_m$ which satisfy this equality. Suppose that $k_m\in\Big\{i\Big(\frac{n-f}{f}\Big):1\leq i\leq f\Big\}=F_{n-f}$. Therefore, $\mu_{n-f}^{k_m}(a),\mu_{n-f}^{k_m}(b)\in F_n$ by Lemma \ref{lemma:exp} and $0<|\mu_{n-f}^{k_m}(a)-\mu_{n-f}^{k_m}(b)|<f$, since $F_n=\{n-f+1,\dots,n\}$. However, we can choose indices $a,b\in F_n$ such that $f\leq|\mu_{n-f}(a)-\mu_{n-f}(b)|$, which is a contradiction. If $f$ is even, for example, pick $a,b\in F_n$ such that $\mu_{n-f}(b)=\mu_{n-f}(a)+\tfrac{f}{2}\Big(\tfrac{n-f}{f}\Big)$. If $f$ is odd, pick $a,b\in F_n$ such that $\mu_{n-f}(b)=\mu_{n-f}(a)+\tfrac{f\pm 1}{2}\Big(\tfrac{n-f}{f}\Big)$. Then, $k_m\in\{1,\dots,n-f\}\setminus F_{n-f}$, and this set has exactly $n-2f$ elements. Now, given a quandle of cyclic type of order $n$ with $f$ fixed points such that $n=cf$, where $c>3$, we know $2\in\{1,\dots,n-f\}\setminus F_{n-f}$. Hence, given $a,b\in F_n$, \[\mu_{n-f}(a)-\mu_{n-f}(b)=\mu_{n-f}^2(a)-\mu_{n-f}^2(b).\] However, we also have, by assertion \textit{3.} in Theorem \ref{thr:thethr}, that \[\mu_{\mu_{n-f}(b)-\mu_{n-f}(a)}(\mu_{n-f}(b))=\mu_n^{\mu_{n-f}(b)-\mu_{n-f}(a)}\mu_{n-f}\mu_n^{-(\mu_{n-f}(b)-\mu_{n-f}(a))}(\mu_{n-f}(b))=\]\[=\mu_n^{\mu_{n-f}(b)-\mu_{n-f}(a)}\mu_{n-f}(\mu_{n-f}(a))=\mu_{n-f}^2(a)-\mu_{n-f}(a)+\mu_{n-f}(b)=\mu_{n-f}^2(b)=\mu_{n-f}(\mu_{n-f}(b)),\] implying that the two associate permutations $\mu_{\mu_{n-f}(b)-\mu_{n-f}(a)}$ and $\mu_{n-f}$ (see Proposition \ref{prop:assind} and Corollary \ref{cor:assind}) have the same image at a point that is not a fixed point of these permutations, as $\mu_{n-f}(b)\notin F_{n-f}$. Hence, these permutations must be equal to each other, which is a contradiction, as these permutations are different from each other by Corollary \ref{cor:difper}. Therefore, there are no quandles of cyclic type of order $n$ with $f$ fixed points such that $n=cf$, for $c>3$, and the result follows.
\end{proof}

\subsection{\boldmath Classifying Quandles of Cyclic Type of Order $n$ with $f$ Fixed Points such that $n>2f$}\label{subsect:4}

In this Subsection, we prove Assertion $1.(b)$ in Theorem \ref{thm:main1} and make a few observations regarding $Q_6^2$.

\begin{proof}
(Assertion $1.(b)$ in Theorem \ref{thm:main1}) Immediate from Corollary \ref{cor:order}, Propositions \ref{prop:cyclic1} and \ref{prop:cyclic2} and Corollary \ref{cor:q62}.
\end{proof}

\begin{example}
$Q_6^2$, whose multiplication table is displayed in Table \ref{table:1}, is the only quandle of cyclic type of order $n$ with $f$ fixed points such that $n>2f$, up to isomorphism. In particular, $Q_6^2$ is not a simple quandle (since it admits a non-trivial congruence). Indeed, by Proposition \ref{prop:equivrel}, for $n\geq 2f$, \text{``i is associate to j''} generates an equivalence relation on $Q_6^2$, which is also a congruence relation on this set, as it respects the binary operation of the quandle. In Table \ref{table:2},  we see the quotient of $Q_6^2$ by this congruence relation, which we denote by $\sim$. This quotient is clearly isomorphic to $R_3$, since ``the product'' of any two distinct elements equals the other element.

\begin{center}
\begin{tabular}{|c|c c c|}
 \hline
 \emph{$\overline{\ast}$} & \emph{\{1,3\}} & \emph{\{2,4\}} & \emph{\{5,6\}}\\
 \hline
 \emph{\{1,3\}} & \emph{\{1,3\}} & \emph{\{5,6\}} & \emph{\{2,4\}}\\
 \emph{\{2,4\}} & \emph{\{5,6\}} & \emph{\{2,4\}} & \emph{\{1,3\}}\\
 \emph{\{5,6\}} & \emph{\{2,4\}} & \emph{\{1,3\}} & \emph{\{5,6\}}\\
 \hline
\end{tabular}
\captionof{table}{$Q_6^2/\sim$ multiplication table.}\label{table:2}
\end{center}
\end{example}

\section{Families of Quandles of Cyclic Type of Order $n$ with $f$ Fixed Points in the Range $f+2 \leq n \leq 2f$.}\label{sect:families-range-f+2-n-2f}

\begin{theorem}\label{thm:constructing-quandles}
Let $f$ be an integer strictly greater than $1$ and $n$ a positive integer such that $f+2 \leq n \leq 2f$. Assume further that $(n-f)\, |\, f$. For each $i$ such that $1 \leq i \leq \frac{n}{n-f}$, consider the permutations of $\{ 1, 2, \dots , n   \}$, given by $$ \mu_{(i-1)(n-f)+1}=\mu_{(i-1)(n-f)+1}= \cdots =\mu_{(i-1)(n-f)+1}=(i(n-f)+1\,\,  i(n-f)+2 \,\, \cdots \,\, (i+1)(n-f)) .$$ This sequence of permutations defines a quandle of cyclic type of order $n$ with $f$ fixed points over the set $\{ 1, 2, \dots , n   \}$.
\end{theorem}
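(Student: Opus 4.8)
The plan is to verify directly the two conditions in Theorem~\ref{thm:equivdef}. Set $d := n-f$; since $n\ge f+2$ we have $d\ge 2$, and since $d\mid f$ and $n=f+d$ we get $d\mid n$, so $c := n/d = 1 + f/d$ is an integer, with $c\ge 2$ because $d\le f$ (this uses $n\le 2f$). Partition $\{1,\dots,n\}$ into the $c$ consecutive blocks $B_i := \{(i-1)d+1,\dots,id\}$, $1\le i\le c$, each of size $d$, with block indices read modulo $c$. The prescription in the statement says precisely: for every $j\in B_i$ the permutation $\mu_j$ is the $d$-cycle $\tau_{i+1}$ cyclically permuting $B_{i+1}$ and fixing every other point. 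The structural remark to isolate first is twofold: (i) $\mu_j$ depends only on the block containing $j$; and (ii) for any indices $i,j$ the point $\mu_i(j)$ lies in the same block as $j$ --- indeed, if $j\in B_{i+1}$ then $\mu_i(j)\in B_{i+1}$, and otherwise $\mu_i(j)=j$.

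From this remark both axioms follow quickly. For idempotency: if $j\in B_i$ then $\mu_j=\tau_{i+1}$ fixes everything outside $B_{i+1}$, and $j\in B_i$ with $B_i\ne B_{i+1}$ (here $c\ge 2$ is used), so $\mu_j(j)=j$. For the conjugation identity $\mu_{\mu_i(j)}=\mu_i\mu_j\mu_i^{-1}$: by (ii) the index $\mu_i(j)$ is in the same block as $j$, so by (i) the left-hand side equals $\mu_j$. For the right-hand side, suppose $i\in B_k$ and $j\in B_l$: if $k\equiv l \pmod c$ then $\mu_i=\mu_j$ by (i), so $\mu_i\mu_j\mu_i^{-1}=\mu_j$; if $k\not\equiv l \pmod c$ then $\mu_i=\tau_{k+1}$ and $\mu_j=\tau_{l+1}$ have disjoint supports $B_{k+1}$ and $B_{l+1}$, hence commute, and again $\mu_i\mu_j\mu_i^{-1}=\mu_j$. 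In every case both sides equal $\mu_j$, so Theorem~\ref{thm:equivdef} yields a quandle.

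It then remains only to read off the profile: each $\mu_i$ is a single $d$-cycle together with $n-d=f$ fixed points, so every defining permutation has pattern $\{\,\underbrace{1,\dots,1}_{f},\,n-f\,\}$; the profile is constant of the prescribed form, and $n-f=d\ge 2>1$, so this is a quandle of cyclic type of order $n$ with $f$ fixed points.

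I do not expect a genuine obstacle here. The only point requiring care is the modular bookkeeping of the block indices and the verification that $c\ge 2$, which is what guarantees that consecutive blocks are distinct (hence disjoint); this is exactly what powers both the idempotency check and the ``disjoint supports commute'' step, and once the structural remark (i)--(ii) is stated cleanly, everything else is a one-line consequence.
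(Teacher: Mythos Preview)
Your proof is correct and follows essentially the same approach as the paper's: both verify the conditions of Theorem~\ref{thm:equivdef} by observing that any two of the prescribed permutations are either equal (same block) or have disjoint supports (different blocks), hence commute, so the conjugation identity reduces to $\mu_{\mu_i(j)}=\mu_j$, which holds because $\mu_i(j)$ stays in the block of $j$. Your write-up is in fact more explicit than the paper's --- the paper gives only a terse sketch referring back to the $n=2f$ case in Corollary~\ref{cor:n=2f} --- and your isolation of the structural remark (i)--(ii) and the explicit check that $c\ge 2$ (needed for $B_i\ne B_{i+1}$) are improvements in clarity.
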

\begin{proof} The proof of this Theorem is basically a rearrangement of the argument for the proof of the first statement of Corollary \ref{cor:n=2f}, the existence of a quandle of cyclic type of order $2f$ with $f$ fixed points. We add it here for completeness.

Let $$(i-1)(n-f)+1 \leq j, j' \leq i(n-f) \qquad \text{ and } \qquad (i'-1)(n-f)+1 \leq k \leq i'(n-f) \qquad (i\neq i').$$ Then, $$\mu_j(k) = k+1 \Longrightarrow \mu_{k+1}=\mu_j\mu_k\mu_j^{-1}=\mu_k \qquad \qquad \text{ since $\mu_j$ and $\mu_k$ commute } .$$ Also, $$\mu_j(j') = j' \Longrightarrow \mu_{j'}=\mu_j\mu_{j'}\mu_j^{-1}=\mu_{j'} \qquad \qquad \text{ since $\mu_j$ and $\mu_{j'}$ are equal } .$$ This completes the proof.

\end{proof}

\subsection{Extracting - Adjoining a Common Fixed Point.}\label{sect:ext-adj-common-fixed-point}

\begin{definition}\label{def:common-fixed-point}
Let $Q$ be a quandle of cyclic type with several fixed points. If $g_0 \in Q$ is such that it is a fixed point for any of the permutations of $Q$, $g_0$ is called a {\rm common fixed point of } $Q$.
\end{definition}

\begin{example}
In Table \ref{table:common-fixed-point} we provide the multiplication table of a quandle of cyclic type of order $5$ and $3$ fixed points. The order and number of fixed points of this quandle satisfy $f + 2 \leq n \leq 2f$. Moreover, its permutations are $$ \mu_1 = (1)(2)(3)(4\,\, 5) = \mu_2 = \mu_3 \qquad \qquad   \mu_5 = (1)(4)(5)(2\,\, 3) = \mu_4 .$$ Thus, $1$ is a common fixed point for this quandle.
\end{example}

\begin{center}
\begin{tabular}{|c|c c c c c |}
 \hline
 $\ast$ & 1 & 2 & 3 & 4 & 5 \\
 \hline
 1 & 1 & 1 & 1 & 1 & 1 \\
 2 & 2 & 2 & 2 & 3 & 3 \\
 3 & 3 & 3 & 3 & 2 & 2 \\
 4 & 5 & 5 & 5 & 4 & 4 \\
 5 & 4 & 4 & 4 & 5 & 5 \\
 \hline
\end{tabular}
\captionof{table}{Quandle of cyclic type of order $5$ and $3$ fixed points with a common fixed point: $1$. See \cite{Carter-book}, page $176$.}\label{table:common-fixed-point}
\end{center}

The next two Theorems show us when we can extract a common fixed point (Theorem \ref{thm:extracting-common-fixed-point}) and when we can adjoin a common fixed point (Theorem \ref{thm:adjoining-common-fixed-point}). We repeat their statements here for completeness. Once Theorem \ref{thm:adjoining-common-fixed-point} is proved, combining it with Theorem \ref{thm:constructing-quandles} and iterating the procedure, provides an infinite sequence of quandles of cyclic type with several fixed points within the range $f+2 \leq n \leq 2f$ where $n$ is the order and $f$ the number of fixed points. This is the content of Corollary \ref{cor:an-infinite-sequence-adjoining-common-fixed-points}.

\begin{theorem}\label{thm:extracting-common-fixed-point-bis}
Suppose $f$ is an integer strictly greater than $2$ and $n$ a positive integer such that $f+2 \leq n \leq 2f$. Consider a quandle of cyclic type of order $n$ and $f$ fixed points over the set $Q = \{ 1, 2, \dots , n  \}$ with sequence of permutations $\mu_i$ with $i\in \{  1, 2, \dots , n  \}$. Assume further $g_0 \in Q$ is a common fixed point of $Q$. Then, the set $Q' = Q \setminus \{  g_0 \}$ along with the sequence of permutations $\mu'_i = \mu_i|_{Q'}$ for each $i\in Q'$ defines a quandle of cyclic type of order $n-1$ with $f-1$ fixed points. We call this the extraction of the common fixed point $g_0$.
\end{theorem}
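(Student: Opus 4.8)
The plan is to verify directly that the restricted permutations $\mu'_i = \mu_i|_{Q'}$ are well-defined permutations of $Q'$, that they satisfy the two conditions of Theorem \ref{thm:equivdef} (namely $\mu'_i(i) = i$ and $\mu'_{\mu'_i(j)} = \mu'_i\mu'_j{\mu'_i}^{-1}$), and finally that the resulting profile is the correct one, i.e.\ each $\mu'_i$ splits into one cycle of length $(n-1)-(f-1) = n-f$ and $f-1$ fixed points.

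First I would observe that since $g_0$ is a common fixed point, each $\mu_i$ fixes $g_0$, so $\mu_i$ permutes $Q \setminus \{g_0\} = Q'$; hence $\mu'_i := \mu_i|_{Q'}$ is a genuine permutation of $Q'$, and moreover the cycle decomposition of $\mu'_i$ is obtained from that of $\mu_i$ simply by deleting the fixed point $g_0$. Since $\mu_i$ (for $i \in Q'$) has one cycle of length $n-f > 1$ and $f$ fixed points, one of which is $g_0$, the restriction $\mu'_i$ has that same $(n-f)$-cycle (note $g_0$ cannot lie on it, being fixed) together with $f-1$ fixed points. Because $f \geq 3$, we have $f - 1 \geq 2$, so this is honestly a quandle of cyclic type ``with several fixed points''; and the inequalities $f+2 \leq n \leq 2f$ translate into $(f-1)+2 \leq n-1 \leq 2(f-1)$, so the new parameters stay in the required range. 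Thus the profile statement is immediate once the axioms are checked.

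Next I would check the quandle axioms via Theorem \ref{thm:equivdef}. For $i \in Q'$ we have $\mu'_i(i) = \mu_i(i) = i$ since $i \neq g_0$, giving idempotency. For the conjugation relation, fix $i, j \in Q'$; then $\mu_i(j) \in Q'$ (as $\mu_i$ fixes $g_0$ and is injective, $\mu_i(j) = g_0$ would force $j = g_0$), so $\mu'_{\mu'_i(j)} = \mu'_{\mu_i(j)} = \mu_{\mu_i(j)}|_{Q'} = (\mu_i\mu_j\mu_i^{-1})|_{Q'}$. On the other hand $\mu'_i\mu'_j{\mu'_i}^{-1} = (\mu_i|_{Q'})(\mu_j|_{Q'})(\mu_i|_{Q'})^{-1} = (\mu_i\mu_j\mu_i^{-1})|_{Q'}$, since restricting to the $\mu_i$-invariant subset $Q'$ commutes with composition and inversion. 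Hence the two sides agree, and Theorem \ref{thm:equivdef} yields a quandle structure on $Q'$ determined by $\{\mu'_i : i \in Q'\}$.

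I do not expect a serious obstacle here: the statement is essentially a bookkeeping argument, and the only points needing care are (i) that removing $g_0$ really does not disturb the non-singular cycle of any $\mu_i$ — which is automatic because $g_0$ is fixed by every $\mu_i$ — and (ii) that restriction is compatible with composition and inversion on the invariant set $Q'$, which is a routine fact about permutations. The mild subtlety worth spelling out explicitly is why one needs $f > 2$ rather than merely $f > 1$: with $f = 2$ the extracted quandle would have only $f - 1 = 1$ fixed point per permutation, i.e.\ it would be a ``quandle of cyclic type'' in the classical sense rather than one ``with several fixed points,'' so the hypothesis $f > 2$ is exactly what keeps us inside the class under study and keeps the range inequality $(f-1)+2 \leq n-1$ meaningful.
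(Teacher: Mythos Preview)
Your proposal is correct and follows essentially the same route as the paper: restrict each $\mu_i$ to $Q' = Q \setminus \{g_0\}$ and verify the two conditions of Theorem~\ref{thm:equivdef} directly, using that $Q'$ is $\mu_i$-invariant so restriction commutes with composition and inversion. Your write-up is in fact more complete than the paper's: you explicitly check the profile of each $\mu'_i$ and explain why the hypothesis $f>2$ keeps the new parameters $(n-1,f-1)$ inside the range $f'+2 \le n' \le 2f'$ with $f'>1$, points the paper leaves implicit. Conversely, the paper spends a paragraph observing that $\mu_{g_0}$ commutes with every $\mu_i$ and that the permutations indexed by elements of $C_{g_0}$ all coincide; these remarks are interesting context but are not used in the verification of the quandle axioms for $Q'$, so you lose nothing by omitting them.
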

\begin{proof}
We keep the notation and terminology from the statement. Since $\mu_i(g_0) = g_0$, for each $i\in Q$, then $$\mu_{g_0} = \mu_i\mu_{g_0}\mu_i{-1} \Longleftrightarrow \mu_i\mu_{g_0} = \mu_{g_0}\mu_i ,$$ which amounts to saying that $$\mu_{g_0} = \mu_i^{k_i} \qquad (\text{ for some } i\in Q, \text{ for some } k_i \in \mathbf{Z}) \qquad \qquad \text{ OR } \qquad \qquad C_{g_0} \cap C_i = \emptyset .$$

Assume $$\mu_{g_0} = (g_1\quad g_2\quad \dots \quad g_{n-f}) .$$ Then, $$g_{i+1} = \mu_{g_0}(g_i)  \Longrightarrow \mu_{g_{i+1}} = \mu_{g_0}\mu_{g_i}\mu_{g_0}^{-1} = \mu_{g_i} .$$ So the associate permutations to the permutations corresponding to the elements moved by $\mu_{g_0}$, are all equal to one another.

Consider now the set $Q' = Q \setminus \{  g_0 \}$ along with the sequence of permutations $\mu'_i = \mu_i|_{Q'}$ for each $i\in Q'$. For each $i, j \in Q'$, we have $$\mu_{\mu_i(j)} = \mu_i\mu_j\mu_i^{-1} \Longleftrightarrow \mu'_{{\mu'_i}(j)} = \mu'_i\mu'_j{\mu'_i}^{-1} ,$$ which completes the proof.

\end{proof}

\begin{theorem}\label{thm:adjoining-common-fixed-point-bis}
Let $n$ be an integer greater than $2$. Let  be the underlying set of a quandle whose permutations are denoted $\mu_i$, for each $i\in Q$. Let $g_0\notin Q$ and consider the set $Q' = Q \cup \{  g_0 \}$. Suppose there is a permutation, $\mu$, of the elements of $Q$, such that $\mu\mu_i = \mu_i\mu$, for each $i\in Q$. Then, $Q'$ along with the permutations $$\mu'_i = (g_0)\mu_i \qquad \text{ for each } i\in Q \qquad \qquad \text{ and } \qquad \qquad \mu'_{g_0} = (g_0)\mu$$ is a quandle with a common fixed point, $g_0$.
\end{theorem}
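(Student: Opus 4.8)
The plan is to verify, via Theorem~\ref{thm:equivdef}, that the assignment $i\mapsto\mu'_i$ makes $Q'$ into a quandle, i.e.\ that $\mu'_i(i)=i$ for every $i\in Q'$ and that $\mu'_{\mu'_i(j)}=\mu'_i\,\mu'_j\,(\mu'_i)^{-1}$ for all $i,j\in Q'$. The organizing device will be the observation that ``adjoining $g_0$ as a fixed point'' is an injective group homomorphism $\iota\colon S_Q\hookrightarrow S_{Q'}$, $\sigma\mapsto (g_0)\sigma$, whose image is the set of permutations of $Q'$ fixing $g_0$. By construction $\mu'_i=\iota(\mu_i)$ for $i\in Q$ and $\mu'_{g_0}=\iota(\mu)$, so any product, inverse, or conjugate of the $\mu'$'s again lies in $\iota(S_Q)$ and is computed by applying $\iota$ to the corresponding expression in $S_Q$. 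In particular every $\mu'_i$ fixes $g_0$, which is exactly what it will mean for $g_0$ to be a common fixed point of $Q'$.

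First I would dispose of idempotency: for $i\in Q$ one has $\mu'_i(i)=\mu_i(i)=i$ since $(Q,\mu)$ is a quandle and $i\neq g_0$, while $\mu'_{g_0}(g_0)=g_0$ by the definition of $\iota$. Next I would check the conjugation identities by splitting on whether $i$ or $j$ equals $g_0$. If $i,j\in Q$, then $\mu'_i(j)=\mu_i(j)\in Q$, and applying $\iota$ to the relation $\mu_{\mu_i(j)}=\mu_i\mu_j\mu_i^{-1}$, valid in $(Q,\mu)$, gives $\mu'_{\mu'_i(j)}=\mu'_i\mu'_j(\mu'_i)^{-1}$. If $i\in Q$ and $j=g_0$, then $\mu'_i(g_0)=g_0$, so the left side is $\mu'_{g_0}=\iota(\mu)$ while the right side is $\iota(\mu_i\mu\mu_i^{-1})$; these coincide precisely because $\mu$ commutes with $\mu_i$, which is the hypothesis. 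The case $i=j=g_0$ collapses to idempotency.

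The decisive case is $i=g_0$, $j\in Q$. Here $\mu'_{g_0}(j)=\mu(j)\in Q$, so the left side is $\iota(\mu_{\mu(j)})$ and the right side is $\iota(\mu\mu_j\mu^{-1})=\iota(\mu_j)$ (commutativity again), so the identity to be established is $\mu_{\mu(j)}=\mu_j$ for every $j\in Q$. I expect this to be the main obstacle, because it is not a formal consequence of commutativity alone: evaluating $\mu\mu_j=\mu_j\mu$ at $j$ yields only $\mu(j)=\mu_j(\mu(j))$, so $\mu(j)$ lies in the fixed-point set of $\mu_j$, and the same remark applied to $\mu^{-1}$ shows $j$ lies in the fixed-point set of $\mu_{\mu(j)}$; thus $j$ and $\mu(j)$ are associate indices in the sense of Definition~\ref{def:assocind}, but associate permutations need not be equal (already in $Q_6^2$ one has $\mu_1=\mu_3^{-1}\neq\mu_3$, while the permutation $(1\,3)(2\,4)(5\,6)$ commutes with all six permutations of $Q_6^2$ and sends $1$ to $3$).

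To close the argument I would therefore supplement the hypothesis in one of two ways. Either require in addition that $\mu$ be a quandle automorphism of $(Q,\mu)$, equivalently that $\mu_{\mu(j)}=\mu\mu_j\mu^{-1}$ for all $j$, which combined with commutativity gives $\mu_{\mu(j)}=\mu_j$; or restrict to the situation actually needed in Corollary~\ref{cor:an-infinite-sequence-adjoining-common-fixed-points}, where $\mu=\mu_{i_0}$ for some $i_0\in Q$ and hence $\mu_{\mu(j)}=\mu_{\mu_{i_0}(j)}=\mu_{i_0}\mu_j\mu_{i_0}^{-1}$ by the quandle axiom, which equals $\mu_j$ because in the quandles produced by Theorem~\ref{thm:constructing-quandles} any two permutations commute. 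Once $\mu_{\mu(j)}=\mu_j$ is in hand, every case of Theorem~\ref{thm:equivdef} is satisfied, so $(Q',\mu')$ is a quandle, and since each $\mu'_i$ fixes $g_0$, the point $g_0$ is a common fixed point of it.
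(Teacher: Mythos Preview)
Your analysis is more careful than the paper's own proof and, in fact, uncovers a genuine defect in the statement of Theorem~\ref{thm:adjoining-common-fixed-point-bis}. The paper's proof consists of the two lines
\[
\mu_{\mu_i(i')} = \mu_i\mu_{i'}\mu_i^{-1} \ \Longleftrightarrow\ \mu'_{\mu'_i(i')} = \mu'_i\mu'_{i'}{\mu'_i}^{-1}
\quad\text{and}\quad
\mu\mu_i = \mu_i\mu \ \Longleftrightarrow\ \mu'_{g_0}\mu'_i = \mu'_i\mu'_{g_0},
\]
for $i,i'\in Q$, and declares this to complete the argument. This handles exactly the cases you labelled ``$i,j\in Q$'' and ``$i\in Q$, $j=g_0$'', but silently omits the case ``$i=g_0$, $j\in Q$'', which as you correctly isolate reduces to the requirement $\mu_{\mu(j)}=\mu_j$ for all $j\in Q$. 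Your $Q_6^2$ counterexample with $\mu=(1\,3)(2\,4)(5\,6)$ is valid: this permutation does commute with every $\mu_i$ (it is, up to relabelling, the square of $\mu_6$ composed with the transposition of the associate pair $\{5,6\}$), yet $\mu(1)=3$ and $\mu_1=\mu_3^{-1}\neq\mu_3$. So the construction produces $\mu'_{g_0}(1)=3$ while $\mu'_{g_0}\mu'_1(\mu'_{g_0})^{-1}=\mu'_1\neq\mu'_3$, and the self-distributivity axiom fails. The theorem as stated is therefore false.

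Your two proposed repairs are both sound. Requiring $\mu$ to be a quandle automorphism (so that $\mu_{\mu(j)}=\mu\mu_j\mu^{-1}$, which with commutativity gives $\mu_{\mu(j)}=\mu_j$) is the cleanest general fix; this is the standard ``rack/quandle extension by a central automorphism'' construction. Alternatively, taking $\mu=\mu_{i_0}$ for some $i_0\in Q$ gives the needed identity directly from the quandle axiom $\mu_{\mu_{i_0}(j)}=\mu_{i_0}\mu_j\mu_{i_0}^{-1}$ together with commutativity, and this is precisely what Corollary~\ref{cor:an-infinite-sequence-adjoining-common-fixed-points} actually uses. Either way, your case analysis via the embedding $\iota\colon S_Q\hookrightarrow S_{Q'}$ is the right framework and yields a complete proof once the hypothesis is strengthened.
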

\begin{proof}
For each $i, i'\in Q$, $\mu_{\mu_i(i')} = \mu_i\mu_{i'}\mu_i^{-1} \Longleftrightarrow \mu'_{\mu'_i(i')} = \mu'_i\mu'_{i'}{\mu'_i}^{-1}$ and $\mu\mu_i = \mu_i\mu \Longleftrightarrow \mu'\mu'_i = \mu'_i\mu'$. This completes the proof.
\end{proof}

%\subsection{An instance where adjoining a common fixed point works - over and over again.}

%Let $(Q, \mu)$ be a quandle of cyclic type of order $n$ and $f$ fixed points with $(n-f)\, |\, f$ as in Theorem \label{thm:constructing-quandles}. Then any two permutations are either equal or move points from disjoint sets. So adjoining a common fixed point $g_0$ is accomplished by taking $\mu'_{g_0}=(g_0)\mu_{i_0}$ by picking any $i_0 \in Q$ and for any $j\in Q$, $\mu'_j = (g_0)\mu_j$.

\section{Further Research}\label{sect:further-research}

In this article we looked into the classification of quandles of cyclic type of order $n$ with $f$ fixed points. We realize that these quandles split into three sorts according to the ranges their $(n, f)$'s lie in. If $n>2f$, then these quandles are connected. As a matter of fact, there is only one such quandle which occurs for $n=6$ and $f=2$; it is the octahedron quandle. For each integer $f>2$, there is exactly one such quandle of order $n=2f$ and it is not connected. Finally, in the range $2 < f + 1 < n < 2f$, such quandles are not connected and there seem to be plenty of them.

With the techniques developed in this article, we plan on looking into the classification of other families of quandles like those with constant profile with $f$ fixed points and two non-singular cycles, to begin with. We also plan on taking a fresh look at quandles of cyclic type i.e., when $f=1$.

%We started our article by analyzing quandles of cyclic type with several fixed points. However, we have then restricted our study to a particular family of quandles inside this set, the quandles of cyclic type of order $n$ with $f$ fixed points such that $n>2f$. Therefore, it would be interesting to understand what happens when the order $n$ of a quandle of cyclic type with several fixed points satisfies $f+2\leq n\leq 2f$.\par
%In fact, we have already found some non-isomorphic examples of quandles of cyclic type with several fixed points satisfying this condition for $n\in \{4,5,6\}$. In particular, a complete classification of quandles of cyclic type with several fixed points seems harder than the one we have done in this article.\par
%We plan also to take a fresh look at the quandles of cyclic type i.e., quandles with constant profile whose permutations only have one fixed point besides the non-singular cycle. We also plan to look into quandles with constant profile whose permutations have several fixed points and two non-singular cycles (perhaps of the same length, to begin with).

\end{document}